\newcommand{\R}{\mathbb R}
\newtheorem{theorem}{Theorem}[section]
\newtheorem{cor}[theorem]{Corollary}
\newtheorem{prop}[theorem]{Proposition}
\newtheorem{rems}[theorem]{Remarks}
\newtheorem{rem}[theorem]{Remark}
\numberwithin{equation}{section}
\numberwithin{theorem}{section}
\newcommand{\ve}{\varepsilon}
\newcommand{\rd}{\mathrm{d}}
\begin{document}
\title{Some singular equations modeling MEMS}
\thanks{Partially supported by the French-German PROCOPE project 30718ZG}

\author{Philippe Lauren\c{c}ot}
\address{Institut de Math\'ematiques de Toulouse, UMR~5219, Universit\'e de Toulouse, CNRS \\ F--31062 Toulouse Cedex 9, France}
\email{laurenco@math.univ-toulouse.fr}

\author{Christoph Walker}
\address{Leibniz Universit\"at Hannover, Institut f\" ur Angewandte Mathematik, Welfengarten 1, D--30167 Hannover, Germany}
\email{walker@ifam.uni-hannover.de}

\keywords{Microelectromechanical system - Free boundary problem - Nonlocal nonlinearity - Finite time singularity - Well-posedness - Beam equation - Wave equation}
\subjclass{35Q74 - 35R35 - 35M33 - 35K91 - 35B44 - 35B65}

\date{\today}

\begin{abstract}
In the past fifteen years mathematical models for microelectromechanical systems (MEMS) have been the subject of several studies, in particular due to the interesting qualitative properties they feature. Still most research is devoted to an illustrative, but simplified model which is deduced from a more complex model when the aspect ratio of the device vanishes, the so-called vanishing (or small) aspect ratio model. The analysis of the aforementioned complex model involving a moving boundary has started only recently and an outlook of the results obtained so far in this direction is provided in this survey.
\end{abstract}

\maketitle

%
%
\pagestyle{myheadings}
\markboth{\sc{Ph. Lauren\c cot, Ch. Walker}}{\sc{MEMS models with and without free boundary}}

\tableofcontents
%

\section{Introduction}\label{secint}

Feynman anticipated and popularized the growing need for micro- and nanostructures with his famous lecture entitled ``There is plenty of room at the bottom'' \cite{Fey92} on the occasion of the American Physical Society's annual meeting  in 1959. Less than a decade later, Nathanson et al. \cite{NNWD67} produced in 1967 the resonant gate transistor: the first batch-fabricated {\it microelectromechanical system (MEMS)} converting an input electrical signal into a mechanical force by utilizing electrostatic attraction. Since then, integrated devices on the scale of $1-100 \mu m$, combining mechanical and electrical components, have become a well-established technology for everyday electronic products with applications in various domains, including automotive domain (e.g. accelerometers in airbag deployment systems, fuel or air pressure sensors, rollover detection, headlight leveling), commercial domain (e.g. inertia sensors in smartphones, inkjet printer heads), biotechnology (e.g. sensors for infusion pumps), industrial domain (e.g. optical switches, microvalves, micropumps, capacitors, gyroscopes), and many more \cite{FMCCS05, You11, PeB03, Jaz12, Jaz12b}. With the increasing demand for miniaturized sensors and actuators of sophisticated functionalities, MEMS is regarded as one of the most promising technologies of this century. Needless to say that there are many challenges associated with this advancing miniaturization, both from a technological and theoretical point of view.

\medskip

Electrostatically actuated MEMS devices suffer from an ubiquitous {\it pull-in instability} which limits their effectiveness. This phenomenon has been identified and described already in the seminal work of Nathanson et al. \cite{NNWD67} for the simple situation of a lumped mass and spring system with two parallel plates at different potentials as depicted in Figure~\ref{MassSpring}. 
\begin{figure}[h]
\centering\includegraphics[scale=.45]{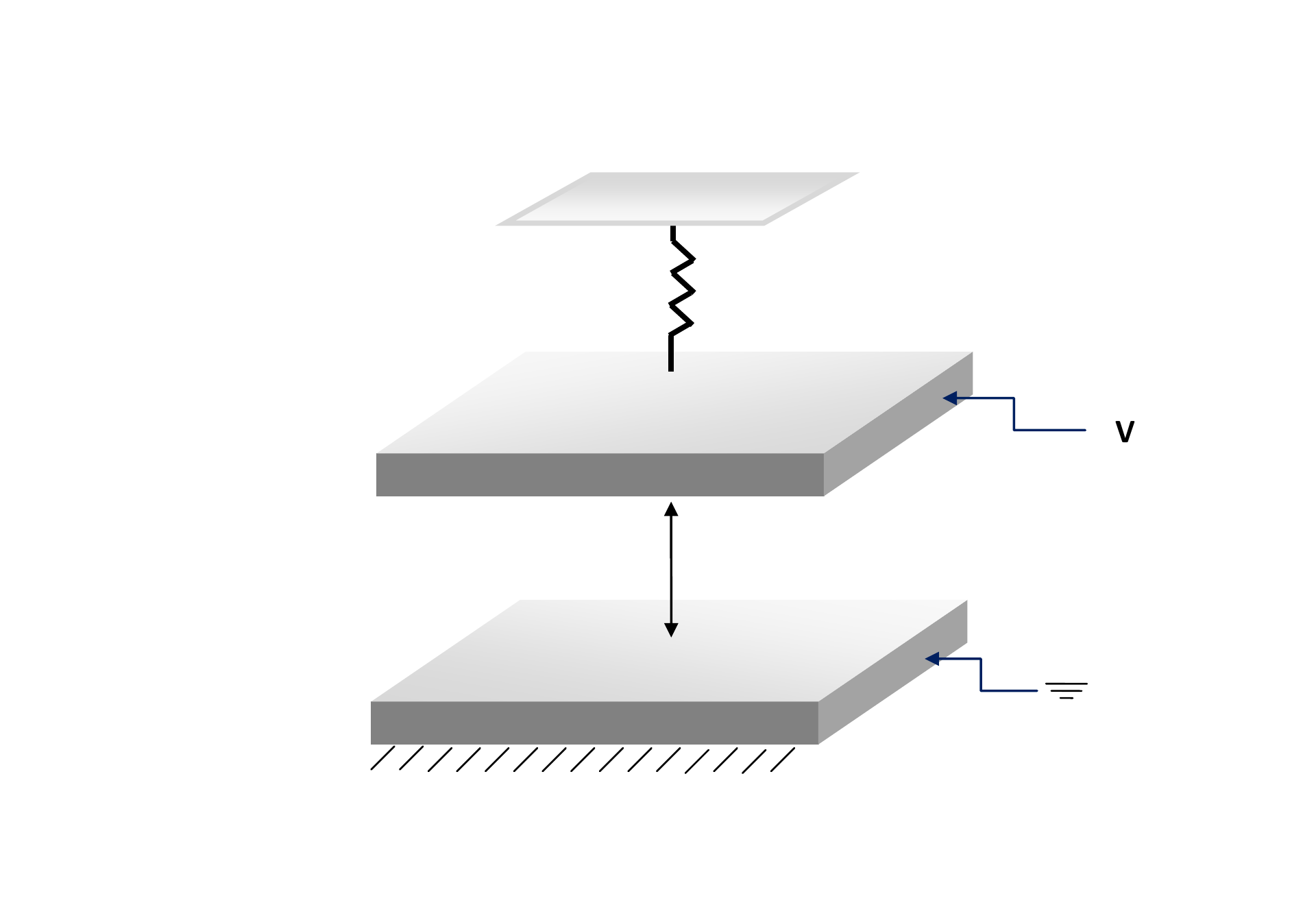}
\caption{\small Schematic drawing of a spring mass system.}\label{MassSpring}
\end{figure}
In this instability, when applied voltages are increased beyond a certain critical value, the top plate ``pulls in'' in the sense that the plates do no longer remain separate, restricting the range of stable operating of the device. Whether or not the \textsl{touchdown} of the upper plate on the lower one is a sought-for effect, key issues are not only to determine the range of applied voltages for which this phenomenon takes place but also the state of the device when it does not (for instance, it might be of interest to optimize the gap between the two plates). For the design of reliable high-performance devices, a detailed understanding of stable operating regimes and the occurrence of pull-in instabilities is thus of utmost importance. 

Even in greatly simplified and idealized physical situations, the governing equations are mathematically challenging. So not only the above mentioned applications are of true interest, but also the mathematical investigation of the equations which involves various tools from many different fields from nonlinear analysis. The focus of this article is on a mathematical model for an idealized electrostatically actuated device in its ``original'' form of a free boundary problem \cite[Section~7.4]{PeB03}. Maybe due to the many mathematical difficulties that come along with it, this free boundary problem was merely the source of simplified models and the research has focused so far mostly on a  version thereof where the ratio height/length of the device is taken to vanish, thereby allowing one to compute the moving domain and to reduce the original free boundary problem to a single partial differential equation. While we only provide a short overview on this simplified model, 
we shall introduce the reader to some recent advances in the mathematical analysis of the free boundary problem.

\subsection{The Free Boundary Problem for an Idealized MEMS}

An idealized electrostatically actuated MEMS device which features the above mentioned instability is schematically drawn in Figure~\ref{MEMSfig}. 
\begin{figure}[h]
\centering\includegraphics[scale=.45]{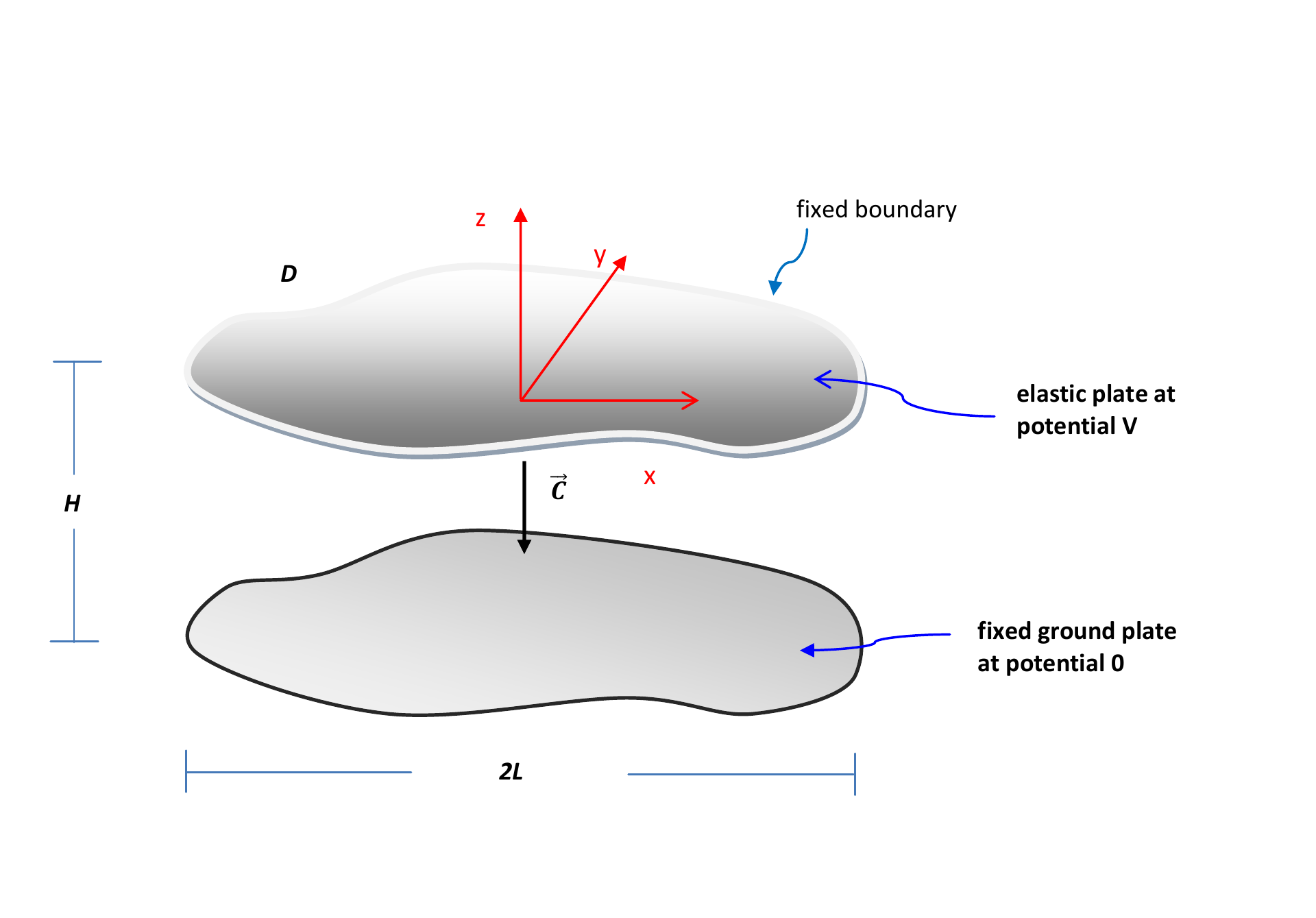}
\caption{\small Schematic diagram of an idealized electrostatic MEMS device.}\label{MEMSfig}
\end{figure}
It consists of a rigid conducting ground plate above which an elastic plate (or membrane), coated with a thin layer of dielectric material and clamped on its boundary, is suspended. Holding the ground plate and the deformable plate at different electric potentials induces a Coulomb force across the device resulting in a deformation of the elastic plate and thus in a change in geometry, while electrostatic energy is converted into mechanical energy. There is thus a competition between attractive electrostatic forces and restoring elastic forces acting on the upper plate. After a suitable rescaling, the elastic plate's position at rest is a two-dimensional domain $D$ located at $z=0$ while the fixed ground plate of the same shape is positioned at $z=-1$. The electrostatic potential on the ground plate is assumed to be zero while it takes a positive value $V>0$ on the elastic plate. The state of the device is fully described by the deformation $u=u(t,x,y)$ of the elastic plate in $z$-direction and the (dimensionless) electrostatic potential $\psi=\psi(t,x,y,z)$ between the plates. Introducing the aspect ratio $\ve$ of the device (i.e. the ratio height/length), the electrostatic potential satisfies the rescaled Laplace equation
\begin{equation}\label{psi}
\varepsilon^2\partial_x^2\psi + \varepsilon^2\partial_y^2\psi +\partial_z^2\psi =0\ ,\quad (x,y,z)\in \Omega(u(t))\ ,\quad t>0\ ,
\end{equation}
in the time-varying cylinder
$$
\Omega(u(t)) := \left\{ (x,y,z)\in D\times (-1,\infty)\ :\ -1 < z < u(t,x,y) \right\}
$$
between the rigid ground plate and the deflected elastic plate. Scaling the potential difference to $V=1$, the boundary conditions for $\psi$ are taken to be
\begin{equation}\label{bcpsi}
\psi(t,x,y,z)=\frac{1+z}{1+u(t,x,y)}\ ,\quad (x,y,z)\in  \partial\Omega(u(t))\, , \quad t>0\ ,
\end{equation}
that is, $\psi=1$ on the elastic plate and $\psi=0$ on the ground plate. The dynamics of the elastic plate deformation is governed by a damped wave/plate equation
\begin{equation}\label{u}
\begin{split}
\gamma^2\partial_t^2 u &+\partial_t u +\beta\Delta^2 u-  \tau \Delta u \\
& =-\lambda\, \left( \ve^2 |\nabla ' \psi(t,x,y,u(t,x,y))|^2 + |\partial_z \psi(t,x,y,u(t,x,y))|^2 \right) 
\end{split}
\end{equation}
for time $t>0$ and position $(x,y)\in D$ with clamped boundary conditions
\begin{equation}\label{bcu}
u=\beta\partial_\nu u=0\ \text{ on }\ \partial D\ , \quad t>0\, ,
\end{equation}
see, e.g., \cite{PeB03, Jaz12, FMCCS05, You11} and Section~\ref{Sec2.1}. In \eqref{u} we use the notation $\nabla ':= (\partial_x,\partial_y)$ for the gradient with respect to the horizontal directions. A commonly used initial state is that the elastic plate is at rest, that is,
\begin{equation}\label{icu}
u(0,x,y)=\gamma\partial_tu(0,x,y)=0\ ,\qquad (x,y)\in D\ .
\end{equation}
The fourth-order term $\beta \Delta^2 u$ in equation \eqref{u} reflects plate bending while the linear second-order term $\tau \Delta u$ with $\tau\ge 0$  accounts for external stretching. Taking $\beta>0$ (with $\tau\ge 0$)  reflects the case of an elastic plate while $\beta=0$ (with $\tau>0$) rather refers to an elastic membrane.
The right-hand side of \eqref{u} is due to  the electrostatic forces exerted on the elastic plate. The parameter $\lambda>0$, being proportional to the square of the applied voltage difference and the device's aspect ratio~$\varepsilon >0$ are key quantities in the model and its analysis. In the sequel we focus on the situation of an elastic plate clamped at its boundary as modeled in \eqref{bcu} though other boundary conditions are possible. For instance, assuming that the domain $D$ is regular enough and denoting the curvature of $\partial D$ (taken to be positive for convex domains) by $\kappa$, the hinged boundary conditions (also referred to as Steklov boundary conditions) 
\begin{equation}\label{bcuSteklov}
u= \beta\Delta u-\beta(1-\sigma)\kappa \partial_\nu u=0 \text{ on }\ \partial D\,, \quad t>0\ ,
\end{equation}
are derived from the description of the deformation of a device which is ideally hinged along all its boundary so that it is free to rotate and cannot support a bending moment \cite{Vil97, GS08}, the parameter $\sigma$ being the Poisson ratio which in general ranges in $[0,1/2)$. A variation hereof, often met in the literature, are the pinned boundary conditions
\begin{equation}\label{bcubcuPinned}
u=\beta\Delta u=0\ \text{ on }\ \partial D\,, \quad t>0\ .
\end{equation}

From a mathematical point of view, the aforementioned pull-in phenomenon occurring when the elastic plate contacts the ground plate corresponds to the deflection $u$ attaining the value $-1$ somewhere in $D$  at some time $t>0$ and is also referred to as the \textit{touchdown} phenomenon. Equations~\eqref{psi}-\eqref{bcpsi} are then no longer meaningful since the set $\Omega(u(t))$ becomes disconnected and solutions cease to exist. Moreover, if $(x_0,y_0)$ denotes a touchdown point in $D$, the vertical derivative $\partial_z\psi(t,x_0,y_0,u(t,x_0,y_0))$ appearing on the right-hand side of \eqref{u} develops a singularity as $\psi=1$ along $z=u(t,x,y)$ while $\psi=0$ along $z=-1$. In some sense, the parameter $\lambda$ tunes this singularity and consequently plays a crucial r\^{o}le in the pull-in instability which is closely related to the existence of stationary (i.e. time-independent) solutions and the question of global existence of solutions to~\eqref{psi}-\eqref{icu}.

The elliptic equation \eqref{psi} is a free boundary problem posed in the non-smooth cylinder $\Omega(u(t))$ which changes its geometry as its upper surface -- that is, the elastic plate being described at instant $t$ by $u(t)$ -- evolves in time. The inherently nonlinear character of this free boundary problem induces another mathematical challenge to the analysis of \eqref{psi}-\eqref{icu} in the form of an intricate coupling of the two unknowns $\psi$ and $u$\,: Any solution $\psi$ to \eqref{psi}-\eqref{bcpsi} and accordingly the right-hand side of the wave equation \eqref{u}  are {\it nonlocal} and nonlinear functions of $u$ and likely to enjoy restricted regularity. It is thus certainly not obvious how the function $\psi$ reacts on changes with respect to $u$. 

In this connection, let us point out that the system \eqref{psi}-\eqref{bcu} has a gradient structure \cite{BeP11, BrP11, BSSDP13, FMCCS05, LW14a}. More precisely, there is an energy functional $\mathcal{E}$ defined for suitable deflections $u$ such that stationary solutions to \eqref{psi}-\eqref{bcu} correspond to critical points of $\mathcal{E}$ while it decays along any trajectory of the evolution problem. However, as we shall see below, the use of this variational structure is delicate due to the non-coercivity of the functional. In fact, the energy $\mathcal{E}(u)$ diverges to $-\infty$ if the minimum of $u$ approaches $-1$. Nonetheless, this structure provides a simple way of deriving the model (Section~\ref{S2}) and is also helpful in excluding the occurrence of some singularities.

\subsection{The Vanishing Aspect Ratio Model for an Idealized MEMS}

The geometrical difficulties in the governing equations can be hidden when the aspect ratio~$\ve$ is negligibly small. Indeed, setting formally $\ve=0$ in \eqref{psi}, we readily infer from \eqref{psi}-\eqref{bcpsi} that the electrostatic potential $\psi_0$ is given by the explicit formula
\begin{equation}\label{psiSG}
\psi_0(t,x,y,z)=\frac{1+z}{1+u_0(t,x,y)}
\end{equation}
as a function of the yet to be determined deflection $u_0$ according to
\begin{equation}\label{uSG}
\begin{split}
\gamma^2\partial_t^2 u_0+\partial_t u_0 +\beta\Delta^2 u_0  - \tau \Delta u_0 = - \frac{\lambda}{(1+u_0)^2}
\end{split}
\end{equation}
for $t>0$, $(x,y)\in D$, supplemented with the boundary condition \eqref{bcu} and initial conditions \eqref{icu}. The original free boundary problem is thus explicitly solved and equation~\eqref{u} is reduced to a single semilinear equation~\eqref{uSG} known as the {\it vanishing aspect ratio} equation. The nonlinear reaction term on its right-hand side is no longer nonlocal but still singular when $u_0=-1$, which corresponds to the touchdown phenomenon already alluded to. This singularity is also referred to as \textit{quenching} in the literature concerning parabolic equations with singular right-hand sides, see \cite{FHQ92, Lev89, Lev92} and the references therein. In addition, from the explicit form of the nonlinearity on the right-hand side of \eqref{uSG} utmost favorable properties can be directly read off, important and well exploited in the investigation of qualitative aspects of solutions: the reaction term of equation~\eqref{uSG} is monotone, concave, and of zero order. These properties are not known and by no means obvious for the equation~\eqref{u} involving the nonlocal reaction term on its right-hand side. Still, the singular equation~\eqref{uSG} depending on the parameter~$\lambda$ presents various mathematical challenges and exhibits many interesting phenomena which is the reason why the vast literature dedicated to vanishing aspect ratio models for MEMS is continuously growing.

\subsection{A Bird's Eye View on MEMS Dynamics}\label{Sec1.3}

As we shall see more precisely later on, the qualitative behavior of solutions to both \eqref{u} and \eqref{uSG} significantly depend on whether or not some of the coefficients $\beta$, $\gamma$, and $\tau$ are assumed to be zero as well as on the shape and the dimension of the plate $D$. In particular, the order of the partial differential equation (fourth-order when $\beta>0$ versus second-order when $\beta=0$) and its  character  (hyperbolic when $\gamma>0$ versus parabolic when $\gamma=0$) play important r\^oles.

As already mentioned a common peculiarity of the dynamics of MEMS is the pull-in instability which is widely observed in real world applications and extensively discussed in engineering literature (see, e.g., \cite{You11, FMCCS05} for references).
Mathematically it manifests in different ways: 

\begin{itemize}

\item[$\star$] In the stationary regime it is (presumably) revealed as a sharp stationary threshold $\lambda_\ve^{stat}>0$ (depending on $\varepsilon$) for the voltage parameter $\lambda$ above which there is no stationary solution while below there is at least one, the exact number of steady-state solutions for a specific value of $\lambda \in (0,\lambda_\ve^{stat}]$ depending heavily on the space dimension. This threshold value of the voltage parameter is often referred to as the (stationary) {\it pull-in voltage} in the literature.

\item[$\star$] For the evolution problem the pull-in instability corresponds to the occurrence of a finite time singularity, the {\it touchdown phenomenon}, in which the elastic plate, initially at rest, collapses onto the ground plate or, equivalently, the plate deflection $u$ reaches the value $-1$. Similarly as in the stationary case, it is expected that a dynamic threshold value $\lambda_{\ve,\gamma}^{dyn}>0$ (depending on $\varepsilon$ and $\gamma$) exists for the dynamical regime such that touchdown takes place when $\lambda>\lambda_{\ve,\gamma}^{dyn}$ while the deflection $u$ stays above $-1$ for all times when $\lambda<\lambda_{\ve,\gamma}^{dyn}$. It is worth pointing out that, as observed in \cite{KLNT15}, this threshold value is likely to depend on the initial deformation of the elastic plate. We emphasize that throughout this paper ``pull-in'' in the dynamic case always refers to a touchdown of the elastic plate when it is initially at rest as in \eqref{icu}.
\end{itemize}

Neither the existence of {\it sharp} pull-in voltages $\lambda_\ve^{stat}$ and $\lambda_{\ve,\gamma}^{dyn}$ nor the relation between them are fully established in all cases, not even for the vanishing aspect ratio model \eqref{uSG} where $\ve=0$. In fact, for the fourth-order case  $\beta>0$ with clamped boundary conditions \eqref{bcu},  the only available result appears to be that $\lambda_ {0,\gamma}^{dyn}\le \lambda_0^{stat}$ as shown in \cite{LW14b, LiL12}. Still when $\ve=0$, more can be said regarding the second-order setting with $\beta=0$ which we describe now, see also Section~\ref{Sec4.2}. It is known \cite{GhG08b} that
\begin{equation}\label{lambda*}
\lambda_0^{stat}=\lambda_{0,0}^{dyn}\,,
\end{equation}
while numerical results \cite{Flo14} and experimental investigations \cite{SDSP11} indicate that
\begin{equation}\label{lambda**}
\lambda_{0,\gamma}^{dyn} < \lambda_0^{stat}\,,\quad \gamma>0\,.
\end{equation} 
This implies that the hyperbolic evolution can drive the device to a touchdown even though there is a stationary configuration. Thus, precise details and the exact values of pull-in voltages  are of utmost importance from the viewpoint of applications.

\medskip

We also point out that the order of the differential equation impacts not only on the structure of the stationary solutions but also on the dynamics of the touchdown behavior, at least for the vanishing aspect ratio model~\eqref{uSG}. Indeed, on the one hand, the fourth-order ($\beta>0$) stationary problem in a one- or two-dimensional domain $D$ is in many respects qualitatively the same as that of the second-order equation ($\beta=0$) in a one-dimensional interval $D$ (see \cite{EGG10} and Section~\ref{Sec2.2}). On the other hand, there is numerical evidence that touchdown occurs in one space dimension at a single point when $\beta=0$ and $\lambda>\lambda_{0}^{stat}=\lambda_{0,0}^{dyn}$, while it takes place at two different points when $\beta>0$ and $\lambda\gg\lambda_{0,0}^{dyn}$ \cite{LiL12}.

\medskip

A similar behavior as observed in the vanishing aspect ratio equation is expected for the free boundary problem  \eqref{psi}-\eqref{icu} with $\ve>0$, that is, that a stationary and dynamical pull-in voltage values $\lambda_\ve^{stat}$ and~$\lambda_{\ve,\gamma}^{dyn}$ exist which may be different. Numerical evidence \cite{FlSxx} implies that the values for the pull-in voltages indeed depend on both $\ve$ and $\gamma$ and, furthermore,  on the hyperbolic or parabolic character since
\begin{equation}\label{lambdafb}
 \lambda_{\ve,\gamma}^{dyn}<\lambda_{\ve,0}^{dyn}= \lambda_\ve^{stat}\ ,\quad \ve>0\,,\quad \gamma> 0\,.
\end{equation} 

\subsection{Outline} In the following, we shall give more details regarding analytical results on the free boundary problem~\eqref{psi}-\eqref{icu}. In Section~\ref{S2} we formally derive the equations as Euler-Lagrange equations of the underlying energy balance within linear (Section~\ref{Sec2.1}) and nonlinear (Section~\ref{Sec2.3}) elasticity theory. We also present the equations in reduced dimensions (Section~\ref{Sec2.2}) --~which will be the main focus of this article~-- and briefly discuss various extensions of the model (Section~\ref{Sec2.4}), which, however, will not be pursued further in this article. In Section~\ref{S3} we collect some useful tools needed for the analysis of~\eqref{psi}-\eqref{icu}. We first investigate the subproblem \eqref{psi}-\eqref{bcpsi} for the electrostatic potential and show how to formulate the free boundary problem as a single equation for the elastic plate deflection (Section~\ref{Sec3.1}). We then discuss the rigorous proof that the free boundary problem has indeed a variational structure (Section~\ref{Sec3.2}) and finally present some results related to the fourth-order maximum principle (Section~\ref{Sec3.3}), which -- when available -- is a central tool for the analysis. Section~\ref{S4} is dedicated to existence and non-existence results for stationary solutions and the related pull-in instability (Section~\ref{Sec4.1}). We also summarize some of the research on stationary solutions to the vanishing aspect ratio model (Section~\ref{Sec4.2}). The evolution problem is considered in Section~\ref{S5} which includes local (Section~\ref{Sec5.1}) and \mbox{(non-)} global existence results (Section~\ref{Sec5.2}) with regard to the pull-in phenomenon. This section also contains a brief discussion on the damping dominated limit (Section~\ref{Sec5.3}) and the vanishing aspect ratio limit (Section~\ref{Sec5.4}). In Section~\ref{S6} we give a short summary on the (original) three-dimensional case. Finally, in Section~\ref{S7} we mention some open problems that could give raise to further investigations.

\section{Derivation of the Governing Equations}\label{S2}

The equations governing the dynamics of the elastic plate in an electrostatically actuated MEMS may be derived from its energy balance after a suitable rescaling as Euler-Lagrange equations \cite{BeP11, BrP11, BSSDP13, FMCCS05, LW14c, PeB03}. We shall first derive the equations presented in the introduction using linear elasticity theory and then introduce variants thereof under different modeling assumptions.

\subsection{Linear Elasticity Model}\label{Sec2.1}

In Figure~\ref{MEMSfig}, an idealized MEMS device is schematically drawn. We assume that the elastic plate, held at potential $V$ and clamped along its boundary, is of shape $\hat D \subset \R^2$ with characteristic dimension $2 L$ and rest position at $\hat z=0$. The grounded fixed plate is also of shape $\hat D$ and located at $\hat z=-H$. 
We denote the vertical deflection\footnote{See e.g. \cite{NYAR05} for a more sophisticated model including non-vertical large deflections based on the von K\'arm\'an equations.} of the elastic plate at position $(\hat{x},\hat y)\in\hat D $ and time $\hat{t}$ by $\hat{u}=\hat{u}(\hat{t},\hat{x},\hat y) >-H$ and the electrostatic potential between the plates at position $(\hat{x},\hat y,\hat{z})$ and time $\hat{t}$ by $\hat{\psi}=\hat{\psi}(\hat{t},\hat{x} ,\hat y ,\hat{z})$. We suppress the time variable $\hat{t}$ for the moment.
The electrostatic potential $\hat{\psi}$ is harmonic in the region
$$
\hat{\Omega}(\hat{u}):=\left\{(\hat{x},\hat y,\hat{z})\,;\, (\hat x,\hat y)\in \hat D\,,\, -H<\hat{z}<\hat{u}(\hat{x},\hat y)\right\}
$$
between the ground plate and the elastic plate, that is,
\begin{equation*}\label{psihat}
\Delta\hat{\psi}=0 \quad\text{in}\quad \hat{\Omega}(\hat{u})
\end{equation*}
and with given values on the plates
\begin{equation*}\label{psihatbc}
\hat{\psi}(\hat{x},\hat y, -H)=0\ ,\quad \hat{\psi}(\hat{x},\hat y,\hat{u}(\hat{x},\hat y))= V\ ,
 \qquad (\hat{x},\hat y)\in\hat D\ ,
\end{equation*}
extended continuously on the vertical sides of $\hat{\Omega}(\hat{u})$ by
\begin{equation*}\label{psihatbcside}
\hat{\psi}(\hat{x},\hat{y},\hat{z}) = V(1+\hat{z})\ , \quad (\hat{x},\hat{y})\in \partial\hat{D}\ .
\end{equation*}
This choice is motivated by formula  \eqref{psiSG} for $\psi$ in the vanishing aspect ratio limit.
The {\it total energy} generated by the deformation of the elastic plate is \cite{BeP11, BrP11, BSSDP13, FMCCS05, PeB03}
$$
\hat{\mathcal{E}}(\hat{u})=\hat{\mathcal{E}}_b(\hat{u})+\hat{\mathcal{E}}_s(\hat{u})+\hat{\mathcal{E}}_e(\hat{u})\ . 
$$ 
It involves the \textit{bending energy}~$\hat{\mathcal{E}}_b$, for simplicity taking into account only the linearized curvature of the elastic plate, that is,
\begin{equation*} 
\hat{\mathcal{E}}_b(\hat{u})=\frac{Ybh^3}{24}\int_{\hat D}  \left\vert\Delta \hat u(\hat{x},\hat y)\right\vert^2\,\rd (\hat{x},\hat y)\ ,
\end{equation*}
where $Y$ is Young's modulus, $h$ is the height of the plate and $b$ its width, the latter being assumed to be of the same order as the plate's length $2L$. The {\it stretching energy} $\hat{\mathcal{E}}_s$ retains the linearized contribution due to the axial tension force with coefficient $T$ together with a self-stretching force coming from elongation of the plate in the case of (moderately) large oscillations. It is of the form \cite{FMCCS05}
$$
\hat{\mathcal{E}}_s(\hat{u})=\frac{T}{2}\int_{\hat D}  \left\vert \nabla\hat u(\hat{x},\hat y)\right\vert^2\, \rd (\hat{x},\hat y)\ +\ \frac{bhY}{8L}\left(\int_{\hat D}  \left\vert \nabla\hat u(\hat{x},\hat y)\right\vert^2\, \rd (\hat{x},\hat y)\right)^2\ .
$$
The {\it electrostatic energy} $\hat{\mathcal{E}}_e$  is given by
$$
\hat{\mathcal{E}}_e(\hat{u})=-\frac{\epsilon_0}{2}\int_{ \hat\Omega(\hat u)}\vert\nabla\hat{\psi}(\hat{x},\hat y,\hat z)\vert^2\,\rd (\hat{x},\hat y,\hat z)
$$
with $\epsilon_0$ being the permittivity of free space. Note that $\hat{\psi}$ is the maximizer of the (negative) Dirichlet energy $-(\epsilon_0/2) \|\nabla\hat{\xi}\|_{L_2(\hat{\Omega}(\hat{u}))}^2$ among functions $\hat{\xi}\in H^1(\hat{\Omega}(\hat{u}))$ satisfying the prescribed boundary conditions on $\partial\hat{\Omega}(\hat{u})$. Introducing dimensionless variables
$$
x=\frac{\hat{x}}{L}\ ,\quad y=\frac{\hat{y}}{L}\ ,\quad z=\frac{\hat{z}}{H}\ ,\quad u=\frac{\hat{u}}{H}\ ,\quad \psi = \frac{\hat{\psi}}{V}\ ,
$$
and denoting  the aspect ratio of the device by $\ve=H/L$, the total energy in these variables is
\begin{equation*}\label{E}
\begin{split}
\mathcal{E}(u)&= \frac{Ybh^3\ve^2}{24}\int_{D}  \left\vert\Delta u(x, y)\right\vert^2\,\rd (x, y)
+ \frac{TL^2\ve^2}{2}\int_{D}  \left\vert \nabla u(x, y)\right\vert^2\, \rd (x, y) \\
&\quad + \frac{bhYL^3\ve^4}{8}\left(\int_{ D}  \left\vert \nabla u(x, y)\right\vert^2\, \rd (x, y)\right)^2\\
&\quad -\frac{\epsilon_0V^2L}{2\ve}\int_{\Omega(u)}\big(\ve^2\vert\partial_{x}\psi(x,y,z)\vert^2+\ve^2\vert\partial_{y}\psi(x,y,z)\vert^2+\vert\partial_z\psi(x,y,z)\vert^2\big)\,\rd (x,y,z)
\end{split}
\end{equation*} 
with $D:=\{ (x,y)\,;\, (Lx,Ly)\in \hat D\}$ and
$$
\Omega(u) := \left\{ (x,y,z)\in D\times (-1,\infty)\ :\ -1 < z < u(x,y) \right\}\,.
$$
The equilibrium configurations of the device are the critical points of the total energy, i.e. satisfy $\partial_u \mathcal{E}(u)=0$. The computation of this Fr\'echet derivative with respect to $u$ is quite involved since $\psi$ solves the rescaled equation
\begin{equation*}\label{psi1}
\varepsilon^2\partial_x^2\psi + \varepsilon^2\partial_y^2\psi +\partial_z^2\psi =0\ ,\quad (x,y,z)\in \Omega(u)\ ,
\end{equation*}
in the $u$-dependent domain $\Omega(u)$ along with the boundary condition 
\begin{equation*}\label{bcpsi1}
\psi(x,y,z)=\frac{1+z}{1+u(x,y)}\ ,\quad (x,y,z)\in  \partial\Omega(u)\,  ,
\end{equation*}
and hence $\psi=\psi_u$ depends in a nonlocal way on the deformation $u$. Interpreting the derivative of the electrostatic energy as the shape derivative of the Dirichlet integral of $\psi=\psi_u$ and using shape optimization arguments \cite{HeP05},  the Euler-Lagrange equation is (see Section~\ref{Sec3.2} and \cite{LW14a})
\begin{equation*}\label{EL}
\begin{split}
0&=
-\ve^2\,\beta \,\Delta^2 u+\ve^2\left(\tau+a\|\nabla u\|_2^2\right)\Delta u\\
&\quad -\ve^2\lambda \big(\ve^2\vert\nabla'\psi(x,y,u(x,y))\vert^2+\vert\partial_z\psi(x,y,u(x,y))\vert^2\big)
\end{split}
\end{equation*}
for $(x,y)\in D$, where $\nabla '=(\partial_x,\partial_y)$ and
$$
\beta:=\frac{Ybh^3}{12}\,,\quad \tau:=TL^2\,,\quad a=a(\ve):=\frac{bh Y L^3 \ve^2}{2}\,, \quad \lambda=\lambda(\ve):=\frac{\epsilon_0V^2 L}{2\ve^3}\ .
$$
The dynamics of the time-dependent deflection \mbox{$u=u(\hat{t},x,y)$} is derived by means of Newton's second law. Letting  $\rho$ and $\delta $ denote the mass density per unit volume of the elastic plate respectively its thickness, the sum over all forces equals  $\rho \delta  \partial_{\hat{t}}^2 u$. The elastic and electrostatic forces are combined with a damping force of the form $-r\partial_{\hat{t}} u$ being linearly proportional to the velocity. Scaling time based on the strength of damping according to $t=\hat{t}\ve^2/r$ and setting \mbox{$\gamma:=\ve\sqrt{\rho \delta }/r$}, we derive for the dimensionless deflection $u$ the evolution equation
\begin{equation*}\label{evol}
\begin{split}
\gamma^2\, \partial_{t}^2 u +\partial_{t} u\, & =\,-\beta \,\Delta^2 u+\left(\tau+a\|\nabla u\|_2^2\right)\Delta u\\
&\quad 
-\lambda \big(\ve^2\vert\nabla '\psi(t,x,y,u(t,x,y))\vert^2+\vert\partial_z\psi(t,x,y,u(t,x,y))\vert^2\big)
\end{split}
\end{equation*}
for $t>0$ and $(x,y)\in D$, which is \eqref{u} when $a=0$, that is, when self-stretching forces due to elongation of the plate are neglected. 

\begin{rem}
An alternative, though formal, way to obtain the electrostatic force exerted on the elastic plate, which avoids the use of shape derivatives, is to treat the device as a parallel plate capacitor \cite{PeB03, Pel01a, EGG10, LiY07} so that classical electrostatics can be applied, see e.g. \cite{Jac62}.
\end{rem}

\subsection{A One-Dimensional Linear Elasticity Model}\label{Sec2.2}

A common building block in MEMS devices are rectangular plates, i.e., after rescaling,
$$
D=(-1,1)\times (-b/L,b/L)\subset\R^2\,,
$$ 
where the elastic plate is only held fixed along the edges in $y$-direction while the edges in $x$-direction are free. This structure is depicted in Figure~\ref{MEMS1d} and yields a one-dimensional equation for the plate deformation when assuming homogeneity in $y$-direction. 
\begin{figure}[h]
\centering\includegraphics[scale=.45]{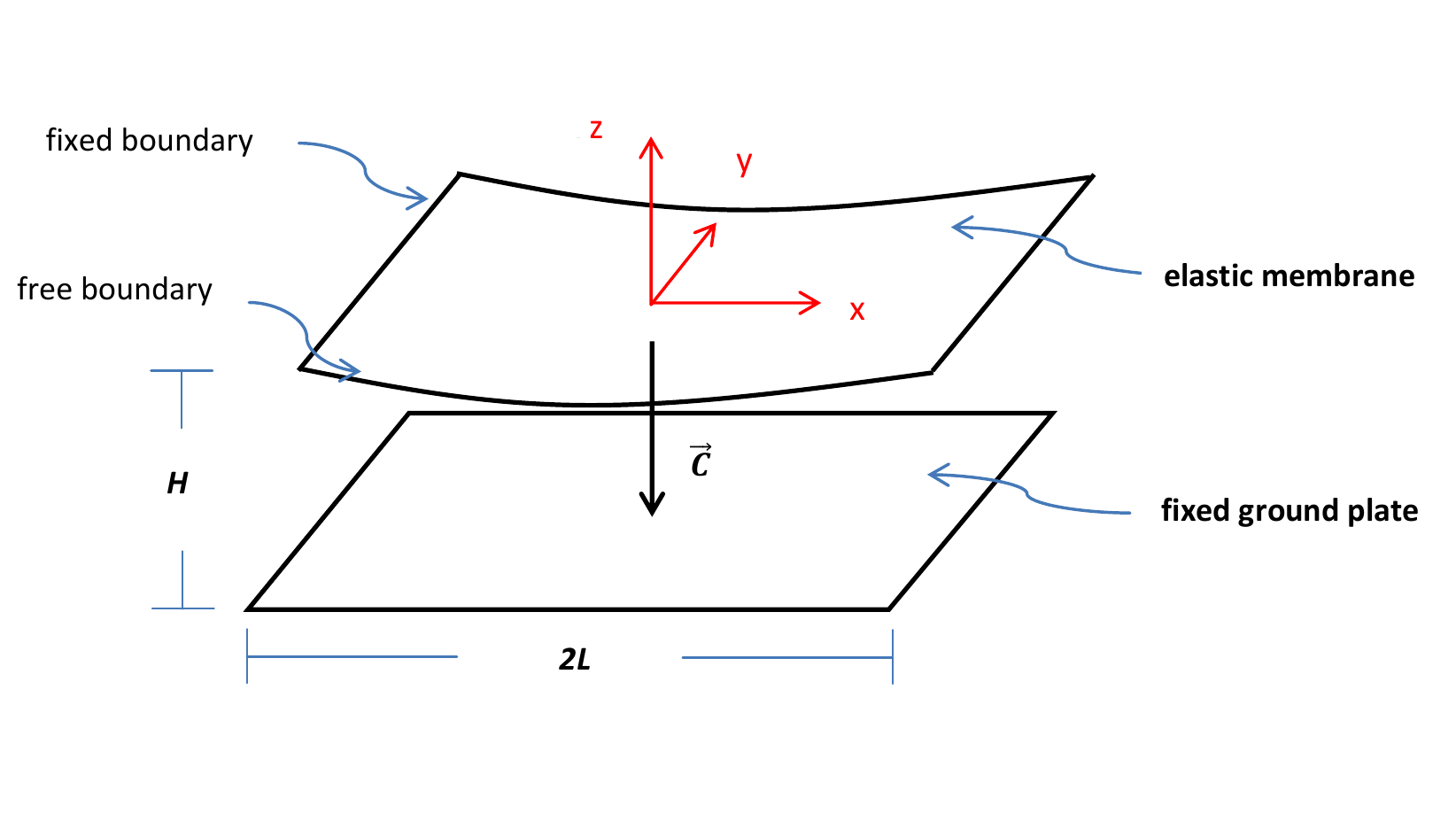}
\caption{\small Schematic diagram of an idealized electrostatic MEMS device of rectangular shape yielding a one-dimensional deformation.}\label{MEMS1d}
\end{figure}
In this case, $u=u(t,x)$ and $\psi=\psi(t,x,z)$ are independent of the variable $y$ so that the equations \eqref{psi}-\eqref{icu} reduce to
\begin{align}
\varepsilon^2\partial_x^2\psi + \partial_z^2\psi &=0\ ,\quad (x,z)\in \Omega(u(t))\ ,\quad t>0\ ,\label{psi1d}\\
\psi(t,x,z)&=\frac{1+z}{1+u(t,x)}\ ,\quad (x,z)\in  \partial\Omega(u(t))\,  ,\label{bcpsi1d}
\end{align}
in the two-dimensional time-varying domain
$$
\Omega(u(t)) := \left\{ (x,z)\in I\times (-1,\infty)\ :\ -1 < z < u(t,x) \right\}
$$
with $I:=(-1,1)$, coupled to the one-dimensional equation
\begin{equation}\label{u1d}
\begin{split}
\gamma^2\partial_t^2 u+\partial_t u +&\beta\partial_x^4 u-  \tau \partial_x^2 u \\
& =-\lambda\, \left(\ve^2|\partial_x\psi(t,x,u(t,x))|^2+ |\partial_z\psi(t,x,u(t,x))|^2\right)
\end{split}
\end{equation}
for $t>0$ and  $x\in I$ in which the self-stretching force has been neglected (i.e. $a=0$), supplemented with initial and boundary conditions
\begin{align}
u(t,\pm 1) &=\beta\partial_x u(t,\pm 1)=0\ , \quad t>0 \ , \label{bcu1d}\\
u(0,x) &=\gamma\partial_tu(0,x)=0\ ,\qquad x\in I\ .\label{icu1d}
\end{align}
As we shall see later, the mathematical analysis in this simplified geometry is already quite involved and we shall restrict in this survey to the particular case of equations~\eqref{psi1d}-\eqref{icu1d}. In Section~\ref{S6} we present extensions for the two-dimensional case~\eqref{psi}-\eqref{icu}. In principle, the same approach can be used in both cases. However, much weaker regularity is obtained in the two-dimensional setting and consequently, the analysis is less complete so far.

\subsection{Nonlinear Elasticity Model}\label{Sec2.3}

The previous linear elasticity model is restricted to small deformations neglecting curvature effects from the outset. The governing equation \eqref{u} (respectively \eqref{u1d}) for the plate deflection is thus semilinear as its highest order term $\beta\Delta^2 u$ is linear. As pointed out in \cite{BrP11} it may well be, however, that retaining gradient terms affects quantitatively and qualitatively the occurrence of the pull-in instability and is thus important in applications. To account for curvature effects in the simpler one-dimensional setting without self-stretching forces (i.e. $a=0$), the bending energy~$\mathcal{E}_b$ is replaced after rescaling by
\begin{equation*} 
\mathcal{E}_b(u)=\frac{Ybh^3\ve^2}{24}\int_{-1}^1  \left\vert\partial_{x}\left(\frac{\partial_{x}u({x})}{\sqrt{1+\ve^2(\partial_x {u}({x}))^2}}\right)\right\vert^2\, \sqrt{1+\ve^2(\partial_{x}u(x))^2}\,\rd x
\end{equation*}
involving the curvature of the graph of $u$ and its arc length element, while the stretching energy~$\mathcal{E}_s$ is proportional to the change of arc length and replaced by
\begin{equation*} 
\begin{split}
\mathcal{E}_s({u})&=TL^2\ve^2\int_{-1}^1  \left(\sqrt{1+\ve^2(\partial_{{x}}{u}(x))^2}-1\right)\, \rd x\,. 
\end{split}
\end{equation*}
Analogously as before, the Euler-Lagrange equation of the total energy gives rise to the governing equation for the plate deflection. Consequently, taking curvature effects into account, equation \eqref{u1d} is replaced by
\begin{equation}\label{uquasilin}
\begin{split}
\gamma^2 \partial_t^2 u + \partial_t u+ \mathcal{K}(u)= - \lambda \left( \varepsilon^2 \vert \partial_{x} \psi(t,x,u(t,x)) \vert^2 + \vert\partial_z\psi(t,x,u(t,x)) \vert^2\right)
\end{split}
\end{equation}
with the quasilinear fourth-order operator 
\begin{equation*}\label{AA}
\begin{split}
\mathcal{K}(u):=&\beta\,  \partial_{x}^2\left(\frac{\partial_{x}^2u}{(1+\varepsilon^2(\partial_{x}u)^2)^{5/2}}\right) +  \frac{5}{2}\,\varepsilon^2\,\beta\, \partial_{x}\left(\frac{\partial_xu(\partial_{x}^2u)^2}{(1+\varepsilon^2(\partial_{x}u)^2)^{7/2}}\right)\\
&
-\tau\, \partial_{x}\left(\frac{\partial_{x}u}{(1+\varepsilon^2(\partial_{x}u)^2)^{1/2}}\right)\,.
\end{split}
\end{equation*}
The resulting problem \eqref{psi1d}, \eqref{bcpsi1d}, \eqref{uquasilin}, \eqref{bcu1d}, \eqref{icu1d} is studied in \cite{LW14c} and in the second-order case (i.e. $\beta=0$) in \cite{ELW15,ELW13}.

\subsection{Other Extensions}\label{Sec2.4}

There are various variants and extensions of the governing equations for a MEMS device in order to incorporate additional physical effects. Except for \cite{EsL16, Lie15, Liexx}, where the case of a non-uniform potential applied along the elastic plate is studied, and \cite{Koh13, Koh15a, Koh15b}, where a model involving two elastic plates suspended one above each other is considered, the analytical investigations regarding extended models have been concerned so far almost exclusively with vanishing aspect ratio models like \eqref{uSG}. For instance, spatial variations in the dielectric properties of the elastic plate which modify the strength of the Coulomb force may be included \cite{Pel01a} or the device may be embedded in a circuit in an attempt to control the pull-in instability \cite{PeT01}. If a simple capacitive control scheme is used, the right-hand side of equation~\eqref{uSG} is then replaced by a term
\begin{equation}
-\,  \frac{\lambda\, f(x,y)}{(1+u_0(x,y))^2 \left( \displaystyle{1+\chi\int_D\frac{\rd (x,y)}{1+u_0(x,y)}} \right)}\,,
\label{sccs}
\end{equation}
where $\chi$ is a ratio of capacitances in the system and $1/f(x,y)$ is proportional to the local dielectric permittivity. Observe that, when $\chi>0$, the additional nonlocal term does not depend on the spatial variables and may in some way be included in the parameter~$\lambda$. 

Actually, the common vanishing aspect ratio approximation leading to \eqref{uSG} is not uniformly valid in the spatial variables as fringing fields -- unaccounted for in the model (also with free boundary) -- may cause corrections in regions close to the boundary of the domain. Corner-corrected models are derived in \cite{Pel01b, PeB03, PeD05} and the right-hand side of equation~\eqref{uSG} is replaced by 
\begin{equation}
-  \frac{\lambda\, \left( 1+\delta\vert\nabla u_0(x,y)\vert^2 \right)}{(1+u_0(x,y))^2}\,, \quad \delta>0\ . \label{frfi}
\end{equation}

On a nanoscale, when the device is below a critical size, van der Waals and Casimir forces may become important which represent attraction of two uncharged bodies and may lead to a spontaneous collapse of the system with zero applied voltage. Such a phenomenon is considered in \cite{BPS08, GuZ04} for instance and leads to the addition of a term of the form 
\begin{equation}
- \frac{\mu}{(1+u_0)^{m}}\ , \qquad \mu>0\ , \label{vdWCa}
\end{equation} 
on the right-hand side of \eqref{uSG} with $m=3$ (van der Waals forces) or $m=4$ (Casimir forces).

Somewhat in the opposite direction, the addition of a term $\lambda\eta^{m-2} (1+u_0)^{-m}$, $m>2$, on the right-hand side of \eqref{uSG} is suggested in \cite{LLG14, LLG15, Lindsay16} with small parameter $\eta>0$ mimicking the effect of a thin insulating layer coating the ground plate and thereby preventing the touchdown phenomenon.

Finally, if a MEMS device is not fabricated to operate in a vacuumed capsule, the dynamics of the moving plate can be affected by the gas (or liquid) between the plates. This squeeze film phenomenon is taken into account by adding a pressure term on the right-hand side of \eqref{uSG} which itself is governed by the nonlinear Reynolds equation and depends on the deflection $u_0$.  We refer to \cite{FMCCS05, NaY04, BaY07, NYAR05, Jaz12} for a detailed description of the model which has been investigated only numerically so far.

\section{Tool Box}\label{S3}

A promising solving strategy for \eqref{psi1d}-\eqref{icu1d} is a decoupling of the equations for the electrostatic potential $\psi$ and the plate deflection $u$. Focusing first on the subproblem  \eqref{psi1d}-\eqref{bcpsi1d} for the electrostatic potential with an (almost) arbitrarily given, but fixed deflection $u$ and investigating the dependence of its solution $\psi=\psi_u$ on $u$,  is the basis to define
\begin{equation}\label{gg}
g_\ve(u)(x):= \ve^2|\partial_x\psi_u(x,u(x))|^2+ |\partial_z\psi_u(x,u(x))|^2\,,\quad x\in I=(-1,1)\,,
\end{equation}
which also reads
\begin{equation}\label{ggg}
g_\ve(u)(x) = \left( 1 + \ve^2 |\partial_x u(x)|^2 \right) |\partial_z\psi_u(x,u(x))|^2 \,,\quad x\in I\,,
\end{equation}
owing to the constant Dirichlet boundary condition \eqref{bcpsi1d}. The time dependence is omitted in \eqref{gg} and \eqref{ggg} since time acts only as a parameter in the elliptic subproblem \eqref{psi1d}-\eqref{bcpsi1d}. One can then reformulate \eqref{psi1d}-\eqref{icu1d} as a single nonlocal problem  
\begin{align}
\gamma^2\partial_t^2 u+\partial_t u +\beta\partial_x^4 u- &\tau \partial_x^2 u
 =-\lambda\, g_\ve(u)\,,\qquad t>0\,,\quad x\in I\,, \notag\\
u(t,\pm1)&=\beta\partial_x u(t,\pm1)=0\ , \quad t>0\ ,\, \label{u1dd}\\
u(0,x)&= \gamma\partial_tu(0,x)=0\ ,\qquad x\in I\ ,\notag
\end{align}
only involving the plate deflection $u$. In the following we shall collect some auxiliary results needed for this approach. In Section~\ref{Sec3.1} we first provide the reduction to~\eqref{u1dd} and state important properties of the function $g_\ve$. In Section~\ref{Sec3.2} we then set the stage for a variational formulation of~\eqref{u1dd}. Existence of solutions to~\eqref{u1dd} and their qualitative aspects shall be studied later in Section~\ref{S4} and Section~\ref{S5}.

\subsection{The Electrostatic Potential}\label{Sec3.1}

For an arbitrarily fixed deflection $u$, the subproblem \eqref{psi1d}-\eqref{bcpsi1d} is an elliptic equation with Dirichlet boundary data. Thus  only minimal regularity assumptions on $u$ -- including its continuity along with the constraint $u(\pm 1)=0$ guaranteeing an open domain $\Omega(u)$ -- are needed in order to ensure the existence of a unique weak solution $\psi_u$ to \eqref{psi1d}-\eqref{bcpsi1d} in the Sobolev space $H^1(\Omega(u))$ by the well-known Lax-Milgram Theorem.
However, the provided regularity for $\psi_u$ is far from being sufficient to give a meaning to the square of its gradient trace $\vert\nabla \psi_u(\cdot,u)\vert^2$ needed later to define $g_\ve(u)$ in \eqref{gg}. Moreover, no information is so obtained on the dependence of this term on the given deflection $u$. For this reason, a beneficial approach is to restate the Laplace equation \eqref{psi1d}-\eqref{bcpsi1d} for $\psi_u$ as a boundary value problem with zero Dirichlet data of the form
\begin{equation}\label{PLPhi}
-\mathcal{L}_u \Phi_u=f_u \ \text{ in }\ \Omega\,,\quad \Phi_u=0\ \text{ on }\ \partial\Omega
\end{equation}
for the transformed electrostatic potential
\begin{equation}\label{PLphi}
\Phi_u(x,\eta) := \psi_u\big( x, (1+u(x))\eta - 1 \big) -\eta \ , \quad (x,\eta)\in\Omega\, ,
\end{equation}
in the fixed rectangle $\Omega:=I\times (0,1)$.
The differential operator $-\mathcal{L}_u$ has coefficients depending on $u$ and its $x$-deri\-va\-tives up to second-order and being singular at touchdown points $x\in I$ where \mbox{$u(x)=-1$}. However, as long as $u(x)>-1$  for all $x\in I$, the operator $-\mathcal{L}_u$ is elliptic. We thus restrict to deflections $u$ belonging to
$$
S_q^\alpha(\kappa):=\left\{v\in W_{q,D}^\alpha(I)\,;\,  \| v\|_{W_q^\alpha(I)}<\kappa^{-1}\,,\, v>-1+\kappa\right\}
$$
for some $\alpha>1/q$ and $\kappa\in (0,1)$, where $q\in [1,\infty]$.  Here 
\begin{align*}
W_{q,D}^\alpha(I) := \left\{
\begin{array}{ccl}
W_{q}^\alpha(I) & \text{ if } & \alpha\le 1/q\ , \\
& & \\
\{ v\in W_{q}^\alpha(I)\, ;\, v(\pm 1)=0 \} & \text{ if } & 1/q< \alpha\le (q+1)/q\ , \\
& & \\
\{ v\in W_{q}^\alpha(I)\, ;\, v(\pm 1)=\beta \partial_x v(\pm 1) = 0 \} & \text{ if } & (q+1)/q< \alpha\ .
\end{array}
\right.
\end{align*} 
We stress here that, when $\beta=0$ and $\alpha>1/q$, the first-order boundary conditions are irrelevant and $W_{q,D}^\alpha(I)$ is simply the subspace of the classical Sobolev space $W_{q}^\alpha(I)$ consisting of those functions $v$ satisfying only $v(\pm 1)=0$. Observe that if $\alpha>1/q$, then $W_q^\alpha(I)$ is continuously embedded in $C(\bar{I})$ so that $\Omega(u)$ is a well-defined open set of $\mathbb{R}^2$. For simplicity we put  
$$
S_q^\alpha:= \mathop{\bigcup}_{\kappa\in (0,1)} S_q^\alpha(\kappa)\ , \quad q\in [1,\infty]\ , \quad \alpha>\frac{1}{q}\ .
$$
Depending on the smoothness of the deflection $u$, we derive different regularity properties of the electrostatic potential~$\psi_u$. Introducing the Sobolev spaces
$$
H_D^s(\Omega) := \left\{ \phi\in H^s(\Omega)\, ;\, \phi=0 \;\text{ on }\; \partial\Omega \right\}\ , \quad s>\frac{1}{2}\ ,
$$
we can use elliptic regularity theory to show that, for sufficiently smooth deflections $u$, the operator $-\mathcal{L}_u$ is bounded and invertible from $H_D^2(\Omega)$ onto $L_2(\Omega)$ so that
 the transformed potential $\Phi_u$ solving \eqref{PLPhi} belongs to $H_D^2(\Omega)$ and depends smoothly on $u$. For less regular deflections $u$, the operator  $-\mathcal{L}_u$ no longer maps $H_D^2(\Omega)$ into $L_2(\Omega)$. Still it is a bounded and invertible operator from $H_D^1(\Omega)$ onto $(H_D^{1}(\Omega))'$. A careful analysis reveals that its inverse maps $L_2(\Omega)$ into the anisotropic space 
$$
X(\Omega):=\{\theta\in H_D^1(\Omega)\,;\, \partial_\eta\theta\in H^1(\Omega)\}\,,
$$
and $\Phi_u\in X(\Omega)$ is weakly sequentially continuous with respect to $u$. 
The so-derived key properties in terms of the electrostatic potential $\psi_u$ are collected in the following result.

\begin{prop} [Nonlocal Nonlinearity \hspace{-.1mm}\cite{ELW14,LW_Multi}] \label{P1}
\begin{enumerate}
\item[(i)] For each $u\in S_2^1$ there is a unique weak solution $\psi_u \in H_D^{1}(\Omega(u))$ to \eqref{psi1d}-\eqref{bcpsi1d}.
\item[(ii)] Assume that $\beta>0$ and consider $\alpha\in (3/2,2]$. If $u\in S_2^{\alpha}$, then $\psi_u \in H^{\alpha}(\Omega(u))$, and the mapping
$$
g_\ve: S_2^{\alpha}\rightarrow H^\sigma(I)\ ,\quad u\mapsto \varepsilon^2 |\partial_x \psi_u(\cdot,u)|^2 + |\partial_z \psi_u(\cdot,u)|^2
$$
is continuous for each $\sigma\in [0,1/2)$.
\item[(iii)] Let $\kappa\in (0,1)$, $q\in (2,\infty]$, and $\sigma\in [0,1/2)$. Then 
$g_\ve:S_q^{2}(\kappa)\rightarrow H^\sigma(I)$ is globally Lipschitz continuous, bounded, and analytic.
\end{enumerate}
\end{prop}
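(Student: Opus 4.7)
My overall approach is to reduce both existence and regularity to a fixed-domain Dirichlet problem via the flattening change of variable $(x,z) \mapsto (x, (z+1)/(1+u(x)))$ already encoded in \eqref{PLphi}. A direct chain-rule computation recasts \eqref{psi1d}--\eqref{bcpsi1d} as the problem \eqref{PLPhi} on $\Omega = I\times(0,1)$, where $-\mathcal{L}_u$ is a second-order divergence-form elliptic operator with coefficients polynomial in $u$ and $\partial_x u$ divided by positive powers of $1+u$, and $f_u$ collects analogous terms together with $\partial_x^2 u$. The lower bound $u \ge -1+\kappa$ built into $S_q^\alpha(\kappa)$ supplies uniform ellipticity of $\mathcal{L}_u$ and controls all singularities, while the decisive advantage over working on $\Omega(u)$ is that $\Omega$ is a fixed convex polygon, so standard elliptic regularity theory becomes available.

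For part (i) the regularity of $u$ is too low to flatten comfortably, so I work directly on $\Omega(u)$. Given $u \in S_2^1 \subset H_0^1(I)$, the affine lift $\chi_u(x,z) := (1+z)/(1+u(x))$ is a bounded $H^1(\Omega(u))$-extension of the Dirichlet data in \eqref{bcpsi1d} with $\partial_z^2 \chi_u \equiv 0$ and $\partial_x \chi_u \in L_2(\Omega(u))$, since $\partial_x u \in L_2(I)$ and $(1+u)^{-1} \in L_\infty(I)$. I then seek $v := \psi_u - \chi_u$ in $H_0^1(\Omega(u))$ solving $-\varepsilon^2 \partial_x^2 v - \partial_z^2 v = \varepsilon^2 \partial_x^2 \chi_u$, whose right-hand side lies in $H^{-1}(\Omega(u))$; the bilinear form $(v,w)\mapsto \int_{\Omega(u)} (\varepsilon^2 \partial_x v\,\partial_x w + \partial_z v\,\partial_z w)\,\mathrm{d}(x,z)$ is continuous and coercive on $H_0^1(\Omega(u))$, so Lax--Milgram yields unique weak existence, and $\psi_u = v + \chi_u \in H^1(\Omega(u))$ satisfies \eqref{bcpsi1d} in the trace sense.

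For part (ii) I pass to the flattened problem. When $u \in S_2^\alpha$ with $\alpha \in (3/2,2]$, the Sobolev embedding $H^\alpha(I) \hookrightarrow C^1(\bar I)$ places $\partial_x u \in L_\infty(I)$, so the coefficients of $\mathcal{L}_u$ are bounded and lie in appropriate fractional Sobolev spaces on $\Omega$, and $f_u \in L_2(\Omega)$. Convexity of $\Omega$ and the purely Dirichlet boundary condition upgrade the Lax--Milgram solution $\Phi_u \in H_D^1(\Omega)$ first to $H^2(\Omega)$ and then, through an interpolation/bootstrap argument keyed to the fractional regularity of the coefficients, to $\Phi_u \in H^\alpha(\Omega)$; pulling back gives $\psi_u \in H^\alpha(\Omega(u))$. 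For $g_\varepsilon$ the crucial observation, stemming from \eqref{ggg} and the chain rule applied to \eqref{PLphi}, is the identity
\begin{equation*}
g_\varepsilon(u)(x) = \big(1 + \varepsilon^2 (\partial_x u(x))^2\big)\, \frac{\big(1 + \partial_\eta \Phi_u(x,1)\big)^2}{(1+u(x))^2}\ ,
\end{equation*}
which reduces continuity of $g_\varepsilon$ into $H^\sigma(I)$ to continuity of the trace $u \mapsto \partial_\eta \Phi_u(\cdot,1)$ combined with standard multiplicative estimates in fractional Sobolev spaces that make pointwise products and squaring continuous on $H^\sigma(I)$ for $\sigma < 1/2$.

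For part (iii) the stronger regularity $u \in W^2_{q,D}(I)$ with $q>2$ puts $u$ in the Banach algebra $C^1(\bar I)$, so $u \mapsto \mathcal{L}_u$ decomposes as polynomial operations in $u$ and $\partial_x u$ composed with the analytic map $w \mapsto (1+w)^{-1}$ on the open set $\{w > -1+\kappa\}$, producing an analytic map $S_q^2(\kappa) \to \mathcal{L}(H_D^2(\Omega), L_2(\Omega))$. Since $\mathcal{L}_u$ is a continuous linear isomorphism by part (ii), $u \mapsto \mathcal{L}_u^{-1}$ is analytic by a Neumann series argument, so $u \mapsto \Phi_u = -\mathcal{L}_u^{-1} f_u$ is analytic into $H_D^2(\Omega)$; composing with the continuous trace operator and the analytic squaring and product maps on $H^\sigma(I)$ delivers analyticity of $g_\varepsilon$. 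The uniform bounds $u \ge -1+\kappa$ and $\|u\|_{W^2_q(I)} < \kappa^{-1}$ make every constant uniform over $S_q^2(\kappa)$, yielding boundedness and global Lipschitz continuity. The main obstacle I anticipate is the elliptic regularity step in part (ii): one must track the precise regularity of the coefficients of $\mathcal{L}_u$ in the rough setting $\alpha \in (3/2,2)$ carefully enough to recover $H^\alpha(\Omega)$ up to the corners of the rectangle, and to validate the trace argument for $g_\varepsilon$ throughout the full range $\sigma \in [0,1/2)$ claimed in the statement.
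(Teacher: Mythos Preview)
Your treatment of parts~(i) and~(iii) matches the paper's approach and is fine: Lax--Milgram on $\Omega(u)$ for~(i), and for~(iii) the analyticity of $u\mapsto \mathcal{L}_u$ into $\mathcal{L}(H_D^2(\Omega),L_2(\Omega))$ combined with invertibility and Neumann series, exactly as the paper indicates by invoking classical elliptic regularity with smooth coefficients.

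The gap is in part~(ii). Your argument rests on the claims that $f_u\in L_2(\Omega)$ and that the coefficients of $\mathcal{L}_u$ are regular enough for standard elliptic theory to deliver $\Phi_u\in H^2(\Omega)$ (and then $H^\alpha(\Omega)$ by interpolation). But the flattening transform produces, after the chain rule, a contribution of the form $\varepsilon^2\,\eta\,\partial_x^2 u/(1+u)$ in $f_u$ and analogous second-derivative terms in the non-divergence coefficients of $\mathcal{L}_u$. For $u\in H^\alpha(I)$ with $\alpha\in(3/2,2)$ one has $\partial_x^2 u\in H^{\alpha-2}(I)$ with $\alpha-2<0$, so neither $f_u\in L_2(\Omega)$ nor $\mathcal{L}_u:H_D^2(\Omega)\to L_2(\Omega)$ holds; the paper states this explicitly. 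Consequently your bootstrap from $H^1$ to $H^2$ to $H^\alpha$ does not get off the ground in the genuinely fractional range, and the obstacle you flag at the end is not merely a matter of tracking constants more carefully but a structural failure of the isotropic approach.

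The paper's remedy is different in kind: one shows that $-\mathcal{L}_u$, viewed as an isomorphism $H_D^1(\Omega)\to (H_D^1(\Omega))'$, has an inverse that maps $L_2(\Omega)$ into the \emph{anisotropic} space $X(\Omega)=\{\theta\in H_D^1(\Omega):\partial_\eta\theta\in H^1(\Omega)\}$, and then exploits the specific structure of $f_u$ together with the clamped boundary condition $\partial_x u(\pm1)=0$ (this is precisely why $\beta>0$ is required in~(ii)) to place $\Phi_u$ in $X(\Omega)$. The anisotropic regularity $\partial_\eta\Phi_u\in H^1(\Omega)$ is exactly what is needed to control the trace $\partial_\eta\Phi_u(\cdot,1)\in H^{1/2}(I)$ entering your formula for $g_\varepsilon(u)$, and it is obtained without ever asserting $\Phi_u\in H^2(\Omega)$. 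The continuity of $u\mapsto\Phi_u$ established this way is moreover only weak sequential continuity, which is then upgraded; your sketch assumes strong continuity from the outset.
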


The proofs of the statements~(ii) and~(iii) in Proposition~\ref{P1} are of a completely different nature. While (iii) is a consequence of the regularity theory for elliptic equations with sufficiently smooth coefficients \cite{LU68}, the proof of (ii) is more involved and requires to exploit carefully the specific structure of the right-hand side $f_u$ of \eqref{PLPhi} and the vanishing of $\partial_x u(\pm 1)$ to derive estimates in $X(\Omega)$ for $\Phi_u$. In particular, it is restricted to the fourth-order case $\beta>0$. Note that $g_\ve$ acts in statement (ii) as a mapping of (roughly) order one while in (iii) it is of order $3/2$, but with significant additional properties.

The particular geometry of $\Omega(u)$ with its corners apparently precludes a higher regularity than $H^2(\Omega(u))$ for the electrostatic potential $\psi_u$. This is a far-reaching technical obstruction for the analysis. 
Also, as pointed out in the introduction, convexity and monotonicity properties of the electrostatic potential  or of the function~$g_\ve$ --~obvious and important for the study of the vanishing ratio equation~\eqref{uSG}~-- are not known in the present situation. 

\subsection{The Electrostatic Energy}\label{Sec3.2}

A useful tool in the investigation of qualitative aspects for solutions to equation~\eqref{u1dd} is its gradient flow structure. In Section~\ref{S2} we have seen that (the stationary) equation~\eqref{u1dd} is {\it formally} the Euler-Lagrange equation of the (rescaled) total energy
\begin{equation}\label{EE}
\mathcal{E}(u):=\mathcal{E}_m(u)-\lambda \mathcal{E}_e(u)
\end{equation}
involving the mechanical energy 
\begin{equation*}
\mathcal{E}_m(u) := \frac{\beta}{2} \|\partial_x^2 u \|_{2}^2 + \frac{\tau}{2} \|\partial_x u \|_{2}^2 
\end{equation*}
and the electrostatic energy 
\begin{equation*}
\mathcal{E}_e(u) := \int_{\Omega(u)} \left( \varepsilon^2 |\partial_x  \psi_u|^2 + |\partial_z  \psi_u|^2 \right)\ \mathrm{d}(x,z)\, ,
\end{equation*}
where $\psi_u$ is the electrostatic potential associated with a given deflection $u$ and provided by Proposition~\ref{P1}. 
The stumbling block in this direction is obviously the computation of the Fr\'echet derivative with respect to $u$ of the elastic energy $\mathcal{E}_e$, which involves the implicit dependence of $\psi_u$  on $u$ via the domain $\Omega(u)$. Nevertheless, this difficulty can be overcome by interpreting the derivative as the shape derivative of the Dirichlet integral of $\psi_u$, which can be computed by shape optimization arguments~\cite{HeP05} and a transformation to the fixed rectangle~$\Omega$ introduced in the previous section. More precisely, the rigorous result is the following.

\begin{prop} [Derivative of the Electrostatic Energy \hspace{-.1mm}\cite{LW_Multi}] \label{P2}
 The functional $\mathcal{E}_e$ belongs to $C(S_2^1)$, and if  $\beta>0$ and $\alpha\in (3/2,2]$, then $\mathcal{E}_e\in C^1(S_2^{\alpha})$  with Fr\'echet derivative $\partial_u \mathcal{E}_e(u) = -g_\ve(u)$ for $u\in S_2^{\alpha}$.
\end{prop}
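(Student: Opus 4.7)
The plan is to bring everything back to the fixed rectangle $\Omega=I\times(0,1)$ by means of the transformation $\Phi_u$ from \eqref{PLphi}. A direct chain-rule computation with $T_u(x,\eta):=(x,(1+u(x))\eta-1)$ shows that $\partial_z\psi_u\circ T_u=(1+\partial_\eta\Phi_u)/(1+u)$ and $\partial_x\psi_u\circ T_u=\partial_x\Phi_u-\eta\,\partial_x u\,(1+\partial_\eta\Phi_u)/(1+u)$, so that after the change of variables $z=(1+u(x))\eta-1$ the energy reads
\[
\mathcal{E}_e(u)=\widetilde{\mathcal{E}}(u,\Phi_u),
\]
where $\widetilde{\mathcal{E}}(u,\Phi)$ is an integral over $\Omega$ depending smoothly on $(u,\partial_x u)$ and quadratically on $(1+\partial_\eta\Phi,\partial_x\Phi)$. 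The crucial structural fact is that, for each admissible $u$, the transformed potential $\Phi_u$ is the unique minimizer of $\Phi\mapsto\widetilde{\mathcal{E}}(u,\Phi)$ over $H^1_D(\Omega)$, because $\psi_u$ minimizes the $\ve$-weighted Dirichlet integral on $\Omega(u)$ among admissible boundary values.

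To obtain continuity of $\mathcal{E}_e$ on $S_2^1$, I would combine the weak sequential continuity of $u\mapsto\Phi_u$ recalled in Section~\ref{Sec3.1} with the minimizing property above: along a sequence $u_n\to u$ in $W_2^1$ with $\{u_n,u\}\subset S_2^1(\kappa)$, comparing $\widetilde{\mathcal{E}}(u_n,\Phi_{u_n})$ with $\widetilde{\mathcal{E}}(u_n,\Phi_u)$ and using uniform coercivity on $S_2^1(\kappa)$ upgrades the weak convergence of $\Phi_{u_n}$ to strong convergence in $H^1_D(\Omega)$. Continuity of $\mathcal{E}_e$ is then obtained by passing to the limit in the integrand defining $\widetilde{\mathcal{E}}$.

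For the Fr\'echet differentiability on $S_2^\alpha$ with $\alpha\in(3/2,2]$ and $\beta>0$, Proposition~\ref{P1}(ii) together with the elliptic theory for $\mathcal{L}_u$ implies that $u\mapsto\Phi_u$ is a $C^1$ map into an appropriate subspace of $H^\alpha(\Omega)$. Applying the chain rule to $\mathcal{E}_e(u)=\widetilde{\mathcal{E}}(u,\Phi_u)$, the partial derivative in the second slot vanishes, since the tangent vector $D\Phi_u\cdot v$ lies in $H^1_D(\Omega)$ (the boundary condition on $\Phi$ is independent of $u$) and $\Phi_u$ is a critical point of $\widetilde{\mathcal{E}}(u,\cdot)$. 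There remains $\partial_u\mathcal{E}_e(u)[v]=\partial_1\widetilde{\mathcal{E}}(u,\Phi_u)[v]$, which I would identify with $-\int_I g_\ve(u)\,v\,\rd x$ by returning to the original formulation on $\Omega(u)$ and invoking Hadamard's shape-derivative formula for the family $u_s=u+sv$. Using the equation $-\ve^2\partial_x^2\psi_u-\partial_z^2\psi_u=0$, the material-derivative relation $\dot\psi_u=-v\,\partial_z\psi_u$ on $\{z=u(x)\}$ (obtained by differentiating $\psi_{u_s}(\cdot,u_s)\equiv 1$ in $s$), and the tangential identity $\partial_x\psi_u=-\partial_x u\,\partial_z\psi_u$ on $\{z=u(x)\}$ (obtained by differentiating $\psi_u(x,u(x))=1$ in $x$), the resulting expression collapses to $-\int_I(1+\ve^2(\partial_x u)^2)\,|\partial_z\psi_u(\cdot,u)|^2\,v\,\rd x$, which equals $-\int_I g_\ve(u)\,v\,\rd x$ by \eqref{ggg}.

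The main technical obstruction is the limited regularity of $\psi_u$: the identification with $-g_\ve(u)$ requires both an integration by parts and the evaluation of traces of $\partial_x\psi_u$, $\partial_z\psi_u$ on the non-smooth upper boundary $\{z=u(x)\}$. The hypotheses $\beta>0$ and $\alpha>3/2$ are not cosmetic but are precisely what Proposition~\ref{P1}(ii) demands in order to guarantee $\psi_u\in H^\alpha(\Omega(u))$, and thus the existence of a well-defined trace $\partial_z\psi_u(\cdot,u)\in H^{\alpha-3/2}(I)\hookrightarrow L_2(I)$, which is what makes the identity $\partial_u\mathcal{E}_e(u)=-g_\ve(u)$ meaningful.
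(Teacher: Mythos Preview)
Your proposal is correct and follows essentially the same route the paper indicates: transformation to the fixed rectangle $\Omega$ via $\Phi_u$, exploitation of the variational characterization of $\Phi_u$ to kill the partial derivative in the second slot, and identification of the remaining piece by a Hadamard-type shape derivative computation. One minor point: Proposition~\ref{P1}(ii) only asserts continuity of $g_\ve$, not $C^1$-dependence of $u\mapsto\Phi_u$, so the cleanest way to make your chain-rule step rigorous is the envelope argument (sandwich $\mathcal{E}_e(u+sv)-\mathcal{E}_e(u)$ between $\widetilde{\mathcal{E}}(u+sv,\Phi_u)-\widetilde{\mathcal{E}}(u,\Phi_u)$ and $\widetilde{\mathcal{E}}(u+sv,\Phi_{u+sv})-\widetilde{\mathcal{E}}(u,\Phi_{u+sv})$ using the minimality of $\Phi_u$ and $\Phi_{u+sv}$), which only needs continuity of $u\mapsto\Phi_u$ --- precisely what is available here.
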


According to Proposition~\ref{P2}, the Fr\'echet derivative of $\mathcal{E}_e$ in $S_2^2$ is non-negative when $\beta>0$, which indicates a monotonicity property of $\mathcal{E}_e$. The latter is actually valid in a more general setting and is somewhat reminiscent of that of the classical Dirichlet energy \cite{HeP05}.
 
\begin{prop}[Monotonicity of the Electrostatic Energy \hspace{-.1mm}\cite{LW_Multi}]\label{P3}
Consider two functions $u_1$ and $u_2$ in $S_2^1$ such that $u_1\le u_2$. Then $\mathcal{E}_e(u_2)\le \mathcal{E}_e(u_1)$. 
\end{prop}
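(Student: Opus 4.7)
The plan is to invoke Dirichlet's principle: for each $u \in S_2^1$, the potential $\psi_u$ is the unique minimizer of the anisotropic Dirichlet functional
\[
\mathcal{D}_u(\phi) := \int_{\Omega(u)} \bigl( \varepsilon^2 |\partial_x \phi|^2 + |\partial_z \phi|^2 \bigr)\, \mathrm{d}(x,z)
\]
over the affine subset of $H^1(\Omega(u))$ whose trace on $\partial\Omega(u)$ equals $(1+z)/(1+u(x))$, so that $\mathcal{E}_e(u) = \mathcal{D}_u(\psi_u)$ is this minimum. To compare $\mathcal{E}_e(u_2)$ with $\mathcal{E}_e(u_1)$, I will exhibit a competitor $\bar\psi$ admissible for the variational problem on the larger domain $\Omega(u_2)$ whose Dirichlet energy equals exactly $\mathcal{E}_e(u_1)$; the desired inequality then follows at once.

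Since $u_1 \le u_2$ and $u_1(\pm 1) = u_2(\pm 1) = 0$ (both functions lying in $S_2^1 \subset W_{2,D}^1(I)$), one has $\Omega(u_1) \subseteq \Omega(u_2)$, and I would set
\[
\bar\psi(x,z) :=
\begin{cases}
\psi_{u_1}(x,z), & (x,z) \in \Omega(u_1), \\[1mm]
1, & (x,z) \in \Omega(u_2) \setminus \overline{\Omega(u_1)}.
\end{cases}
\]
Two verifications are then needed. First, that $\bar\psi \in H^1(\Omega(u_2))$: each piece is manifestly in $H^1$ of its domain, and the two traces match across the interface $\{z = u_1(x)\}$ because $\psi_{u_1}$ equals $(1+u_1)/(1+u_1) = 1$ on the top boundary of $\Omega(u_1)$ by \eqref{bcpsi1d}. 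Second, that $\bar\psi$ carries the correct boundary trace on $\partial\Omega(u_2)$: on $\{z=u_2(x)\}$ it equals $1 = (1+u_2)/(1+u_2)$; on $\{z=-1\}$ one sits inside $\Omega(u_1)$ so $\bar\psi = \psi_{u_1}(x,-1) = 0$; and on the lateral sides the condition $u_1(\pm 1) = u_2(\pm 1) = 0$ makes the two domains coincide in a neighbourhood of $x = \pm 1$, yielding $\bar\psi(\pm 1,z) = \psi_{u_1}(\pm 1,z) = 1+z = (1+z)/(1+u_2(\pm 1))$.

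To conclude, since $\bar\psi$ is constant on $\Omega(u_2)\setminus\overline{\Omega(u_1)}$ its weak gradient vanishes there, so $\mathcal{D}_{u_2}(\bar\psi) = \mathcal{D}_{u_1}(\psi_{u_1}) = \mathcal{E}_e(u_1)$, and Dirichlet's principle applied to $\psi_{u_2}$ gives $\mathcal{E}_e(u_2) = \mathcal{D}_{u_2}(\psi_{u_2}) \le \mathcal{D}_{u_2}(\bar\psi) = \mathcal{E}_e(u_1)$. The main delicate point I anticipate is the justification of the $H^1$-gluing across the graph $\{z = u_1(x)\}$, which is only continuous for $u_1 \in W_2^1(I)$; this however falls under the standard criterion that a piecewise $H^1$ function with matching traces along an interior interface belongs to $H^1$ of the union. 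Note that the shape-derivative identity $\partial_u \mathcal{E}_e = -g_\varepsilon \le 0$ from Proposition~\ref{P2} would yield the same monotonicity, but only under the stronger hypotheses $\beta>0$ and $u \in S_2^\alpha$ with $\alpha > 3/2$, whereas the Dirichlet-principle argument sketched here works at the minimal regularity $S_2^1$ required by the statement.
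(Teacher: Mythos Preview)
Your argument via the Dirichlet principle and extension of $\psi_{u_1}$ by the constant $1$ is correct and is exactly the approach the paper hints at with its remark that the result is ``reminiscent of that of the classical Dirichlet energy'' (the survey itself gives no proof, referring to \cite{LW_Multi}). The only point deserving a word of caution is your appeal to the ``standard criterion'' for $H^1$-gluing across $\{z=u_1(x)\}$: that criterion is usually stated for Lipschitz interfaces, whereas the graph of $u_1\in W_2^1(I)$ is merely H\"older continuous. A robust way to handle both the gluing and the admissibility of $\bar\psi$ in one stroke is to write $\bar\psi = w + h$, where $h(x,z) := \min\bigl((1+z)/(1+u_1(x)),\,1\bigr)$ belongs to $H^1(\Omega(u_2))$ as the truncation of an $H^1$ function, and $w$ is the extension by zero to $\Omega(u_2)$ of $\psi_{u_1}-(1+z)/(1+u_1)\in H_0^1(\Omega(u_1))$, which lies in $H_0^1(\Omega(u_2))$ regardless of the regularity of $\partial\Omega(u_1)$; this immediately gives $\bar\psi\in H^1(\Omega(u_2))$ with $\nabla\bar\psi=(\nabla\psi_{u_1})\mathbf{1}_{\Omega(u_1)}$ a.e., and $\bar\psi-b_{u_2}=w+(h-b_{u_2})$ with $h-b_{u_2}\in H^1\cap C(\overline{\Omega(u_2)})$ vanishing on $\partial\Omega(u_2)$.
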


Unfortunately, a monotonicity (or concavity) property of the derivative of $\mathcal{E}_e$, that is, of the right-hand side $-g_\ve$ of the evolution equation \eqref{u1dd}, is not known. This is in clear contrast to the vanishing aspect ratio equation \eqref{uSG}, where such a monotonicity is obvious and of utmost importance in the analysis. We shall see later in Section~\ref{S4} and Section~\ref{S5}, however, how the information provided by Proposition~\ref{P2} and Proposition~\ref{P3} on the variational structure of \eqref{u1dd} can be used to gain insight into qualitative aspects. Still, some care is needed if one wishes to exploit this variational structure. Indeed, it is quite clear that $\mathcal{E}_e$ is unbounded from below on $S_2^1$ and  quantified by the estimates
\begin{equation}\label{MarcusMiller}
\int_{-1}^1 \frac{\mathrm{d}x}{1+u(x)} \le \mathcal{E}_e(u) \le \int_{-1}^1 \frac{ 1 + \varepsilon^2 |\partial_x u(x)|^2}{1+u(x)}\,\mathrm{d}x\,,\quad u\in S_2^1\,  .
\end{equation}
To prove the first inequality in \eqref{MarcusMiller} it suffices to note that \eqref{bcpsi1d} and the Cauchy-Schwarz inequality imply
\begin{align*}
\int_{-1}^1\frac{1}{1+u(x)}\, \rd x& = \int_{-1}^1\frac{\left( \psi_u(x,u(x)) - \psi_u(x,-1) \right)^2}{1+u(x)}\,\rd x\\
& = \int_{-1}^1\frac{1}{1+u(x)} \left( \int_{-1}^{u(x)} \partial_z \psi_u(x,z)\ \mathrm{d}z \right)^2 \rd x \\
& \le \int_{-1}^1\int_{-1}^{u(x)} \left( \partial_z \psi_u(x,z) \right)^2\ \mathrm{d}z\, \rd x\ . 
\end{align*}
The second one is a consequence of the variational inequality satisfied by the weak solution $\psi_u$ to \eqref{psi1d}-\eqref{bcpsi1d} (see \cite[Lemma~2.8]{LW_Multi} for details). Clearly, $\mathcal{E}_e(u)$ is not finite for any deflection $u\in C^1(\bar{I})$ satisfying $u\ge -1$ and $u(x_0)=-1$ for some $x_0\in I$.  It is worth pointing out that the vanishing aspect ratio $\ve=0$ leads to equality on both sides in \eqref{MarcusMiller} and thus, the bounds on $\mathcal{E}_e$ when $\ve >0$ seem to be nearly optimal. 
That $\mathcal{E}_e$ features a singularity for certain touchdown deflections becomes also apparent from the striking identity \cite{LW_Multi}
\begin{equation}
\mathcal{E}_e(u) =  2 - \int_{-1}^1 u(x) \left( 1 + \varepsilon^2 |\partial_x u(x)|^2 \right)\, \partial_z \psi_u(x,u(x))\, \mathrm{d}x\,, \label{b19}
\end{equation} 
which is derived from \eqref{psi}-\eqref{bcpsi} by means of Green's formula and indicates that the blowup of $\mathcal{E}_e(u)$ resulting from the touchdown phenomenon is accompanied by a singular behavior of $x\mapsto \partial_z \psi_u(x,u(x))$ as already mentioned.

\subsection{The Total Energy}\label{Sec3.25}

Proposition~\ref{P2} implies that the stationary solutions to \eqref{u1dd} are the critical points of the energy $\mathcal{E} = \mathcal{E}_m - \lambda \mathcal{E}_e$ defined in \eqref{EE} and that \eqref{u1dd} may be seen as a gradient flow associated with $\mathcal{E}$. In particular, $\mathcal{E}$ is decreasing along sufficiently smooth solutions to the dynamic problem \eqref{u1dd}.

\begin{cor} [Total Energy \hspace{-.1mm}\cite{LW14a}] \label{C1}
\begin{itemize}
\item[(i)] Let $u\in S_2^4$. Then $u$ is a stationary solution to \eqref{u1dd} if and only if it is a critical point of the total energy $\mathcal{E}$.
\item[(ii)] Let $u\in C([0,T], S_2^{2+\nu})\cap C^1([0,T],H_D^\nu (I))$ solve \eqref{u1dd} for some $\nu>0$ and $T>0$. Then
\begin{equation}
\mathcal{E}(u(t)) + \frac{\gamma^2}{2}\| \partial_t u(t)\|_{2}^2+\int_0^t \|\partial_t u(s)\|_{2}^2\ \mathrm{d}s = \mathcal{E}(0)\label{rc2b}
\end{equation}
for $t\in [0,T]$.
\end{itemize}
\end{cor}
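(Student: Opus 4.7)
The plan is to derive (i) directly from Proposition~\ref{P2} by computing the Fréchet derivative of $\mathcal{E}_m$ through integration by parts, and to obtain (ii) by testing the evolution equation in \eqref{u1dd} against $\partial_t u$, identifying each resulting term as a perfect derivative in time of a component of the total energy.

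For (i), fix $u\in S_2^4$ and a perturbation $\phi$ in the tangent space (so $\phi(\pm 1)=\beta\partial_x\phi(\pm 1)=0$). Two integrations by parts give
\[
\partial_u \mathcal{E}_m(u)[\phi]=\beta\int_I \partial_x^2 u\,\partial_x^2\phi\,\mathrm{d}x+\tau\int_I \partial_x u\,\partial_x\phi\,\mathrm{d}x=\int_I\bigl(\beta\partial_x^4 u-\tau\partial_x^2 u\bigr)\phi\,\mathrm{d}x,
\]
the boundary terms vanishing thanks to the clamped conditions. Combining this with $\partial_u\mathcal{E}_e(u)=-g_\ve(u)$ from Proposition~\ref{P2} (applicable since $S_2^4\subset S_2^\alpha$ for any $\alpha\in(3/2,2]$), one sees that $\partial_u\mathcal{E}(u)=0$ in the dual of $H_D^2(I)\cap H^4(I)$ if and only if $\beta\partial_x^4 u-\tau\partial_x^2 u+\lambda g_\ve(u)=0$ in $L_2(I)$, which is precisely the stationary form of \eqref{u1dd}.

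For (ii), test \eqref{u1dd} against $\partial_t u$ and integrate over $I$. The inertial and damping contributions become $\tfrac{\gamma^2}{2}\tfrac{\mathrm{d}}{\mathrm{d}t}\|\partial_t u\|_2^2$ and $\|\partial_t u\|_2^2$ respectively. For the linear elastic part, the regularity $u(t)\in S_2^{2+\nu}$ combined with the clamped boundary conditions on both $u(t)$ and $\partial_t u(t)$ allows the duality identity
\[
\int_I\bigl(\beta\partial_x^4 u-\tau\partial_x^2 u\bigr)\partial_t u\,\mathrm{d}x=\frac{\mathrm{d}}{\mathrm{d}t}\mathcal{E}_m(u(t)),
\]
interpreted via the pairing between $H^\nu$ and $H^{-\nu}$ on the second-order derivatives. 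For the nonlocal right-hand side, the $C^1$-regularity of $\mathcal{E}_e$ on $S_2^\alpha$ established in Proposition~\ref{P2} combined with the chain rule yields
\[
-\lambda\int_I g_\ve(u(t))\,\partial_t u(t)\,\mathrm{d}x=\lambda\,\langle\partial_u\mathcal{E}_e(u(t)),\partial_t u(t)\rangle=\lambda\,\frac{\mathrm{d}}{\mathrm{d}t}\mathcal{E}_e(u(t)).
\]
Summing all terms produces
\[
\frac{\mathrm{d}}{\mathrm{d}t}\left(\mathcal{E}(u(t))+\frac{\gamma^2}{2}\|\partial_t u(t)\|_2^2\right)=-\|\partial_t u(t)\|_2^2,
\]
and \eqref{rc2b} follows by integrating from $0$ to $t$ and invoking the initial condition $\partial_t u(0)=0$, $u(0)=0$ from \eqref{icu1d}.

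The main technical obstacle is the justification of the chain rule for $t\mapsto\mathcal{E}_e(u(t))$ in the prescribed regularity class. Proposition~\ref{P2} only ensures $\mathcal{E}_e\in C^1(S_2^\alpha)$ for $\alpha\in(3/2,2]$; one must verify that $t\mapsto u(t)$, continuous into $S_2^{2+\nu}\hookrightarrow S_2^\alpha$ and differentiable into $H_D^\nu(I)$, allows one to pass the derivative through $\mathcal{E}_e$. Since $g_\ve(u(t))\in H^\sigma(I)\subset L_2(I)$ (Proposition~\ref{P1}) and $\partial_t u(t)\in H_D^\nu(I)\subset L_2(I)$, the pairing $\langle g_\ve(u),\partial_t u\rangle$ reduces to an $L_2$-inner product, and a standard approximation argument (or a direct computation using the Lipschitz continuity from Proposition~\ref{P1}(iii)) upgrades the directional Fréchet derivative to the total time derivative. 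Once this step is in place, all remaining computations reduce to Green's formula on the fixed interval $I$.
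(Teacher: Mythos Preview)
The paper is a survey and does not include a proof of Corollary~\ref{C1}; it simply records the result with a reference to \cite{LW14a}. Your argument is the natural one and is essentially what one finds in the cited source: part~(i) follows at once from Proposition~\ref{P2} and integration by parts, while part~(ii) is obtained by multiplying \eqref{u1dd} by $\partial_t u$, integrating over $I$, and recognizing each term as a total time derivative.

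Your identification of the only nontrivial step---the chain rule for $t\mapsto\mathcal{E}_e(u(t))$ when $\partial_t u(t)$ lies merely in $H_D^\nu(I)$ rather than in the domain $S_2^\alpha$ on which $\mathcal{E}_e$ is known to be $C^1$---is exactly right, and your proposed resolution via the mean-value representation
\[
\mathcal{E}_e(u(t+h))-\mathcal{E}_e(u(t)) = -\int_0^1 \int_I g_\ve\big(u(t)+s(u(t+h)-u(t))\big)\,(u(t+h)-u(t))\,\mathrm{d}x\,\mathrm{d}s
\]
combined with the continuity of $g_\ve$ in Proposition~\ref{P1}~(ii) and the convergence of the difference quotient in $L_2(I)$ is precisely how this is handled. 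One small remark: your phrase ``pairing between $H^\nu$ and $H^{-\nu}$'' for the mechanical part is slightly imprecise, since $\partial_x^2\partial_t u\in H^{\nu-2}$ rather than $H^{-\nu}$; the clean justification of $\tfrac{\mathrm{d}}{\mathrm{d}t}\|\partial_x^2 u\|_2^2$ again goes through the same mean-value/difference-quotient argument (or, equivalently, the Lions--Magenes interpolation lemma applied after using the equation itself to upgrade the regularity of $A_\alpha u$). This is routine and does not affect the validity of your proof.
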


Retrieving tractable information from this variational structure is, however, not obvious since the total energy $\mathcal{E}$ is clearly the sum of terms with different signs and the possible pull-in instability becomes manifest in the {\it non-coercivity} of the energy $\mathcal{E}$. Nevertheless, we shall use it later on to construct stationary solutions by a constrained variational approach, thereby bypassing the non-coercivity of the energy. Also, combining \eqref{MarcusMiller} and \eqref{rc2b} provides regularity estimates on $u$ as soon as one knows that it is separated from $-1$. 

\subsection{Maximum Principles}\label{Sec3.3}

When available, the celebrated maximum principles are central tools in the analysis of differential equations and guarantee, roughly speaking, non-negativity (or even positivity) of solutions provided the boundary and/or initial data are non-negative and the equations satisfy suitable properties. Thus, it is not surprising that the deepest results regarding MEMS equations are for situations where a maximum principle holds true. While such a tool is well-established for second-order (elliptic) differential equations, less is known for fourth-order problems to which \eqref{u1dd} belongs when~$\beta>0$. We are mainly interested in the fourth-order operator $\beta \Delta^2 - \tau \Delta$ in a bounded smooth domain $D$ of $\mathbb{R}^d$, $d\ge 1$, with $\beta>0$ and $\tau\ge 0$. Let us point out, though, that the validity of the maximum principle heavily depends on the boundary conditions and the domain itself. The solution to the fourth-order equation with {\it pinned} boundary conditions \eqref{bcubcuPinned}, i.e.,
\begin{equation}
\beta \Delta^2 \phi - \tau \Delta\phi = f \;\text{ in }\; D\ , \qquad \phi=\Delta \phi = 0 \;\text{ on }\; \partial D\ , \label{i1}
\end{equation}
satisfies $\phi(x)>0$, $x\in D$, provided that $f$ is a smooth and nonnegative function in $\bar{D}$ with $f\not\equiv 0$. This is readily seen by writing \eqref{i1} as a system of two second-order equations, each of them involving a second-order linear elliptic operator with homogeneous Dirichlet boundary conditions on $\partial D$, so that the maximum principle applies.\footnote{Note that this is not true in non-smooth domains, see \cite{GGS10}.} A similar result, however, fails to be true in general when the pinned boundary conditions \eqref{bcubcuPinned} are replaced by the {\it clamped} boundary conditions \eqref{bcu} in which case the same transformation no longer works as one equation of the second-order system comes without and the other with overdetermined boundary conditions. 
Nevertheless, it was observed by Boggio~\cite{Bog05} that, when $D$ is the unit ball $\mathbb{B}_1$ in $\mathbb{R}^d$, $d\ge 1$, and $\tau=0$, the associated Green function can be computed explicitly and is positive in $\mathbb{B}_1$.  An obvious consequence of this positivity property is that solutions to 
\begin{equation}
\Delta^2 \phi = f \;\text{ in }\; \mathbb{B}_1\ , \qquad \phi=\partial_\nu \phi = 0 \;\text{ on }\; \partial \mathbb{B}_1\ , \label{i2}
\end{equation}
 with a non-negative right-hand side $f\not\equiv 0$ are positive in $\mathbb{B}_1$. Actually, Boggio's celebrated result provides the impetus for several further studies, including the extension of the maximum principle to other domains \cite{EnP96, GrR10, GrS96, Had08, Sas07} and other operators \cite{Gru02, Boggio, Owe97, Sch80}. See also the monograph~\cite{GGS10} for a detailed discussion of positivity properties of higher-order elliptic operators. 

Our particular interest for the maximum principle with regard to MEMS equations is the inclusion of second-order terms in the differential operator accounting for stretching. As the following result shows, this is possible in one dimension or in radial symmetry.

\begin{prop}[Maximum Principle \hspace{-.1mm}\cite{Bog05, Gru02, Owe97, Boggio}]\label{P4}
Let $\beta>0$ and $\tau\ge 0$. 
\begin{enumerate}
\item[(i)] If $f: \bar{\mathbb{B}}_1\rightarrow [0,\infty)$ is smooth with $f\not\equiv 0$, then the solution to
\begin{equation*}
\beta\Delta^2\phi = f\ \text{ in } \mathbb{B}_1\ ,\quad \phi= \partial_\nu\phi=0\ \text{ on } \partial \mathbb{B}_1\ ,
\end{equation*}
satisfies $\phi(x)>0$, $x\in \mathbb{B}_1$. 
\item[(ii)] If $f: \bar{\mathbb{B}}_1\rightarrow [0,\infty)$ is smooth with $f\not\equiv 0$ and, in addition, radially symmetric when $d>1$, then the solution to
\begin{equation}\label{gutemine}
\beta\Delta^2\phi -\tau\Delta\phi =f\ \text{ in } \mathbb{B}_1\ ,\quad \phi= \partial_\nu\phi=0\ \text{ on } \partial \mathbb{B}_1\ ,
\end{equation}
satisfies $\phi(x)>0$, $x\in \mathbb{B}_1$. 
\item[(iii)] The eigenvalue problem 
\begin{equation}\label{obelix}
\beta\Delta^2\zeta -\tau\Delta\zeta =\mu \zeta\ \text{ in } \mathbb{B}_1\ ,\quad \zeta= \partial_\nu\zeta=0\ \text{ on } \partial \mathbb{B}_1\ ,
\end{equation}
has an eigenvalue $\mu_1>0$ with a corresponding radially symmetric (normalized) eigenfunction $\zeta_1>0$ in $\mathbb{B}_1$, and $\mu_1$ is the only eigenvalue with a positive radially symmetric eigenfunction. Moreover, any radially symmetric eigenfunction corresponding to the eigenvalue $\mu_1$ is a scalar multiple of $\zeta_1$.
\end{enumerate}
\end{prop}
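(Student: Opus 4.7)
I would handle the three statements sequentially, the main obstacle being part~(ii) which is the only place where a genuinely new argument beyond Boggio's kernel is needed.

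For part~(i), the strategy is simply to invoke Boggio's classical explicit formula \cite{Bog05} for the Green function of the clamped biharmonic problem on $\mathbb{B}_1$,
\begin{equation*}
G(x,y) \;=\; k_d\,|x-y|^{4-d}\int_1^{A(x,y)} \frac{(v^2-1)}{v^{d-1}}\,\mathrm{d}v,\qquad A(x,y)\,:=\,\frac{\big||x|\,y-x/|x|\big|}{|x-y|}\,,
\end{equation*}
and to observe that $A(x,y)\ge 1$ on $\mathbb{B}_1\times\mathbb{B}_1$, with equality only when $x=y$ or on $\partial\mathbb{B}_1$. Hence $G>0$ in $\mathbb{B}_1\times\mathbb{B}_1$, and the representation $\phi(x)=\beta^{-1}\int_{\mathbb{B}_1} G(x,y)f(y)\,\mathrm{d}y$ gives $\phi(x)>0$ throughout $\mathbb{B}_1$ whenever $f\ge 0$ with $f\not\equiv 0$.

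For part~(ii), the presence of $-\tau\Delta$ means that positivity of the Green function of $\beta\Delta^2-\tau\Delta$ is no longer automatic. The radial symmetry assumption when $d>1$ reduces \eqref{gutemine} to a fourth-order ODE in the radial variable, whose characteristic exponents for the factorization
\begin{equation*}
\beta\Delta^2-\tau\Delta \;=\; (-\Delta)\circ\bigl(-\beta\Delta+\tau\, I\bigr)
\end{equation*}
are $0$ (double) and $\pm\sqrt{\tau/\beta}$ (with appropriate Bessel-type replacements in dimension $d>1$). My plan is then to construct the radial Green function $G_\tau$ explicitly via this factorization and to verify its positivity by direct inspection, following Owen~\cite{Owe97} and Grunau~\cite{Gru02}. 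A more conceptual alternative, which I would try first, is a continuation argument in $\tau\ge 0$: the set of $\tau\ge 0$ for which $G_\tau$ is positive on the radial sector is nonempty by (i), open by continuity of $\tau\mapsto G_\tau$, and closed by a Hopf-type boundary-point estimate applied to $w:=-\beta\Delta\phi+\tau\phi$, which solves $-\Delta w=f\ge 0$. The main technical obstacle is precisely this closedness step, because $w$ does not have a prescribed sign on $\partial\mathbb{B}_1$ (where only $\phi=\partial_\nu\phi=0$ is known), so one has to exploit the second clamped condition together with the radial structure to propagate positivity across the limit.

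For part~(iii), once (ii) is established, the plan is to apply a Krein-Rutman-type argument. The solution operator $T_\tau:=(\beta\Delta^2-\tau\Delta)^{-1}$ with clamped boundary data, viewed on the subspace of radial functions in $L^2(\mathbb{B}_1)$ (or $C(\overline{\mathbb{B}_1})$), is compact by elliptic regularity and Rellich-Kondrachov, self-adjoint since the bilinear form $\beta\int\Delta\phi\Delta\varphi+\tau\int\nabla\phi\cdot\nabla\varphi$ is symmetric, and strongly positive on the radial cone by part~(ii). The Krein-Rutman theorem then delivers a simple largest eigenvalue of $T_\tau$, which corresponds to the smallest positive eigenvalue $\mu_1$ of $\beta\Delta^2-\tau\Delta$, with an associated positive radially symmetric eigenfunction $\zeta_1$ that is unique up to a scalar multiple among radial eigenfunctions. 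The uniqueness of $\mu_1$ as the only eigenvalue admitting a positive radial eigenfunction is then the standard consequence of strong positivity: any other radial eigenfunction is orthogonal to $\zeta_1$ in $L^2$ and must therefore change sign.
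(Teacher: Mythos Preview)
The paper does not actually prove this proposition; it attributes each part to the cited references and summarizes their methods in one paragraph. Your plan matches that summary closely: Boggio's explicit positive kernel for~(i); the factorization $\beta\Delta^2-\tau\Delta=(-\Delta)\circ(-\beta\Delta+\tau I)$ together with an explicit Green-function analysis for~(ii), which is exactly the ``composition of two second-order operators'' structure the paper points to in~\cite{Owe97,Gru02,Boggio}; and a Krein--Rutman argument on the radial sector for~(iii), which is the standard route once strong positivity from~(ii) is available.

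One remark on your part~(ii): the continuation-in-$\tau$ argument you list as your first attempt is \emph{not} what the cited references do, and the obstacle you yourself flag is genuine. The auxiliary function $w=-\beta\Delta\phi+\tau\phi$ solves $-\Delta w=f\ge 0$, but on $\partial\mathbb{B}_1$ one only knows $w=-\beta\Delta\phi$, whose sign is undetermined by the clamped conditions; this is precisely the ``overdetermined/underdetermined'' splitting difficulty the paper mentions just before the proposition. So the Hopf step does not close, and there is no obvious way to recover closedness without already knowing something like the Green-function positivity you are trying to prove. The references instead work directly with the radial ODE: Owen computes the one-dimensional Green function explicitly, Grunau uses a more structural second-order comparison argument, and \cite{Boggio} handles the radial higher-dimensional case via the composition structure. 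Your fallback plan (explicit radial Green function via the factorization) is therefore the correct route; I would simply drop the continuation detour.
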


As already mentioned the first statement of Proposition~\ref{P4} is proved in \cite{Bog05}. The maximum principle stated in Proposition~\ref{P4}~(ii) is established in \cite{Owe97} for $d=1$  by means of an explicit computation of the associated Green function and later extended in \cite{Gru02} (by a different proof relying on the maximum principle for second-order equations) also to include third order terms. The higher-dimensional case in radial symmetry and the application to the principal eigenvalue mentioned in Proposition~\ref{P4}~(iii) were recently shown for a wider class of of fourth-order operators that can be written as a composition of two second-order operators~\cite{Boggio}.

It is important to point out that the maximum principle fails for
\begin{equation*}
\beta\Delta^2\phi -\tau\Delta\phi +p \phi=f\ \text{ in } \mathbb{B}_1\ ,\quad \phi= \partial_\nu\phi=0\ \text{ on } \partial \mathbb{B}_1\ ,
\end{equation*}
when $p>0$ is too large. The maximum principle for evolution problems of the form \eqref{u1dd} is thus not valid, not even if $\gamma=0$. Therefore, when $\beta>0$, this central tool can mainly be helpful for the study of stationary solutions to \eqref{u1dd}.

\section{Stationary Problem}\label{S4}

We now focus our attention on classical stationary solutions to MEMS equations which in particular have a minimum  value strictly greater than $-1$. As pointed out in the introduction the pull-in instability of MEMS devices is supposed to be revealed by a sharp threshold \mbox{$\lambda_\ve^{stat}>0$} for the voltage parameter $\lambda$ above which no such stationary solution exists while below there is at least one. In Section~\ref{Sec4.1} we collect  the present knowledge on this matter for the free boundary problem \eqref{psi1d}-\eqref{icu1d}. As we shall see, however, the picture regarding the pull-in voltage is not yet completely established. For the vanishing aspect ratio model \eqref{uSG}, the study  is understandably more developed as briefly described in Section~\ref{Sec4.2}. 

\subsection{Stationary Solutions to the Free Boundary Problem}\label{Sec4.1}

If not stated otherwise, $\ve>0$ is fixed in this section and we often omit the dependence on this parameter. 

Stationary solutions $(u,\psi)$ to the free boundary problem~\eqref{psi1d}-\eqref{icu1d} satisfy equivalently
\begin{equation}\label{u1ddstat}
\begin{split}
\beta\partial_x^4 u-  \tau \partial_x^2 u 
 &=-\lambda\, g_\ve(u)\,,\qquad x\in I\,, \\
u(\pm 1)&=\beta\partial_x u(\pm 1)=0\ ,
\end{split}
\end{equation}
where $g_\ve(u)$ with $\psi=\psi_u$ is given in \eqref{gg}, its properties being stated in Proposition~\ref{P1}. Of course, $u>-1$ in $[-1,1]$.

\subsubsection{Non-Existence}

As expected on physical grounds, stationary solutions do not exist if the applied voltage is too large. This is rather easily seen in the second-order case, that is, when $\beta=0$ in~\eqref{u1ddstat}. Indeed, in this case, any stationary solution $u$ is clearly {\em convex}.  Consequently, the function $(x,z)\mapsto 1+z-u(x)$ is a supersolution to \eqref{psi1d}-\eqref{bcpsi1d}, and the comparison principle implies
$$
\psi_u(x,z)\le 1+z-u(x)\ , \quad (x,z)\in \overline{\Omega(u)}\ ,
$$
hence
\begin{equation}\label{convex}
\partial_z \psi_u(x,u(x)) \ge 1\ ,\quad x\in I\ ,
\end{equation}
according to \eqref{bcpsi1d}. Consequently, $g_\ve(u)\ge 1$ in $I$ and we infer from \eqref{u1ddstat} (with $\beta=0$) that $\tau \partial_x^2 u \ge \lambda$ in $I$. Multiplying this inequality by the nonnegative eigenfunction $\phi_1(x)=\cos(\pi x/2)$ of the Laplacian $-\partial_x^2$ in $I$ with homogeneous Dirichlet boundary conditions and integrating over $I$ readily give
$$
\lambda \int_{-1}^1 \phi_1\ \rd x \le \tau \int_{-1}^1 u \partial_x^2\phi_1\ \rd x = - \frac{\tau\pi^2}{4} \int_{-1}^1 \phi_1 u \ \rd x \le \frac{\tau\pi^2}{4} \int_{-1}^1 \phi_1 \ \rd x \ ,
$$
since $u>-1$. This shows that $\lambda$ cannot exceed $\tau\pi^2/4$ (this bound can be improved, though).

Unfortunately, in the fourth-order case $\beta>0$, stationary deflections with clamped boundaries are not convex and \eqref{convex} thus no longer holds. Nevertheless, the non-existence result is still true, but requires a more sophisticated proof. It is based on a nonlinear variant of the eigenfunction method involving the positive eigenfunction $\zeta_1$ from Proposition~\ref{P4} associated with the operator $\beta\partial_x^4 - \tau\partial_x^2$ subject to clamped boundary conditions  and normalized by $\|\zeta_1\|_{1}=1$. 

\begin{theorem}[Non-Existence \cite{LW13, LW14a}]\label{T2}
Let $(\beta,\tau)\in [0,\infty)^2\setminus\{(0,0)\}$.
There exist $\ve_*>0$ (with $\ve_*=\infty$ if $\beta=0$) and \mbox{${\lambda_{ns}}: (0,\ve_*)\to (0,\infty)$} such that there is no stationary solution $(u,\psi_u)$ to \eqref{psi1d}-\eqref{icu1d} for $\ve\in (0,\ve_*)$ and $\lambda>{\lambda_{ns}}(\ve)$.
\end{theorem}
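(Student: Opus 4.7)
The plan is to employ a nonlinear variant of the classical eigenfunction method, integrating the stationary equation against a suitable non-negative eigenfunction and exploiting the positivity of $g_\varepsilon(u)$. The case $\beta=0$ is essentially sketched in the paragraph preceding the theorem, so I would only need to formalize it: the convexity of any solution $u$ to $-\tau\partial_x^2 u = -\lambda g_\varepsilon(u)$ makes $(x,z)\mapsto 1+z-u(x)$ a supersolution to \eqref{psi1d}--\eqref{bcpsi1d} in $\Omega(u)$, whence $\psi_u(x,z)\le 1+z-u(x)$ by the comparison principle and $\partial_z\psi_u(x,u(x))\ge 1$, so $g_\varepsilon(u)\ge 1$. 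Testing against $\phi_1(x)=\cos(\pi x/2)$ and integrating twice by parts using $u(\pm 1)=0$ and $u>-1$ gives $\lambda \le \tau\pi^2/4$, and since this works for all $\varepsilon>0$, one may take $\varepsilon_*=\infty$.

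For the fourth-order case $\beta>0$, the plan is to replace $\phi_1$ by the positive normalized eigenfunction $\zeta_1$ of $\beta\partial_x^4-\tau\partial_x^2$ with clamped boundary conditions and eigenvalue $\mu_1>0$ supplied by Proposition~\ref{P4}(iii). Multiplying \eqref{u1ddstat} by $\zeta_1$ and integrating by parts four times — all boundary terms vanish because both $u$ and $\zeta_1$ satisfy clamped boundary conditions — yields
\begin{equation*}
\mu_1 \int_{-1}^1 u(x)\,\zeta_1(x)\,\mathrm{d}x = -\lambda\int_{-1}^1 g_\varepsilon(u)(x)\,\zeta_1(x)\,\mathrm{d}x.
\end{equation*}
Since $u>-1$ and $\|\zeta_1\|_1=1$, the left-hand side is strictly greater than $-\mu_1$, so that $\lambda\int_{-1}^1 g_\varepsilon(u)\,\zeta_1\,\mathrm{d}x<\mu_1$. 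A non-existence statement of the form $\lambda>\lambda_{ns}(\varepsilon)$ will follow as soon as I can establish a lower bound
\begin{equation*}
\int_{-1}^1 g_\varepsilon(u)(x)\,\zeta_1(x)\,\mathrm{d}x \ge c(\varepsilon) > 0
\end{equation*}
that is uniform in $u\in S_2^4$ solving \eqref{u1ddstat}; then $\lambda_{ns}(\varepsilon):=\mu_1/c(\varepsilon)$ does the job.

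The main obstacle is precisely this uniform positive lower bound: for $\beta>0$ stationary deflections are no longer convex, so the comparison argument which sufficed in the $\beta=0$ case collapses and we must work directly with $\psi_u$. My strategy would be to revive a comparison argument for small $\varepsilon$. Since $\psi_u$ and the vanishing-aspect-ratio profile $(1+z)/(1+u(x))$ coincide on $\partial\Omega(u)$, a direct calculation shows that $(1+z)/(1+u(x))$ fails to solve \eqref{psi1d} only by a term involving $\varepsilon^2$ and the $x$-derivatives of $u$. Treating this term as a perturbation, one expects a bound of the form $\partial_z\psi_u(x,u(x))\ge 1/(1+u(x))-C(\varepsilon,u)$ with $C(\varepsilon,u)\to 0$ as $\varepsilon\to 0$; combined with $u\le 0$ (which itself follows from the sign of $g_\varepsilon(u)$ and the maximum principle of Proposition~\ref{P4}(ii)) this yields $g_\varepsilon(u)\ge 1/(1+u)^2 - C(\varepsilon,u)\ge 1 - C(\varepsilon,u)$, hence a positive lower bound provided $\varepsilon<\varepsilon_*$ for some threshold $\varepsilon_*>0$ determined by the perturbation estimate. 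The technical crux is to make $C(\varepsilon,u)$ independent of $u$, and to do so one exploits the a priori estimates available for stationary solutions (in particular the regularity statements in Proposition~\ref{P1} and the bounds derived from the variational identity \eqref{b19}), trading regularity of $\psi_u$ for control of the correction term.
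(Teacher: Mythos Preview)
Your treatment of the case $\beta=0$ is correct and matches the paper. For $\beta>0$, your opening identity $\mu_1\int_{-1}^1 u\,\zeta_1\,\rd x=-\lambda\int_{-1}^1 g_\varepsilon(u)\,\zeta_1\,\rd x$ and the resulting bound $\lambda\int_{-1}^1 g_\varepsilon(u)\,\zeta_1\,\rd x<\mu_1$ are also fine. The genuine gap is in your final step: the uniform lower bound $\int_{-1}^1 g_\varepsilon(u)\,\zeta_1\,\rd x\ge c(\varepsilon)>0$ cannot be obtained by the perturbation argument you sketch. The defect of the vanishing-aspect-ratio profile $(1+z)/(1+u(x))$ in \eqref{psi1d} is of size $\varepsilon^2$ times a quantity involving $\partial_x u$ and $\partial_x^2 u$, and any resulting correction $C(\varepsilon,u)$ will inevitably carry a factor like $\varepsilon^2\|\partial_x u\|^2$ or $\varepsilon^2\|\partial_x^2 u\|$. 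There is no a~priori control on these norms that is uniform over all stationary solutions and all $\lambda$: the estimates from Proposition~\ref{P1} depend on $\kappa$ (the distance of $u$ to $-1$) and on $\|u\|_{W_q^2}$, and the variational identity \eqref{b19} ties these back to $\lambda$, which is precisely the quantity you are trying to bound. The argument is circular.

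The paper breaks this circularity by testing \eqref{u1ddstat} not against $\zeta_1$ but against the \emph{nonlinear} test function $\zeta_1(1+\alpha\mathcal{U})$, where $-\partial_x^2\mathcal{U}=u$ in $I$ with $\mathcal{U}(\pm 1)=0$ and $\alpha=\min\{1,\varepsilon^2\}$. The extra factor $\alpha\mathcal{U}$, after repeated integration by parts against $\beta\partial_x^4 u$, produces a \emph{favorable} term $-\alpha\beta\int_{-1}^1\zeta_1|\partial_x u|^2\,\rd x$ on the right-hand side. This is exactly what is needed to absorb the uncontrolled gradient contribution $\varepsilon^2\int_{-1}^1\zeta_1|\partial_x u|^2\,\rd x$ that appears once one bounds $g_\varepsilon(u)$ from below via Young's inequality and the Green-formula estimate $\int_{-1}^1\zeta_1(1+\varepsilon^2|\partial_x u|^2)\partial_z\psi_u(\cdot,u)\,\rd x\ge 1-\varepsilon^2\|\partial_x^2\zeta_1\|_1$. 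A judicious choice of the Young parameter then cancels the gradient terms entirely and yields an upper bound on $\lambda$ depending only on $\varepsilon$, valid for $\varepsilon<\varepsilon_*:=1/\sqrt{\|\partial_x^2\zeta_1\|_1}$. The test function $\zeta_1(1+\alpha\mathcal{U})$ is the missing idea in your proposal.
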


Of course, in Theorem~\ref{T2} it is expected that $\ve_*=\infty$ also if $\beta>0$.

\begin{proof} Let $u$ be a stationary solution to \eqref{u1ddstat} with $\beta>0$ and recall that it satisfies $-1<u<0$ in $I$ by Proposition~\ref{P4}. 
Hence the solution $\mathcal{U}\in H^2_D(I)$ to 
$$
- \partial_x^2 \mathcal{U} = u \;\mbox{ in }\; I\ , \quad \mathcal{U}(\pm 1)=0\ ,
$$
satisfies $-1/2 \le \mathcal{U}\le 0$. We multiply \eqref{u1ddstat} by  $\zeta_1(1+\alpha \mathcal{U})$ with $\alpha = \min{\{1,\ve^2\}}\in (0,1]$ and integrate over $I$. After integrating several times by parts and using the properties of $u$, $\mathcal{U}$, and $\zeta_1$ including the lower bound $1+  \alpha \mathcal{U}\ge 1/2$, we obtain
$$
\frac{\lambda}{2} \int_{-1}^1 \zeta_1 g_\ve(u) \ \rd x \le \lambda \int_{-1}^1 (1+ \alpha\mathcal{U}) \zeta_1 g_\ve(u)\ \rd x \le K_1 - \alpha \beta \int_{-1}^1 \zeta_1 |\partial_x u|^2\ \rd x 
$$
for some $K_1>0$ depending only on $\beta$ and $\tau$. Next, setting
$$
\gamma_m(x) := \partial_z \psi_u(x,u(x))\ ,\quad x\in I\ ,
$$ 
we infer from Young's inequality that, for $\delta\in (0,1)$, 
$$
\frac{\lambda}{2} \int_{-1}^1 \zeta_1 g_\ve(u) \ \rd x \ge \frac{\lambda}{\delta} \int_{-1}^1 \zeta_1 (1+\varepsilon^2 |\partial_x u|^2) \gamma_m\ \rd x - \frac{\lambda}{2\delta^2} \left( 1 + \ve^2 \int_{-1}^1 \zeta_1 |\partial_x u|^2 \ \rd x \right)\ ,
$$
while computations in the spirit of \eqref{MarcusMiller} and \eqref{b19} and the non-positivity of $u$ give
$$
\int_{\Omega(u)} \zeta_1 |\partial_z \psi_u|^2 \ \rd (x,z) \ge \int_{-1}^1 \frac{\zeta_1}{1+u}\ \rd x \ge 1
$$
and
$$
\int_{-1}^1 \zeta_1 (1+\varepsilon^2 |\partial_x u|^2) \gamma_m\ \rd x \ge \int_{\Omega(u)} \zeta_1 \left( \ve^2 |\partial_x \psi_u|^2 + |\partial_z \psi_u|^2 \right)\, \rd (x,z) - \ve^2 \|\partial_x^2\zeta_1\|_{1} \ .
$$
Combining these inequalities leads to
$$
K_1 \ge \frac{\lambda}{\delta} \left[ 1 - \frac{1}{2\delta} - \ve^2 \|\partial_x^2\zeta_1\|_{1} \right] + \left( \alpha \beta - \frac{\lambda \ve^2}{2\delta^2} \right) \int_{-1}^1 \zeta_1 |\partial_x u|^2 \ \rd x
$$
and the choice $\delta^2 := \lambda\ve^2/(2\alpha\beta)$ cancels the last term of the right-hand side of the above inequality. Consequently,
$$
K_1 + \frac{ \alpha \beta}{\ve^2} \ge \frac{\sqrt{2 \alpha\beta}}{\ve} \left[ 1 - \ve^2 \|\partial_x^2\zeta_1\|_{1} \right] \sqrt{\lambda}\ ,
$$
from which we readily deduce that $\lambda$ cannot exceed a threshold value depending on $\ve$ provided $\ve<\ve_* := 1/\sqrt{\|\partial_x^2\zeta_1\|_{1}}$.  In particular, for $\ve \in (0 , \min\{1,\ve_*/2\})$ we conclude from the above bound and the choice of $\alpha$ that 
$$
\lambda \le \frac{8 (K_1+\beta)^2}{9\beta}\ ,
$$
which does not depend on $\ve$.
\end{proof}

\subsubsection{Existence}

To establish the existence of stationary solutions for small values of $\lambda$ we introduce the operator $A:=\beta\partial_x^4 - \tau\partial_x^2  \in \mathcal{L}(D(A),L_p(I))$ with $D(A):=H_D^4(I)$ and $p=2$ if $\beta>0$ and $D(A):=W_{p,D}^2(I)$ with $p>2$ if $\beta=0$ and $\tau>0$. Note that $A$ is invertible. Given $q>2$, Proposition~\ref{P1} implies that solutions to \eqref{u1ddstat} are the zeros of the smooth function
$$
F:\mathbb{R}\times \big(D(A)\cap S_q^2(\kappa)\big)\rightarrow D(A)\ ,\quad (\lambda,v)\mapsto v + \lambda A^{-1} g_\ve(v)\ .
$$
The Implicit Function Theorem yields a smooth curve $(\lambda, U_\lambda)$ in $\R\times D(A)$  of zeros starting at $(\lambda,v)=(0,0)$ which can be maximally extended as long as the principal eigenvalue of the associated linearized operator $A+\lambda \partial_u g_\ve(U_\lambda)$ in $L_p(I)$ is positive. 

\begin{theorem}[Existence \cite{LW14a, ELW14}]\label{T1}
Let $(\beta,\tau)\in\R_+^2\setminus\{(0,0)\}$. There is $\lambda_s(\varepsilon)>0$ such that for each $\lambda\in (0,\lambda_s(\varepsilon))$ there exists a stationary solution $(U_\lambda,\Psi_{U_\lambda})$ to \eqref{psi1d}-\eqref{icu1d}  satisfying $-1<U_\lambda< 0$ in $I$ and the associated  linearized operator $A+\lambda \partial_u g_\ve(U_\lambda)$ in $L_p(I)$ has a positive principal eigenvalue. In addition, $U_\lambda =U_\lambda(x)$ and $\Psi_{U_\lambda} =\Psi_{U_\lambda}(x,z)$ are even with respect to $x\in I$. 
\end{theorem}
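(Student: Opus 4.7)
The plan is to apply the analytic Implicit Function Theorem to the map
\[
F:\mathbb{R}\times\bigl(D(A)\cap S_q^2(\kappa)\bigr)\to D(A)\,,\qquad F(\lambda,v):=v+\lambda A^{-1}g_\ve(v)\,,
\]
introduced just before the statement, and then to extract qualitative information (sign, positive principal eigenvalue, evenness) from the additional structure of the problem. Smoothness (actually analyticity) of $F$ with respect to $v$ comes from Proposition~\ref{P1}(iii), which gives that $g_\ve$ is analytic from $S_q^2(\kappa)$ into $H^\sigma(I)\hookrightarrow L_p(I)$, while invertibility of $A:D(A)\to L_p(I)$ takes care of the second factor.

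\textbf{Step 1: local curve and maximal continuation.} Check $F(0,0)=0$ and $\partial_v F(0,0)=\mathrm{id}_{D(A)}$, which is trivially invertible; the IFT produces an analytic curve $\lambda\mapsto U_\lambda\in D(A)$ of solutions to \eqref{u1ddstat} with $U_0=0$. Extend this curve by continuation as long as $U_\lambda\in S_q^2(\kappa)$ for some $\kappa>0$ and the linearised operator $A+\lambda\,\partial_u g_\ve(U_\lambda)$ remains invertible. Denote by $\lambda_s(\ve)>0$ the supremum of the interval on which both conditions hold.

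\textbf{Step 2: sign and separation from $-1$.} Since $g_\ve(U_\lambda)\ge 0$ by \eqref{gg}, and in fact $g_\ve(U_\lambda)\not\equiv 0$ (as the lower bound in \eqref{MarcusMiller} shows $\int_I g_\ve(U_\lambda)\,\mathrm{d}x\ge \int_I\!\mathrm{d}x/(1+U_\lambda)>0$), I would invoke Proposition~\ref{P4}(ii) in the fourth-order case and the classical second-order maximum principle when $\beta=0$ to conclude $-U_\lambda>0$ in $I$, i.e.\ $U_\lambda<0$. To secure $U_\lambda>-1$, use the embedding $D(A)\hookrightarrow C(\bar I)$ together with $U_0=0$ and the continuity of $\lambda\mapsto U_\lambda$: there exists $\kappa_0\in(0,1)$ such that $U_\lambda\in S_q^2(\kappa_0)$ throughout a neighbourhood of $0$, which is exactly what is needed to stay in the domain of $F$.

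\textbf{Step 3: positive principal eigenvalue.} At $\lambda=0$ the linearisation reduces to $A$, whose principal eigenvalue is positive by Proposition~\ref{P4}(iii) (the one-dimensional case of that result, with $\mathbb{B}_1=I$, when $\beta>0$; elementary Rayleigh quotients when $\beta=0$). Analytic perturbation theory for compact operators, applied to $A^{-1}\partial_u g_\ve(U_\lambda)$, shows that the principal eigenvalue $\mu_1(\lambda)$ depends continuously (even analytically) on $\lambda$ near $0$, and hence $\mu_1(\lambda)>0$ on a nontrivial interval $[0,\lambda_s(\ve))$. This simultaneously yields $\lambda_s(\ve)>0$ and invertibility of the linearisation, justifying the continuation argument of Step~1.

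\textbf{Step 4: evenness.} The equation \eqref{u1ddstat} and the boundary conditions are invariant under $x\mapsto -x$, and the same invariance transfers to $g_\ve$ through the uniqueness of $\psi_u$. Therefore $F$ restricts to the closed subspace of even functions of $D(A)\cap S_q^2(\kappa)$, and running the IFT there produces an even solution curve; by local uniqueness of solutions to $F(\lambda,\cdot)=0$ near $v=0$ this even curve coincides with $U_\lambda$. The evenness of $\Psi_{U_\lambda}$ then follows from that of $U_\lambda$ and uniqueness in Proposition~\ref{P1}(i). I expect the main technical obstacle to be Step~2: verifying $U_\lambda<0$ uses in an essential way the delicate fourth-order maximum principle of Proposition~\ref{P4}(ii) (Boggio-type positivity under clamped boundary conditions), which is available only on one-dimensional intervals or on balls in higher dimensions with radial data --- a structural limitation that will also shape the scope of subsequent results.
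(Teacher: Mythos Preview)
Your proposal is correct and follows essentially the same approach as the paper: apply the Implicit Function Theorem to the map $F(\lambda,v)=v+\lambda A^{-1}g_\ve(v)$ at $(0,0)$, continue the resulting curve as long as the principal eigenvalue of $A+\lambda\,\partial_u g_\ve(U_\lambda)$ stays positive, and read off the sign of $U_\lambda$ from Proposition~\ref{P4}. One small quibble: in Step~2 you cite \eqref{MarcusMiller} to get $\int_I g_\ve(U_\lambda)\,\mathrm{d}x>0$, but \eqref{MarcusMiller} bounds $\mathcal{E}_e(u)$, not $\int_I g_\ve(u)\,\mathrm{d}x$; the nonvanishing of $g_\ve(U_\lambda)$ follows more directly from \eqref{ggg} together with the boundary values $\psi_u(x,u(x))=1$ and $\psi_u(x,-1)=0$, which force $\partial_z\psi_u(\cdot,u)\not\equiv 0$.
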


The sign property of $U_\lambda$ is a consequence of Proposition~\ref{P4}.  Actually, one can show that these stationary solutions are locally asymptotically stable for the evolution \cite{LW14a, ELW14}. Obviously, the two critical values found in Theorem~\ref{T2} and Theorem~\ref{T1} satisfy $\lambda_{ns}(\ve)\ge\lambda_\ve^{stat}\ge\lambda_s(\ve)$, but otherwise no further information is yet available. In particular, the occurrence of a sharp threshold in form of a pull-in instability (i.e. $\lambda_{ns}(\ve)=\lambda_s(\ve)$)  is an open problem. Even the simpler problem of identifying the behavior of $(U_\lambda, \Psi_{U_\lambda})$ as $\lambda\to\lambda_s(\ve)$ seems currently out of reach.

\subsubsection{Multiplicity}

Theorem~\ref{T1} leaves open the question whether more than one stationary solution exists for small values of $\lambda$ (as suggested by what is known for the vanishing aspect ratio equation, see Section~\ref{Sec4.2} below). 
Based on the variational structure of \eqref{u1ddstat} pointed out in Proposition~\ref{P2} it is possible to show that multiple stationary solutions indeed exist for (some) small values of $\lambda$. Recall that the total energy $\mathcal{E}$ is non-coercive and a plain minimization thereof is thus not effective. However, when $\beta>0$ one can find additional critical points of $\mathcal{E}$ by considering a constrained minimization problem, where the idea is to minimize the mechanical energy $\mathcal{E}_m$ on a set of deflections with constant electrostatic energy $\mathcal{E}_e$.

\begin{theorem}[Multiplicity \cite{LW_Multi}]\label{T3}
Let $\beta>0$ and $\tau\ge 0$. Given \mbox{$\rho>2$}, there is $(\lambda_\rho,u_\rho,\psi_\rho)$ with $\lambda_\rho>0$, $u_\rho\in H_D^4(I)$, and $\psi_{u_\rho}\in H^2(\Omega(u_\rho))$ such that $(u_\rho,\psi_{u_\rho})$ is a stationary solution to \eqref{psi1d}-\eqref{bcu1d} with $\lambda=\lambda_ \rho$. Both $u_\rho=u_\rho(x)$ and $\psi_{u_\rho} = \psi_{u_\rho}(x,z)$ are even with respect to $x\in I$ and $-1<u_\rho<0$ in $I$. Moreover, $\lambda_\rho\rightarrow 0$ as $\rho\rightarrow\infty$ and $u_\rho\not= U_{\lambda_\rho}$ for all $\rho>2$ sufficiently large, the functions $U_\lambda$ being defined in Theorem~\ref{T1}.
\end{theorem}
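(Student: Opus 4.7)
My plan is to follow the constrained minimization strategy sketched in the excerpt. Fix $\rho > 2$ and consider the level set
\[
\mathcal{M}_\rho := \bigl\{ u \in H_D^2(I)\, :\, u > -1 \text{ in } \bar I,\ u \text{ is even},\ \mathcal{E}_e(u) = \rho \bigr\},
\]
where I build evenness into the competitor class from the outset, since both energies are invariant under $x \mapsto -x$ and this preserves the variational setup. Nonemptiness of $\mathcal{M}_\rho$ is established via the family $v_s := -s\phi$ for $s\in[0,1)$ with $\phi \in H_D^2(I)$ a fixed even nonnegative function with $\phi(0) = \|\phi\|_\infty = 1$: continuity of $\mathcal{E}_e$ on $S_2^1$ from Proposition~\ref{P1}(i), together with the divergence of $\int_{-1}^1 dx/(1-s\phi(x))$ as $s\to 1^-$ forced by the lower bound in \eqref{MarcusMiller} and a Taylor expansion of $\phi$ near $0$, gives by the intermediate value theorem some $s_\rho \in (0,1)$ with $\mathcal{E}_e(v_{s_\rho}) = \rho$.

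To extract a minimizer, I take a minimizing sequence $(u_n) \subset \mathcal{M}_\rho$ for $\mathcal{E}_m$. Coercivity of $\mathcal{E}_m$ on $H_D^2(I)$ yields a weak limit $u_\rho$, with strong convergence in $H^\alpha(I)$ for any $\alpha \in (3/2,2)$, and in $C^1(\bar I)$. The key observation is that $u_\rho > -1$ on $\bar I$: Fatou applied to the lower bound in \eqref{MarcusMiller} gives $\int_{-1}^1 dx/(1+u_\rho) \leq \rho < \infty$, while an interior touchdown $u_\rho(x_0) = -1$ combined with $\partial_x u_\rho(x_0) = 0$ (as an interior minimum of a $C^1$ function) and $H^2$-regularity would produce $1+u_\rho(x) = O(|x-x_0|^{3/2})$, hence a non-integrable singularity in $1/(1+u_\rho)$, a contradiction. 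Continuity of $\mathcal{E}_e$ on $S_2^\alpha$ (Propositions~\ref{P1}(ii) and~\ref{P2}) then gives $\mathcal{E}_e(u_\rho) = \rho$, so $u_\rho \in \mathcal{M}_\rho$, and weak lower semicontinuity of $\mathcal{E}_m$ makes $u_\rho$ a minimizer. The smoothness provided by Proposition~\ref{P2} permits a Lagrange multiplier argument, producing $\lambda_\rho \in \mathbb{R}$ with $\beta \partial_x^4 u_\rho - \tau \partial_x^2 u_\rho = -\lambda_\rho g_\varepsilon(u_\rho)$ under clamped boundary conditions. A trichotomy fixes the sign: $\lambda_\rho = 0$ forces $u_\rho \equiv 0$ by invertibility of $A = \beta\partial_x^4 - \tau\partial_x^2$, contradicting $\mathcal{E}_e(u_\rho) = \rho > 2 = \mathcal{E}_e(0)$; $\lambda_\rho < 0$ gives $Au_\rho > 0$, hence $u_\rho > 0$ by Proposition~\ref{P4}(ii), and then $\mathcal{E}_e(u_\rho) \leq \mathcal{E}_e(0) = 2$ by the monotonicity in Proposition~\ref{P3}, another contradiction. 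So $\lambda_\rho > 0$, and applying Proposition~\ref{P4}(ii) to $-u_\rho$ yields $u_\rho < 0$ in $I$; bootstrapping via Proposition~\ref{P1} delivers $u_\rho \in H_D^4(I)$ and $\psi_{u_\rho} \in H^2(\Omega(u_\rho))$.

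For the asymptotics, the competitor $v_{s_\rho}$ gives $\mathcal{E}_m(u_\rho) \leq \mathcal{E}_m(v_{s_\rho}) \leq C$ uniformly in $\rho$, so $(u_\rho)$ stays bounded in $H_D^2(I) \hookrightarrow C^1(\bar I)$. Testing the Euler--Lagrange equation against the positive eigenfunction $\zeta_1$ of Proposition~\ref{P4}(iii) and using $-u_\rho \leq 1$ gives
\[
\lambda_\rho \int_{-1}^1 g_\varepsilon(u_\rho)\,\zeta_1\,dx \leq \mu_1 \|\zeta_1\|_1.
\]
On the other hand, identity \eqref{b19} combined with $-u_\rho \leq 1$ yields $\rho - 2 \leq \int_{-1}^1 (1+\varepsilon^2|\partial_x u_\rho|^2)\,\partial_z\psi_{u_\rho}(\cdot,u_\rho)\,dx$, and Cauchy--Schwarz together with the uniform $H^2$-bound on $u_\rho$ produce $\|g_\varepsilon(u_\rho)\|_1 \geq c(\rho-2)^2$. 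The clamped boundary conditions plus the $C^1$-bound on $u_\rho$ keep $\psi_{u_\rho}$, hence $g_\varepsilon(u_\rho)$, regular near $x = \pm 1$, so the $L^1$-blowup of $g_\varepsilon(u_\rho)$ must concentrate in an interior set where $\zeta_1$ is bounded below; therefore $\int_{-1}^1 g_\varepsilon(u_\rho)\zeta_1\,dx \to \infty$ and $\lambda_\rho \to 0$. Finally, smoothness of the branch $\lambda \mapsto U_\lambda$ from Theorem~\ref{T1} forces $U_{\lambda_\rho} \to 0$ in $H_D^2(I)$ as $\rho \to \infty$, hence $\mathcal{E}_e(U_{\lambda_\rho}) \to 2$ by Proposition~\ref{P1}(i), while $\mathcal{E}_e(u_\rho) = \rho \to \infty$; the two solutions must therefore differ for all sufficiently large $\rho$.

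The hardest step, I expect, is the last one: the uniform-in-$\rho$ localization of the $L^1$-blowup of $g_\varepsilon(u_\rho)$ away from the boundary. Making this rigorous requires a quantitative use of the elliptic analysis of Section~\ref{Sec3.1} for the electrostatic potential near the clamped edges, a delicate point because the bounds in Proposition~\ref{P1} are global whereas what is needed here is local control uniform in the family $(u_\rho)$, valid despite $u_\rho$ approaching $-1$ somewhere in the interior.
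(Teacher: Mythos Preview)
Your overall strategy---constrained minimization of $\mathcal{E}_m$ on the level set $\{\mathcal{E}_e=\rho\}$, Lagrange multiplier, then asymptotics via \eqref{b19}---is exactly the paper's approach, and most of your execution is correct. Your no-touchdown argument for the weak limit (using $H^2\hookrightarrow C^{1,1/2}$ to force $1+u_\rho(x)=O(|x-x_0|^{3/2})$ near a putative touchdown point, hence non-integrability of $1/(1+u_\rho)$) is a valid alternative to the paper's route, which instead derives a uniform lower bound $u_k\ge -1+\delta(\rho)$ directly on the minimizing sequence. Your trichotomy for the sign of $\lambda_\rho$ is clean and correct.

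The gap you flag in the final step is real, and your diagnosis is accurate: extracting $\int g_\varepsilon(u_\rho)\zeta_1\,\rd x\to\infty$ from $\|g_\varepsilon(u_\rho)\|_1\to\infty$ would require uniform-in-$\rho$ local control of $\partial_z\psi_{u_\rho}$ near $x=\pm 1$, which is a genuinely nonlocal elliptic estimate not provided by Proposition~\ref{P1} (the potential in the boundary strip still feels the interior degeneration of $1+u_\rho$). The paper sidesteps this entirely by testing the Euler--Lagrange equation against $u_\rho$ rather than $\zeta_1$: this gives
\[
2\,\mathcal{E}_m(u_\rho)=\lambda_\rho\int_{-1}^1(-u_\rho)\,g_\varepsilon(u_\rho)\,\rd x
=\lambda_\rho\int_{-1}^1(-u_\rho)\bigl(1+\varepsilon^2|\partial_x u_\rho|^2\bigr)\bigl(\partial_z\psi_{u_\rho}(\cdot,u_\rho)\bigr)^2\,\rd x\, .
\]
Now apply Cauchy--Schwarz to \eqref{b19} with the weight $(-u_\rho)(1+\varepsilon^2|\partial_x u_\rho|^2)$:
\[
(\rho-2)^2\le\Bigl(\int_{-1}^1(-u_\rho)\bigl(1+\varepsilon^2|\partial_x u_\rho|^2\bigr)\,\rd x\Bigr)\cdot\frac{2\,\mathcal{E}_m(u_\rho)}{\lambda_\rho}\, .
\]
Since $-u_\rho\le 1$ and $\mathcal{E}_m(u_\rho)\le\mu_\infty:=\sup_{\varrho>2}\mu(\varrho)<\infty$ (the paper establishes this uniform bound from the monotonicity of $\rho\mapsto\mu(\rho)$; your competitor $v_{s_\rho}$ gives the same conclusion), the first factor is bounded by a constant depending only on $\mu_\infty$ and $\varepsilon$, and one obtains directly $\lambda_\rho\le c(\mu_\infty)/(\rho-2)^2$. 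No boundary localization is needed.
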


\begin{proof}
For a given $\rho> 2$ define the (non-empty) set
\begin{equation*}
\mathcal{A}_\rho := \left\{ u \in H_D^2(I)\ ;\ u \;\text{ is even}\,, -1<u\le 0 \ ,\ \mathcal{E}_e(u)=\rho \right\} 
\end{equation*}
and 
\begin{equation*}
\mu(\rho) := \inf_{u\in \mathcal{A}_\rho} \mathcal{E}_m(u) \ge 0\,.
\end{equation*} 
Based on Proposition~\ref{P3} it can be shown that the function $\mu$ is non-decreasing on $(2,\infty)$ with
$$
\lim_{\varrho\to 2} \mu(\varrho)=0 \;\quad\text{ and }\;\quad \mu_\infty := \lim_{\varrho\to \infty} \mu(\varrho)  < \infty\ .
$$
Hence, taking a minimizing sequence $(u_k)_{k\ge 1}$ of $\mathcal{E}_m$ in $\mathcal{A}_\rho$ satisfying 
\begin{equation}\label{po}
\mu(\rho) \le \mathcal{E}_m(u_k) \le \mu(\rho) + \frac{1}{k} \,,
\end{equation}
one can combine \eqref{po} with the energy inequality \eqref{MarcusMiller} to show that there is a constant $\delta(\rho)\in (0,1)$ such that
\begin{equation}\label{Bobby}
-1+\delta(\rho) \le u_k(x) \le 0 \ , \quad x\in [-1,1]\ , \quad k\ge 1\,.
\end{equation}
Since the sequence $\big(\|\partial_x^2 u_k\|_{2}\big)_k$ is bounded due to \eqref{po}, one may extract a subsequence of $(u_k)_k$ which converges weakly to some $u_\rho$ in $H_D^2(I)$  still satisfying \eqref{Bobby}. Proposition~\ref{P2} entails $\mathcal{E}_e(u_\rho)=\rho$ so that $u_\rho$ is indeed a minimizer on $\mathcal{A}_\rho$ of $\mathcal{E}_m$ due to the latter's weak lower semi-continuity. Consequently, there is a Lagrange multiplier $\lambda_\rho\in\R$ such that
$$
\partial_u \mathcal{E}_m(u_\rho)-\lambda_\rho \partial_u \mathcal{E}_e(u_\rho)=0
$$
which, combined with classical elliptic regularity theory and Proposition~\ref{P2}, implies that $u_\rho\in H_D^4(I)$ is a classical solution to \eqref{u1ddstat}. Finally, owing to \eqref{b19} and the properties of $u_\rho$ one may derive a bound for the Lagrange multiplier $\lambda_\rho$ of the form
$$
0<\lambda_\rho\le \frac{c(\mu_\infty)}{(\rho-2)^2}
$$
which shows that $\lambda_\rho\rightarrow 0$ as $\mathcal{E}_e(u_\rho)=\rho\rightarrow\infty$ . In particular, $u_\rho\not= U_{\lambda_\rho}$ for all $\rho>2$ sufficiently large, since $\mathcal{E}_e(U_\lambda)\rightarrow 2$ as $\lambda\rightarrow 0$. 
\end{proof}

Theorem~\ref{T3} now yields multiplicity of stationary solutions for certain small values of~$\lambda$. Indeed, there is at least a sequence $\lambda_j\rightarrow 0$ of voltage values for which there are two different solutions $(u_j,\psi_{u_j})$ (i.e. $\rho=j$ in Theorem~\ref{T3}) and $(U_{\lambda_j}, \Psi_{U_{\lambda_j}})$ (i.e. $\lambda=\lambda_j$ in Theorem~\ref{T1}). Note that, by taking a different sequence $\rho_j\rightarrow\infty$ with $\rho_j\not= j$, one obtains different solutions $(u_{\rho_j},\psi_{u_{\rho_j}})$ but with possibly equal voltage values. We conjecture that the solutions constructed in Theorem~\ref{T3} actually lie on a smooth curve. This is supported by the numerical simulations performed in \cite{FlSxx} as well as by the results known for  the simplified vanishing aspect ratio model
\begin{equation}
\beta \partial_x^4 u_0 - \tau \partial_x^2 u_0 = - \frac{\lambda}{(1+u_0)^2}\ , \qquad x\in I\ , \label{u1SGstat}
\end{equation} 
supplemented with the boundary conditions \eqref{bcu1d}, 
see Section~\ref{Sec4.2} below. It is worth pointing out that the transformation for $\psi$ to a fixed domain introduced in Section~\ref{Sec3.1} is also used to perform the numerical simulations in \cite{FlSxx}.

A multiplicity result as stated in Theorem~\ref{T3} is not known up to now for the second-order case $\beta=0$ (but $\tau>0$), the main reason being the weaker regularity that comes along which appears to be insufficient to provide a control of the electrostatic energy $\mathcal{E}_e$ by the mechanical energy $\mathcal{E}_m$.

\subsubsection{Vanishing Aspect Ratio Limit}

Of course, the value $\lambda_s(\varepsilon)$ as well as the stationary solutions 
$$
(U_\lambda,\Psi_{U_\lambda})_{\lambda\in (0,\lambda_s(\varepsilon))} \;\;\text{ and }\;\; (u_\rho,\psi_{u_\rho})_{\rho>2}
$$ 
derived in Theorem~\ref{T1} and Theorem~\ref{T3}, respectively, depend on the aspect ratio $\ve$. It is then of primary interest to investigate the behavior of these solutions in the limit $\varepsilon\to 0$ and show that this limiting process produces stationary solutions to the stationary vanishing aspect ratio model \eqref{u1SGstat}. In particular, this puts the derivation of this model on firm ground which has been performed formally up to now. Roughly speaking, given a family $(u_\varepsilon, \psi_{u_\varepsilon})_\varepsilon$ of stationary solutions to \eqref{psi1d}-\eqref{icu1d} with $\lambda=\lambda_\varepsilon$, the main steps of the proof are the following:
\begin{itemize}
\item[(i)] Derive positive upper and lower bounds on $(\lambda_\varepsilon)_\varepsilon$, which do not depend on $\varepsilon$ small enough.
\item[(ii)] Derive estimates on $(u_\varepsilon)_\varepsilon$ in a suitable Sobolev space, e.g. in $W_q^2(I)$ for some $q\ge 2$, together with a positive lower bound on $(1+u_\varepsilon)_\varepsilon$, again not depending on $\varepsilon$ small enough.
\item[(iii)] Estimate the difference between $\psi_{u_\varepsilon}$ and $(x,z)\mapsto (1+z)/(1+u_\varepsilon(x))$ in a Sobolev space by positive powers of $\varepsilon$, which, in turn, provides an estimate of $g_\varepsilon(u_\varepsilon)-(1+u_\varepsilon)^{-2}$ and shows that it converges to zero as $\varepsilon\to 0$.
\end{itemize}
Once these three properties are established, a compactness argument is used to obtain the following convergence result:

\begin{theorem}[Vanishing Aspect Ratio Limit \cite{LW13, LW_v4}]\label{B}
There are a sequence $\varepsilon_k\to 0$, $\lambda_0>0$, and a (smooth) stationary solution $u_0$ to the stationary vanishing aspect ratio equation \eqref{u1SGstat} with $\lambda=\lambda_0$ such that
$$ 
\lim_{k\to\infty} \left[ \left| \lambda_{\varepsilon_k} - \lambda_0 \right| + \left\| u_{\varepsilon_k} - u_0\right\|_{ W_\infty^1(I)} \right] = 0
$$
and
\begin{equation}\label{411} 
\lim_{k\to \infty} \left[ \int_{\Omega(u_{\varepsilon_k})} \left| \psi_{u_{\varepsilon_k}}(x,z) - \frac{1+z}{1+u_{\varepsilon_k}(x)} \right|^2 \rd(x,z) + \left\| g_{\varepsilon_k}(u_{\varepsilon_k}) - \frac{1}{(1+u_{\varepsilon_k})^2} \right\|_2 \right]= 0\ .
\end{equation}
\end{theorem}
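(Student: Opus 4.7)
The plan is to implement the three-step scheme sketched right before the theorem: extract uniform bounds and a uniform separation from touchdown for the family $(\lambda_\varepsilon, u_\varepsilon)$, estimate quantitatively how close the potential $\psi_{u_\varepsilon}$ is to the formal vanishing-aspect-ratio profile $(1+z)/(1+u_\varepsilon(x))$, and then invoke compactness to pass to the limit in the plate equation.

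For the uniform bounds, I would combine Theorem~\ref{T2}, whose proof already yields an $\varepsilon$-independent upper bound $\lambda_\varepsilon \le C$, with Proposition~\ref{P4} which gives $-1 < u_\varepsilon \le 0$. Testing \eqref{u1ddstat} against $u_\varepsilon$ and using the $L_\infty$-boundedness of $g_\varepsilon$ provided by Proposition~\ref{P1}(iii), together with a bootstrap based on one-dimensional Sobolev embedding, would deliver a uniform $W_q^2$-estimate on $u_\varepsilon$ for any $q > 2$, conditional on the critical separation $1 + u_\varepsilon \ge \delta > 0$. This separation is the first real obstacle: I would derive it by combining the energy estimate \eqref{MarcusMiller} with the identity \eqref{b19} and the $L_\infty$-bound on $\partial_x^2 u_\varepsilon$ recovered from $W_q^2$, thereby preventing $\min u_\varepsilon$ from approaching $-1$ along the family.

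The heart of the argument is Step~(iii). I would transfer everything to the fixed rectangle $\Omega = I \times (0,1)$ via the change of variables \eqref{PLphi}, so that $\Phi_{u_\varepsilon}$ solves the Dirichlet problem \eqref{PLPhi}. A direct inspection of the operator $\mathcal{L}_{u_\varepsilon}$ and the source $f_{u_\varepsilon}$ reveals that they decompose as $-\partial_\eta^2$ plus a remainder carrying a prefactor $\varepsilon^2$ multiplied by quantities controlled by the $W_q^2$-norm of $u_\varepsilon$ and by $\delta^{-1}$. A standard energy estimate with vanishing Dirichlet data then yields $\|\Phi_{u_\varepsilon}\|_{H^1(\Omega)} = \mathcal{O}(\varepsilon^2)$. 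Rewriting
\[
\psi_{u_\varepsilon}(x,z) - \frac{1+z}{1+u_\varepsilon(x)} = \Phi_{u_\varepsilon}\!\left(x, \frac{z+1}{1+u_\varepsilon(x)}\right)
\]
and using that the Jacobian of this change of variables is bounded above and below thanks to Step~(ii), I obtain the first convergence in \eqref{411}. A trace estimate on $\partial_\eta \Phi_{u_\varepsilon}$ along $\{\eta = 1\}$ combined with \eqref{ggg} then gives the convergence of $g_{\varepsilon}(u_{\varepsilon}) - (1+u_{\varepsilon})^{-2}$ to zero in $L_2(I)$.

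Finally, the compact embedding $W_q^2(I) \hookrightarrow\hookrightarrow W_\infty^1(I)$ for $q > 2$, together with the uniform boundedness of $\lambda_\varepsilon$, would allow me to extract a subsequence $\varepsilon_k \to 0$ along which $u_{\varepsilon_k} \to u_0$ in $W_\infty^1(I)$ and $\lambda_{\varepsilon_k} \to \lambda_0$; passing to the limit in \eqref{u1ddstat} with the convergence of $g_{\varepsilon_k}(u_{\varepsilon_k})$ identifies $u_0$ as a classical stationary solution of \eqref{u1SGstat}. The hardest obstacle is Step~(iii): obtaining a sharp $\varepsilon^2$-rate in the corner-containing domain $\Omega(u_\varepsilon)$ requires one to exploit carefully the boundary condition $\partial_x u_\varepsilon(\pm 1) = 0$ when $\beta > 0$, in the same spirit as the anisotropic $H^2$-regularity theory underlying Proposition~\ref{P1}(ii).
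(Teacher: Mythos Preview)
Your overall three-step architecture matches the paper's, and your approach to Step~(iii) via the transformed rectangle is a reasonable alternative to the paper's method of multiplying \eqref{psi1d} directly by $\psi_{u_\varepsilon}$ and $\partial_z\psi_{u_\varepsilon}$ in the moving domain. However, there are genuine gaps in Steps~(i)--(ii).

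First, you address only the \emph{upper} bound on $\lambda_\varepsilon$ via Theorem~\ref{T2}, but the theorem asserts $\lambda_0>0$, which requires a \emph{positive lower bound} on $\lambda_\varepsilon$ independent of $\varepsilon$. You never explain where this comes from. Second, your derivation of the separation $1+u_\varepsilon\ge\delta>0$ is circular: you obtain the $W_q^2$-bound from Proposition~\ref{P1}(iii) \emph{conditional} on the separation (the bound on $g_\varepsilon$ there depends on $\kappa$), and then propose to recover the separation from \eqref{MarcusMiller}--\eqref{b19} using that same $W_q^2$-bound. Concretely, \eqref{MarcusMiller} only says that $\mathcal{E}_e(u_\varepsilon)\to\infty$ if $\min u_\varepsilon\to -1$, so you would need an $\varepsilon$-independent \emph{upper} bound on $\mathcal{E}_e(u_\varepsilon)$; but \eqref{b19} expresses $\mathcal{E}_e(u_\varepsilon)$ in terms of the trace $\partial_z\psi_{u_\varepsilon}(\cdot,u_\varepsilon)$, whose control is precisely what is at stake.

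The paper breaks this circularity by exploiting the \emph{specific construction} of the family $(\lambda_\varepsilon,u_\varepsilon)$ rather than arguing a posteriori. When $\beta=0$, the solutions are produced by a Schauder fixed-point argument that directly yields $\lambda_\varepsilon=\lambda\in(0,\lambda_{\min}]$ fixed and the bounds \eqref{bound2} uniformly in $\varepsilon$. When $\beta>0$, one takes $(\lambda_\varepsilon,u_\varepsilon)=(\lambda_\rho^\varepsilon,u_\rho^\varepsilon)$ from Theorem~\ref{T3} with $\rho>2$ fixed: the constraint $\mathcal{E}_e(u_\rho^\varepsilon)=\rho$ immediately caps the electrostatic energy, and the variational proof of Theorem~\ref{T3} already contains the separation bound \eqref{Bobby} as well as the information needed for the lower bound on $\lambda_\rho^\varepsilon$. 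In short, the uniform estimates in Steps~(i)--(ii) are not derived from general principles applied to arbitrary stationary solutions, but are inherited from the construction; your proposal misses this and should be rewritten to leverage it.
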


Let us emphasize that Theorem~\ref{B} above is valid either when $\beta>0$ \cite{LW_v4} or when $\beta=0$ and $\tau>0$ \cite{LW13}. However, the starting point of the analysis and actually the derivation of properties~(i)--(iii) listed above differ in both cases. In the second-order case $\beta=0$ stationary solutions for small voltage values can also be constructed by means of Schauder's fixed point theorem \cite{LW13} and this approach guarantees in particular a lower bound $\lambda_{min}>0$ for $\lambda_s(\varepsilon)$ independent of $\ve\in (0,1)$. This thus gives a family of stationary solutions $(u_\varepsilon,\psi_{u_\varepsilon})_{\varepsilon\in (0,1]}$ for each $\lambda\in (0,\lambda_{min}]$. Moreover, it yields {\it a priori} bounds on the stationary solutions of the form
\begin{equation}\label{bound2}
-1+\kappa_0\le u_\ve(x)\le 0\ ,\quad x\in I\ ,\qquad \| u_\ve\|_{W_\infty^2}\le \frac{1}{\kappa_0}
\end{equation}
for each $\lambda\in (0,\lambda_{min}]$, where $\kappa_0\in (0,1)$ is independent of $\ve\in (0,1)$. In that case, the family $(\lambda_\varepsilon)_\varepsilon$ in Theorem~\ref{B} is given by $\lambda_\varepsilon=\lambda\in (0,\lambda_{min}]$ and the properties~(i)--(ii) are satisfied. In the fourth-order case $\beta>0$, the outcome of Theorem~\ref{T3} is the starting point. Given $\rho>2$ and $\varepsilon>0$, we set $(\lambda_\varepsilon,u_\varepsilon) := (\lambda_\rho^\ve,u_\rho^\ve)$, the latter being constructed in Theorem~\ref{T3} and depending on $\varepsilon$, and use the variational construction of $u_\varepsilon$ to derive the uniform estimates~(i)--(ii). Property~(iii) follows from \eqref{psi1d} after multiplication by $\psi_{u_\varepsilon}$ and $\partial_z \psi_{u_\varepsilon}$. Note that the regularity of $\psi_{u_\varepsilon}$ is no longer the same in the $x$- and $z$-directions in the limit $\varepsilon\to 0$ since~\eqref{psi1d} becomes degenerate elliptic. Hence, a cornerstone of the proof is to obtain estimates for the trace of $\partial_z \psi_{u_\varepsilon}$ on the upper boundary of $\Omega(u_\ve)$ \cite{LW13, LW_v4}.

\subsection{Stationary Solutions to the Vanishing Aspect Ratio Model}\label{Sec4.2}

The study of stationary solutions to the vanishing aspect ratio model~\eqref{uSG} has a long history, starting with the seminal work \cite{JoL72} on the number of solutions to a broad class of semilinear Dirichlet problems in radial symmetry and then continued to cover second-order MEMS equations 
\begin{equation}\label{SGNoBending}
\begin{split}
-\tau \Delta u =-\,  \frac{\lambda}{(1+u)^2}\ \quad\text{in }\ D \ , \quad u=0 \quad\text{ on }\quad \partial D
\end{split}
\end{equation}
with open domain $D\subset\R^d$ (here $d$ is no longer restricted to $1,2$) and various variants hereof,  including those listed in Section~\ref{Sec2.4}. As expected from physics, there is a critical value $\lambda_0^{stat}>0$ -- depending on the domain's geometry -- such that no stationary solution exists if $\lambda>\lambda_0^{stat}$ and at least one stationary solution exists for $\lambda\in (0,\lambda_0^{stat})$.  More precisely, owing to the monotonicity and convexity of the function $z\mapsto 1/(1+z)^{2}$, an iterative monotone scheme can be set up to construct a branch of stationary solutions $\{ U_\lambda\, ;\, \lambda\in [0,\lambda_0^{stat}) \}$ to \eqref{SGNoBending} with $U_0\equiv 0$ and $U_{\lambda'} < U_\lambda$ in $I$ for all $0\le \lambda < \lambda'<\lambda_0^{stat}$. A consequence of this approach is that, given $\lambda\in (0,\lambda_0^{stat})$, each stationary solution $U_\lambda$ is also a maximal and stable solution in the following sense: Any other solution to \eqref{SGNoBending} with the same value of $\lambda$ lies below $U_\lambda$, and the principal eigenvalue of the associated linearized operator $-\Delta - 2\lambda/(1+U_\lambda)^3$ in $L_2(D)$ is positive. The behavior of $U_\lambda$ as $\lambda\to\lambda_0^{stat}$ as well as a further and more precise characterization of the bifurcation diagram depend strongly on the dimension~$d$. Concerning the former, owing to the monotonicity of the family $\{ U_\lambda\, ;\, \lambda\in [0,\lambda_0^{stat}) \}$ with respect to $\lambda$ and the lower bound $U_\lambda > -1$, the function
$$
U_{\lambda_0^{stat}} := \inf_{\lambda\in (0,\lambda_0^{stat})} U_\lambda \ge -1
$$
is always well-defined but might reach the value $-1$ at some points. However, this cannot occur for $d\in [1,7]$, and $U_{\lambda_0^{stat}}$ is then a classical solution to \eqref{SGNoBending} with $\lambda=\lambda_0^{stat}$, see \cite{GhG06}. If $d\ge 8$, then touchdown points may exist in the sense that $\min U_{\lambda_0^{stat}}=-1$: this is in particular the case when $D$ is the unit ball $\mathbb{B}_1$ of $\R^d$, see \cite{GhG06}. As for the bifurcation diagram, there are exactly two stationary solutions for each $\lambda\in (0,\lambda_0^{stat})$  if $d=1$, see~\cite{JoL72}. For dimensions $d\in [2,7]$ there is a curve $s\mapsto (\Lambda(s),U(s))$ of stationary solutions starting for $s=0$ at $(0,0)$ with infinitely many bifurcation or turning points so that there exists a unique (under additional geometric restrictions on $D$) stationary solution for sufficiently small voltage values~$\lambda$, exactly two stationary solutions for $\lambda$ in a small left neighborhood of $\lambda_0^{stat}$,  one extremal stationary solution at $\lambda=\lambda_0^{stat}$, and multiple stationary solutions for some values of $\lambda$ strictly between zero and $\lambda_0^{stat}$, see \cite{EsG08, EGG07, EGG10}. Additional properties of stationary solutions were investigated in dependence on the dimension of the ambient space and the geometry of the device. Proofs (and numerical evidence) for these results as well as regarding e.g. to estimates on the pull-in voltage or the {\it pull-in distance} (that is, the minimum of $1+U_{\lambda_0^{stat}}$) are to be found in \cite{BGP00, PeC03, PeT01, Pel01a, GPW05, FeZ05, GhG06, EsG08, EGG07, BeP08, CoG10, GuW08c}, see also \cite{EGG10} for an extended review. It is worth mentioning that the comparison principle is available in this case and  a key tool for the analysis in most of the papers cited above. 

\medskip

Less attention has so far been dedicated to the fourth-order problem
\begin{align}
\beta\Delta^2 u -\tau \Delta u =-\,  \frac{\lambda}{(1+u)^2}\ \quad\text{in }\ D \label{SGBending}\\
u=\beta\partial_\nu u=0\ \text{ on }\ \partial D\, . \label{SGBendingBC}
\end{align}
with $\beta>0$. The only result concerning \eqref{SGBending}-\eqref{SGBendingBC} in a general domain $D$ --~hence not relying on the maximum principle~-- seems to be \cite{LiY07}, where stationary solutions have been constructed for small values of~$\lambda$ by Banach's fixed point theorem. 

When restricting to the unit ball $\mathbb{B}_1$ of $\mathbb{R}^d$, $d\ge 1$, so that one may take advantage of the maximum principle in radial symmetry as stated in Proposition~\ref{P4}, similar arguments as in the second-order case provides a threshold value $\lambda_0^{stat}\in (0,\infty)$ such that, for $\lambda>\lambda_0^{stat}$, there is no stationary solution if $\tau=0$ and no radially symmetric stationary solution if $\tau>0$. Furthermore, there is a monotone branch of stable and maximal radially symmetric stationary solutions $\{U_\lambda\, ;\, \lambda\in [0,\lambda_0^{stat})\}$ with $U_0=0$,  see \cite{COG09} for $\tau=0$ and \cite{Boggio, LW14b} for $\tau>0$. The function $U_{\lambda_0^{stat}} := \inf_{\lambda\in (0,\lambda_0^{stat})} U_\lambda$ is again well-defined and its properties are investigated in \cite{CEGM10} when $\tau=0$. In particular, it is shown that the minimum of  $U_{\lambda_0^{stat}}$ in $\mathbb{B}_1$ is above $-1$ for $d\le 8$ so that $U_{\lambda_0^{stat}}$ is a smooth solution to \eqref{SGBending}-\eqref{SGBendingBC} in that case. In contrast, the minimum of  $U_{\lambda_0^{stat}}$ in $\mathbb{B}_1$ is equal to $-1$ if $d\ge 9$. Additional information on the structure of the solution set is provided in \cite{DFG10} for $d\ge 3$.

In dimensions $d=1,2$ more precise information on the bifurcation diagram is available, even when including  $\tau>0$:

\begin{theorem}[Bifurcation Diagram \cite{LW14b}]\label{TSSIntroduction}
Let $\beta>0$, $\tau\ge 0$, and consider $D=\mathbb{B}_1$ with $d\in \{1,2\}$. There exists $\lambda_0^{stat}>0$ such that there is no radially symmetric solution to \eqref{SGBending}-\eqref{SGBendingBC} for $\lambda>\lambda_0^{stat}$, at least one radially symmetric solution for $\lambda=\lambda_0^{stat}$, and at least two radially symmetric solutions for $\lambda\in (0,\lambda_0^{stat})$, a stable and an unstable one. These solutions together with the corresponding values  of $\lambda$ lie on an analytic curve and, as $\lambda$ approaches zero, the unstable solution converges to an explicitly computable function $\omega$ with minimum equal to $-1$.
\end{theorem}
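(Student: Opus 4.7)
The plan is to construct a globally defined analytic curve of radial stationary solutions starting from $(\lambda,u)=(0,0)$ and then to identify its two endpoints. Since the nonlinearity $u\mapsto -1/(1+u)^2$ is real-analytic for $u>-1$ and the operator $A:=\beta\Delta^2-\tau\Delta$ with clamped boundary conditions on $\mathbb{B}_1$ is invertible (with non-negative inverse via Proposition~\ref{P4}), the analytic implicit function theorem yields a local analytic branch $\lambda\mapsto U_\lambda$ of radial solutions with $-1<U_\lambda<0$ for $\lambda$ in a right neighborhood of zero. I would then extend this branch by the analytic continuation theorem of Dancer/Buffoni--Toland to a maximal analytic curve $s\mapsto (\lambda(s),u(s))$, where a convenient parameter is $s:=-u(s)(0)\in (0,1)$ in the radially symmetric setting.

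Next, I would derive \emph{a priori} bounds and non-existence for large $\lambda$ along the curve. Testing the equation against the positive radial eigenfunction $\zeta_1$ from Proposition~\ref{P4}(iii), integrating by parts twice, and using the pointwise bound $(1+u)^2\le 1$ (which follows from $u\le 0$) yields the identity $\mu_1 \int u\,\zeta_1\,\mathrm{d}x = -\lambda\int \zeta_1/(1+u)^2\,\mathrm{d}x$, hence $\lambda<\mu_1$. Set $\lambda_0^{stat}:=\sup\{\lambda(s)\}$; it is finite and attained at the first fold of the curve. The stable/unstable dichotomy is controlled by the principal eigenvalue $\nu(s)$ of the linearization $A+2\lambda(s)(1+u(s))^{-3}$, which is positive near $s=0$ by analyticity and monotonicity of the construction; it must vanish at the fold $s=s_*$ where $\lambda(s_*)=\lambda_0^{stat}$, and becomes negative past $s_*$, thereby producing the unstable branch and giving at least two solutions for each $\lambda$ slightly below $\lambda_0^{stat}$.

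The main obstacle, and the step that requires $d\in\{1,2\}$, is to show that the curve can be continued all the way to its limit $s\to 1^-$ with $\lambda(s)\to 0$ and $u(s)$ converging to the target function $\omega$. Compactness along bounded subintervals of existence follows from elliptic regularity applied to the equation; the delicate point is to preclude premature accumulation, which would require a secondary bifurcation. Once touchdown is identified as the only singular scenario, I would pass to the limit and show that the right-hand side $\lambda(s)/(1+u(s))^2$ concentrates to a Dirac mass at the origin, so that $\omega$ solves
\begin{equation*}
\beta\Delta^2\omega-\tau\Delta\omega=\Lambda\,\delta_0\ \text{ in }\ \mathbb{B}_1\,,\qquad \omega=\partial_\nu\omega=0\ \text{ on }\ \partial\mathbb{B}_1\,,
\end{equation*}
with $\Lambda>0$ uniquely fixed by the normalization $\omega(0)=-1$. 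In low dimensions the Green function $G(\cdot,0)$ of $A$ is continuous and strictly positive at the origin (the fourth-order analogue of the $d\le 7$ threshold recalled in Section~\ref{Sec4.2}), so that $\omega=-G(\cdot,0)/G(0,0)$ is well-defined and explicit; by the radial monotonicity of $G(\cdot,0)$ its minimum is attained at the origin and equals $-1$. In the one-dimensional case with $\tau=0$ this even gives the cubic $\omega(x)=-1+3x^2-2|x|^3$. The hardest part, in my view, is excluding accumulation of the curve in the interior of the $(\lambda,u)$-region and turning this compactness analysis into a clean parametrization by $s\in[0,1)$ all the way up to the touchdown limit.
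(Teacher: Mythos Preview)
Your strategy coincides with the paper's: local branch via the implicit function theorem, global continuation via analytic bifurcation theory of Buffoni--Toland type, and identification of the endpoint of the curve as $\lambda\to 0$. A few points of comparison are worth flagging. First, the paper parametrizes the global curve by an abstract parameter $s\in[0,\infty)$ (arc length), not by $-u(0)\in(0,1)$; your choice tacitly assumes that the minimum value is strictly monotone along the entire curve, which is not delivered by the global continuation machinery and would itself require proof. Second, to pin down the fold at $\lambda_0^{stat}$ and to guarantee that the curve genuinely turns back there, the paper does not rely solely on the sign of the linearized principal eigenvalue: it first shows that the monotone iterative scheme (available via Proposition~\ref{P4}) produces the stable maximal branch, proves that the extremal solution $U_{\lambda_0^{stat}}$ is classical in low dimension, identifies this branch with the initial piece $\mathcal{A}_0$ of the analytic curve, and then invokes the Crandall--Rabinowitz bifurcation-from-a-simple-eigenvalue result to continue beyond the fold. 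Third, the paper characterizes $\omega$ as the explicit solution of a boundary value problem for $A$ in the punctured ball $\mathbb{B}_1\setminus\{0\}$ subject to the constraint $\omega(0)=-1$; this is equivalent to your Green-function description, but note the sign in your displayed equation should be $A\omega=-\Lambda\delta_0$, consistently with the negative right-hand side of \eqref{SGBending} and with your own formula $\omega=-G(\cdot,0)/G(0,0)$. You have correctly identified the hard step---ruling out interior accumulation of the curve and establishing that it terminates with $\Lambda(s)\to 0$ and touchdown at the origin---which is also singled out in the paper as the most intricate part.
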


Theorem~\ref{TSSIntroduction} for $\tau\ge 0$ gives a positive answer to the question of existence stated in \cite{EGG10} of at least two solutions for each $\lambda\in (0,\lambda_0^{stat})$ in dimension $d=2$. When $\tau=0$ the existence of at least two solutions for each $\lambda\in (0,\lambda_0^{stat})$ is also shown in \cite{GLY14} in the two-dimensional case $d=2$. A second solution different from the stable one is constructed therein by the Mountain Pass Principle. Its behavior as $\lambda\rightarrow 0$  is studied and shown to coincide with the one described above. Let us also point out that the asymptotic shape $\omega$ was found in \cite{LiW11} but by formal asymptotic expansions.

Theorem~\ref{TSSIntroduction} relies on the construction of an analytic curve $s\mapsto (\Lambda(s), U(s))$ in $\R\times C^4(\mathbb{B}_1)$ defined for $s\in [0,\infty)$ such that $U(s)$ is for each $s\in [0,\infty)$ a radially symmetric solution to \eqref{SGBending}-\eqref{SGBendingBC} with $\lambda=\Lambda(s)$. The curve emanates from  $(\Lambda(0), U(0))=(0, 0)$ with $(\Lambda(s), U(s))\rightarrow (0,\omega)$ as $s\rightarrow \infty$, where the end point $\omega$ is given as a solution of a boundary value problem in $\mathbb{B}_1\setminus\{0\}$ which can be computed explicitly. A plot of $\omega$ is shown in Figure~\ref{fig1}, its qualitative behavior being the same for $d=1$ and $d=2$. 
\begin{figure}[h]
\centering\includegraphics[width=6cm]{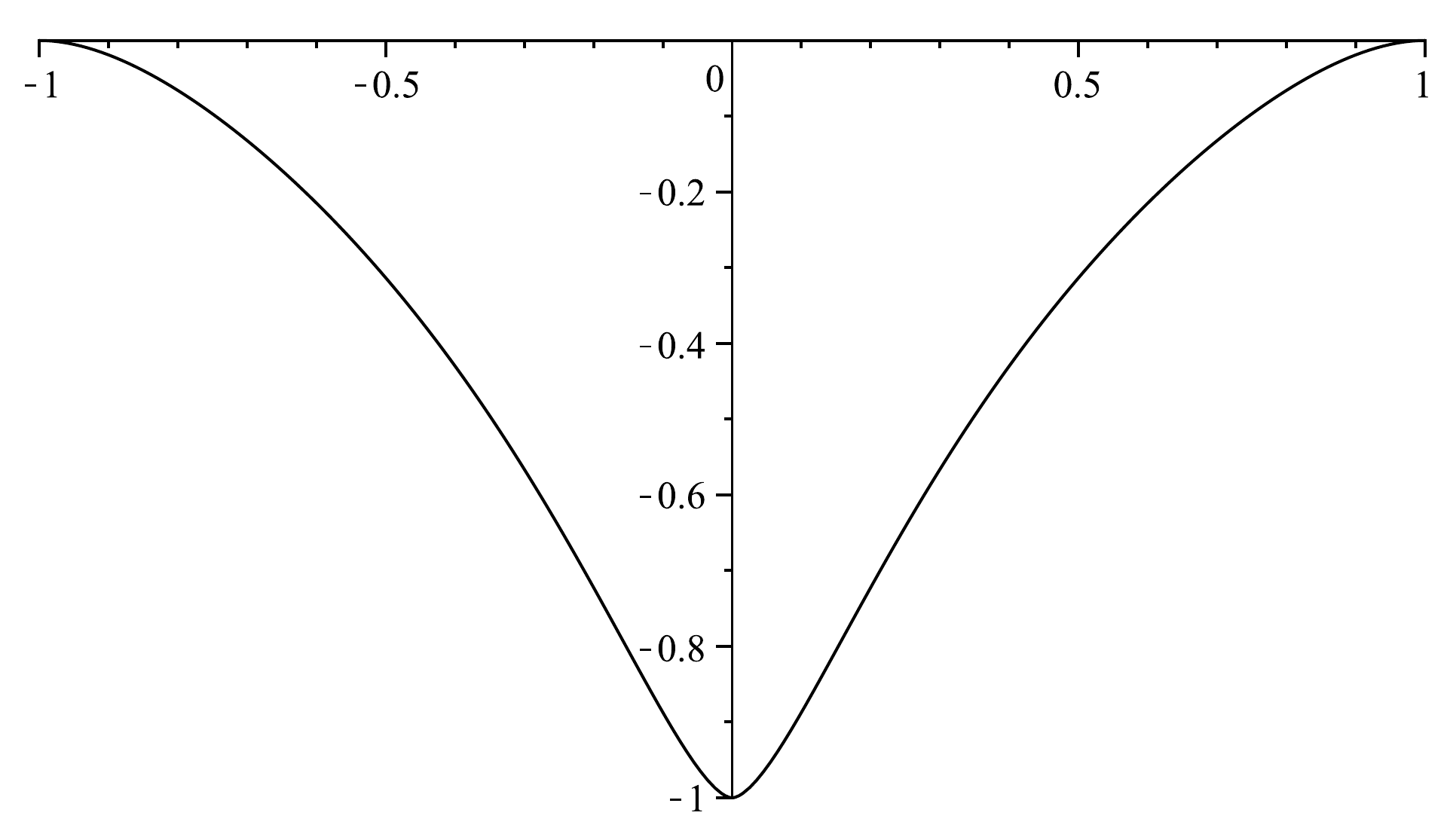}
\caption{\small Plot of the endpoint $\omega$ of the curve $(\Lambda(s), U(s))$ for $d=2$ and $(\beta,\tau)=(1,50)$.}\label{fig1}
\end{figure}
In addition, there is $s_*>0$ such that $\Lambda$ is an increasing function from $[0,s_*]$ onto $[0,\lambda_0^{stat}]$ and $\Lambda$ is decreasing in a right-neighborhood of $s_*$ with $\Lambda(s)\rightarrow 0$ as $s\rightarrow \infty$. Consequently, given $\lambda\in (0,\lambda_0^{stat})$ there are indeed two values $0<s_1<s_*<s_2$ with $\Lambda(s_j)=\lambda$ for $j=1,2$ and $U(s_2)\le U(s_1)$ in $\mathbb{B}_1$ with $U(s_2)\not= U(s_1)$.

The proof of Theorem~\ref{TSSIntroduction} is performed in several steps. The implicit function theorem provides a smooth branch $\mathcal{A}_0$ of radially symmetric solutions $(\lambda,u)$ to~\eqref{SGBending}-\eqref{SGBendingBC} emanating from $(0,0)$ which can be extended to a global curve $\mathcal{A}$ by means of bifurcation theory for real analytic functions \cite{BDT00}, a tool that was also used in \cite[Sect.~6.2]{EGG10} for the second-order case $\beta=0$. It can then be shown  that the branch $\{ (\lambda,U_\lambda)\, ;\, \lambda\in [0,\lambda_0^{stat})\}$ of stable and maximal solutions to \eqref{SGBending}-\eqref{SGBendingBC} constructed by the iterative monotone scheme described above actually coincides with $\mathcal{A}_0$. {\it A priori} estimates and the fact that $U_\lambda$ is decreasing in $\lambda$ entail that a radially symmetric stationary solution $U_{\lambda_0^{stat}}$ exists also at $\lambda=\lambda_0^{stat}$, which guarantees, on the one hand, that $\mathcal{A}_0\not=\mathcal{A}$ and, on the other hand, that the bifurcation result of \cite{CrR73} applies to extend the branch $\mathcal{A}_0$ beyond $(\lambda_0^{stat}, U_{\lambda_0^{stat}})$. The final and most intricate step is to show that~$\mathcal{A}$ connects  $(0,0)$ to the end point $(0,\omega)$ and to identify the latter. As a consequence, the continuous curve $\mathcal{A}$ passes through  $(\lambda_0^{stat},U_{\lambda_0^{stat}})$.

\begin{rems}
\begin{enumerate}
\item When $\tau>0$ the maximum principle is only valid for radially symmetric functions by Proposition~\ref{P4} and we can only guarantee the non-existence of radially symmetric stationary solutions to \eqref{SGBending}-\eqref{SGBendingBC} for $\lambda>\lambda_0^{stat}$. It is possible (though unlikely) that non-radially symmetric stationary solution exist above this threshold value.
\item The qualitative bifurcation diagram is expected to be completely different in higher space dimensions $d\ge 3$ as pointed out in \cite{DFG10, EGG10}.
\end{enumerate}
\end{rems}

For numerical results on \eqref{SGBending}-\eqref{SGBendingBC} and investigations based on formal asymptotic methods we refer to \cite{LiW11, LiW08, KLW11}. In particular, by means of a shooting argument one can numerically verify that \eqref{SGBending}-\eqref{SGBendingBC} admits no other solutions than found in Theorem~\ref{TSSIntroduction} \cite{LiW08}.

\medskip

We end this section with a few words about \eqref{SGBending} when it is supplemented with pinned boundary conditions \eqref{bcubcuPinned}, that is,
\begin{equation}
\beta\Delta^2 u -\tau \Delta u =-\,  \frac{\lambda}{(1+u)^2}\ \quad\text{in }\ D\ , \quad u=\beta\Delta u=0\ \text{ on }\ \partial D\, , \label{Bach}
\end{equation}
with $\beta>0$ and $\tau\ge 0$. Interestingly, the maximum principle  holds in arbitrary smooth domains \cite{EGG10, GGS10} and allows one at least in part to show similar results for the fourth-order problem as outlined above for the second-order case. In particular, there is again $\lambda_0^{stat}>0$ such that no solution to \eqref{Bach} exists for $\lambda>\lambda_0^{stat}$ while there is a branch of stable maximal solutions $\{U_\lambda\,;\,\lambda\in (0,\lambda_0^{stat})\}$ obtained from a monotone iterative scheme such that $U_\lambda$ is  decreasing in~$\lambda$, see \cite{LiY07}. If $d\le 3$, then the minimum of $U_{\lambda_0^{stat}} := \inf_{\lambda\in (0,\lambda_0^{stat})} U_\lambda$ is greater than $-1$ and $U_{\lambda_0^{stat}}$ is the unique classical solution to \eqref{SGBending} subject to \eqref{bcubcuPinned}, see \cite{GuW08a}. A similar result is available in \cite{CEG10, CoG14} when $\tau=0$ and $d\le 6$. In the particular case of $D$ being a ball, the extremal solution is still regular in the sense that $\inf_{D}U_{\lambda_0^{stat}} >-1$  for dimension $d\le 8$ \cite{CEGM10, Mor10} while a touchdown in the sense of $\inf_{D}U_{\lambda_0^{stat}} =-1$ occurs for $d\ge 9$ (provided $\tau/\beta$ is small) \cite{Mor10}. If $D$ is a two-dimensional convex domain, then there is a second stationary solution for $\lambda\in (0,\lambda_0^{stat})$ constructed by the Mountain Pass Principle that approaches $-1$ as $\lambda\rightarrow 0$ \cite{GuW08a, GLY14}. If $D$ is the three-dimensional unit ball, then \eqref{Bach} with $\tau=0$ has infinitely many fold points for the bifurcation branch corresponding to radially symmetric solutions \cite{GuW08b}. For more precise and further results for \eqref{Bach} we refer to the abovementioned references \cite{GuL11, GuW08a, EGG10, LiY07, CEG10, GuW08b, Mor10, CEGM10, CoG14, GLY14} as well as to \cite{Nee09, CFT11}.

\section{Evolution Problem}\label{S5}

We next turn to the dynamical free boundary MEMS equations~\eqref{psi1d}-\eqref{icu1d}. Let us recall that it is expected on physical grounds that there is a (dynamical) pull-in voltage value determining the dynamics of the device in the following sense: Solutions corresponding to values of $\lambda$ below this threshold are global in time, while for values above the threshold solutions cease to exist globally as the elastic plate pulls in at some finite time $T_m<\infty$, i.e., 
\begin{equation}
\lim_{t\to T_m} \big(\min_{x\in I}\, u(t,x) \big) = -1\ . \label{PLtouchdown}
\end{equation}
Rigorous proofs for such a  qualitative behavior  with a sharp threshold are obviously non-trivial, particularly for the free boundary problem \eqref{psi1d}-\eqref{icu1d}. Nevertheless, some of the described behavior is established. As we shall see in the next sections, the fourth-order case $\beta>0$ and the second-order case $\beta=0$ are somewhat different with respect to what is proven so far for large voltage values. In the former case, non-global solutions necessarily exhibit the touchdown behavior described in \eqref{PLtouchdown} but it is not known (though expected) that such solutions indeed exist for large $\lambda$. In the latter case, solutions cannot exist globally in time for large $\lambda$ but may exhibit also an alternative singularity in form of a norm blow up.
Roughly speaking, the reason for this difference is that the fourth-order case is technically more involved, but comes along with a higher regularity. 

We first consider in Section~\ref{Sec5.1} the local well-posedness and then in Sections~\ref{Sec5.1.5}-\ref{Sec5.2.5} questions related to global existence. The dependence of the solutions on some parameters is then considered: in Section~\ref{Sec5.3} the damping dominated limit $\gamma\rightarrow 0$ is studied and in Section~\ref{Sec5.4} the vanishing aspect ratio limit $\ve\rightarrow 0$.

\subsection{Local Existence}\label{Sec5.1}

We recall that one can formulate~\eqref{psi1d}-\eqref{icu1d} as a single  nonlocal problem of the form~\eqref{u1dd}  only involving $u$, the corresponding electrostatic potential $\psi=\psi_u$ being determined with the help of Proposition~\ref{P1}. Specifically, recalling that $g_\ve(u)$ is the square of the trace of the (rescaled) gradient of the solution $\psi_u$ to \eqref{psi1d}-\eqref{bcpsi1d} on the upper boundary of $\Omega(u)$, see \eqref{gg}, the function $u$ solves
\begin{equation}\label{rapu1d}
\gamma^2\partial_t^2 u+\partial_t u +\beta\partial_x^4 u- \tau \partial_x^2 u =-\lambda\, g_\ve(u)\ , \quad t>0\ , \quad x\in I
\end{equation}
with clamped boundary conditions
\begin{align}
u(t,\pm 1) &=\beta\partial_x u(t,\pm 1)=0\ , \quad t>0 \ , \label{rapbcu1d}
\end{align}
and zero initial conditions
\begin{align}
u(0,x)&= \gamma \partial_tu(0,x)=0\ ,\qquad x\in I\ ,\label{rapicu1d}
\end{align}
the latter only assumed for a simpler exposition. To discuss the solvability of \eqref{rapu1d}-\eqref{rapicu1d} we fix $2\alpha \in (0,1/2)$ and introduce the operator 
$$
A_\alpha :=\beta \partial_x^4 - \tau \partial_x^2\in\mathcal{L}\big(H_D^{2\ell+2\alpha}(I),H_D^{2\alpha}(I)\big)
$$ 
with $\ell=2$ if $\beta>0$ and $\ell=1$ if $\beta=0$ and $\tau>0$. 

\medskip

Let us first consider the case $\beta>0$ and $\gamma>0$. We reformulate~\eqref{rapu1d}-\eqref{rapicu1d} as a hyperbolic nonlocal semilinear Cauchy problem for  ${\bf u}=(u,\gamma\partial_tu)^T$ of the form
\begin{equation}\label{CPhyp}
\gamma {\bf u}'+\mathbb{A}_\alpha {\bf u}= -\lambda f({\bf u})\ ,\quad t>0\ ,\qquad {\bf u}(0)={\bf 0}
\end{equation}
with ${\bf u}'$ indicating the time derivative of ${\bf u}$. Hereby, the matrix operator
$$
\mathbb{A}_\alpha:=\left(
\begin{matrix} 
0 & -1\\
 & \\
A_\alpha &  1/\gamma
\end{matrix}
\right)
$$
with domain  $D(\mathbb{A}_\alpha):=H_D^{4+2\alpha}(I)\times H_D^{2+2\alpha}(I)$ generates a strongly continuous group $e^{-t\mathbb{A}_\alpha}$, $t\in\R$, on the Hilbert space $\mathbb{H}_\alpha:=H_D^{2+2\alpha}(I)\times H_D^{2\alpha}(I)$ with an exponential decay, the latter being due to the damping term $\partial_t u$ in \eqref{rapu1d} (e.g. see the cosine function theory in \cite[Sections~5.5 \&~5.6]{Are04}). Since $H_D^{2+2\alpha}(I)$ is continuously embedded in $W_{q,D}^2(I)$ for some $q>2$ and $\alpha\in (0,1/4)$, Proposition~\ref{P1} entails that the function
$$
f:S_2^{2+2\alpha}(\kappa)\times H_D^{2\alpha}(I)\rightarrow \mathbb{H}_\alpha
$$ 
with
\begin{equation*}\label{f1}
f({\bf u}):=\left(
\begin{matrix} 
0\\ 
 g_\ve(u)
\end{matrix}\right)\,,\quad {\bf u}\in S_2^{2+2\alpha}(\kappa)\times H_D^{2\alpha}(I)\,,
\end{equation*}
is bounded and uniformly Lipschitz continuous when $\kappa\in (0,1)$. A classical application of the contraction mapping principle then yields a unique strong solution ${\bf u}\in C([0,T],\mathbb{H}_\alpha)$ to~\eqref{CPhyp} with ${\bf u}'\in L_{1}(0,T;\mathbb{H}_\alpha)$ on a (possibly small) time interval~$[0,T]$, which can be continued as long as $\|{\bf u}(t)\|_{\mathbb{H}_\alpha}$ stays bounded and $u(t)>-1$. This in particular yields a criterion for global existence. 

Let us point out that, unfortunately, this approach does not work in the second-order case $\beta=0$ with $\gamma>0$ in \eqref{rapu1d}-\eqref{rapicu1d} since the right-hand side $g_\ve$ is -- roughly speaking -- of order slightly above one (see Proposition~\ref{P1}) and thus exceeds half of the operator's order.

\medskip

The just described difficulty does not occur if \mbox{$\gamma=0$} due to parabolic smoothing effects. Indeed, in this case we may consider $\beta>0$ and $\beta=0$ (with $\tau>0$ then) and handle equation \eqref{rapu1d}-\eqref{rapicu1d} as a parabolic nonlocal Cauchy problem in $H_D^{2\alpha}(I)$ of the form
$$
u' +A_\alpha u=-\lambda g_\ve(u)\,,\quad t>0\, ,\qquad u(0)=0
$$
possessing a classical solution in $C\big([0,T_m),H_D^{2\ell+2\alpha}(I)\big)\cap C^1\big([0,T_m),H_D^{2\alpha}(I)\big)$. 

In summary, we have the following result regarding the local well-posedness of \eqref{rapu1d}-\eqref{rapicu1d}:

\begin{theorem}[Local Existence \cite{LW14a,ELW14}]\label{T6}
Assume that either $\beta>0$, $\gamma\ge 0$, $\tau\ge 0$ or $\beta=\gamma = 0$, $\tau>0$ and consider $\lambda>0$.

\begin{itemize}
\item[(i)] There is a unique (smooth) solution $(u,\psi_u)$ to \eqref{rapu1d}-\eqref{rapicu1d} on a maximal interval of existence $[0,T_{m})$ such that
$$
u(t,x)>-1\ ,\quad (t,x)\in [0,T_m)\times I\ . 
$$ 

\item[(ii)] If the solution does not exist globally in time, that is, if $T_m<\infty$, then a touchdown singularity \eqref{PLtouchdown} occurs or some Sobolev norm of $\big(u(t),\gamma\partial_t u(t)\big)$ blows up as \mbox{$t\rightarrow T_m$}.
\end{itemize}
\end{theorem}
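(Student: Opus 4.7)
The plan is to absorb the electrostatic potential into the nonlocal term $g_\ve(u)$ via Proposition~\ref{P1} and treat \eqref{rapu1d}--\eqref{rapicu1d} as an abstract Cauchy problem for $u$ alone. Local well-posedness then comes from the contraction mapping principle applied to the Duhamel formula associated with the appropriate semigroup, while the continuation criterion in~(ii) follows from the standard maximality argument.

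In the hyperbolic case $\beta>0$, $\gamma>0$, I fix $\alpha\in(0,1/4)$ so that $H^{2+2\alpha}_D(I)$ embeds continuously into $W^2_{q,D}(I)$ for some $q>2$, and rewrite the system as the first-order problem $\gamma {\bf u}' + \mathbb{A}_\alpha {\bf u} = -\lambda f({\bf u})$ with ${\bf u}(0)=0$. As observed in the text preceding the theorem, $\mathbb{A}_\alpha$ generates a strongly continuous group on $\mathbb{H}_\alpha$ with exponential decay (thanks to the damping term $\partial_t u$), and Proposition~\ref{P1}(iii) furnishes, for each $\kappa\in(0,1)$, a bounded and uniformly Lipschitz nonlinearity $f\colon \big(S_2^{2+2\alpha}(\kappa)\times H^{2\alpha}_D(I)\big)\to \mathbb{H}_\alpha$. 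A contraction argument on a small closed ball of $C([0,T],\mathbb{H}_\alpha)$, with $T$ chosen small enough depending on $\kappa$ and on the Lipschitz constant of $g_\ve$, then yields a unique mild solution, which is strong by the regularity of the semigroup. In the parabolic case $\gamma=0$ I replace the group by the analytic semigroup generated by $A_\alpha$ on $H^{2\alpha}_D(I)$ (or $W^{2\alpha}_{p,D}(I)$ when $\beta=0$); the parabolic smoothing allows the same Banach fixed-point scheme to go through and additionally covers the second-order case $\beta=0$, $\tau>0$, which is excluded from the hyperbolic framework. Positivity of $1+u$ at the endpoint of the short existence interval is preserved by continuity from the initial value $u(0)=0$.

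For part~(ii), the key observation is that the length of the time interval produced by the contraction depends only on a lower bound $\kappa$ for $1+\min_I u$ and an upper bound for $\|{\bf u}\|_{\mathbb{H}_\alpha}$ (respectively $\|u\|_{H^{2+2\alpha}_D(I)}$ when $\gamma=0$). Consequently, if $T_m<\infty$ but neither \eqref{PLtouchdown} occurs nor a norm blow-up takes place, then $u(t)$ lies in some $S_2^{2+2\alpha}(\kappa_0)$ uniformly in $t<T_m$, and one can restart the scheme at any $t^\star$ sufficiently close to $T_m$ to prolong the solution beyond $T_m$, contradicting the maximality of $[0,T_m)$.

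The main technical obstacle is matching the regularity output of Proposition~\ref{P1}(iii), which gives $g_\ve\colon S_q^2(\kappa)\to H^\sigma(I)$ for $\sigma<1/2$, with the functional framework in which the semigroup acts: one must pick $\alpha$ small enough that the embedding $H^{2+2\alpha}_D(I)\hookrightarrow W^2_{q,D}(I)$ holds with some $q>2$, while also ensuring $2\alpha<\sigma<1/2$. This balance is exactly what forces the exclusion of $\beta=0$ with $\gamma>0$: the effective order of $g_\ve$ then exceeds half the order of the principal part $-\tau\partial_x^2$, so the nonlinearity cannot be controlled in the hyperbolic energy space, and the Lipschitz estimate underlying the contraction argument fails.
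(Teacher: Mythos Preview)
Your proposal is correct and follows essentially the same approach as the paper: the discussion preceding Theorem~\ref{T6} already outlines precisely this strategy---reformulation via Proposition~\ref{P1} as a semilinear Cauchy problem, the group generated by $\mathbb{A}_\alpha$ on $\mathbb{H}_\alpha$ in the hyperbolic case $\beta>0$, $\gamma>0$, the analytic semigroup in the parabolic case $\gamma=0$, and the contraction mapping principle combined with the standard continuation argument. Your identification of the regularity mismatch that excludes $\beta=0$, $\gamma>0$ also matches the paper's remark that in this case the order of $g_\ve$ exceeds half the order of the principal part.
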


As pointed out above, the regularity of the solution depends on the particular case one considers, that is, on whether or not the parameters $\beta$ and $\gamma$ are zero. For further use in Sections~\ref{Sec5.3} and~\ref{Sec5.4} let us emphasize that the solution $(u,\psi_u)$ as well as the maximal existence time $T_m$ depend on both values $\ve>0$ and $\gamma\ge 0$.

\subsection{Refined Criterion for Global Existence}\label{Sec5.1.5}

Clearly, the outcome of Theorem~\ref{T6}~(ii) is not yet satisfactory as it does not allow one to figure out whether the occurring finite time singularity is a touchdown {\it or} a norm blowup of $u(t)$ as  \mbox{$t\rightarrow T_m<\infty$}, the latter being though a non-observed phenomenon in real world applications. 

We shall thus address next the preclusion of a norm blowup for non-global solutions. This requires {\it a priori} estimates on solutions for which a promising source may be the gradient flow structure of \eqref{rapu1d}-\eqref{rapicu1d} pointed out in Section~\ref{Sec3.2}. However, when $\beta=0$ the best estimates one can hope for coming from the functional $\mathcal{E}$ in~\eqref{EE} is an $H^1(I)$-estimate on $u(t)$ (by definition of the mechanical energy $\mathcal{E}_m$) which seems to be too weak, unfortunately,  as can be seen from \eqref{MarcusMiller}. In contrast, the additional regularity stemming from the  fourth-order bending term $\beta\partial_x^4$ is more promising as it leads to sufficient $H^2(I)$-estimates on $u(t)$. The drawback when including fourth-order bending is, though, that no (dynamical) comparison principle is available as noticed in Section~\ref{Sec3.2} and so even a preliminary upper bound on $u$ is not obvious (again in contrast to the second-order case $\beta=\gamma=0$ where the comparison principle applied to {\eqref{rapu1d} immediately implies the bound  $u(t)\le 0$ since $g_\ve(u)>0$). A further difficulty encountered in deriving {\it a priori} estimates is the non-coercivity of the corresponding functional $\mathcal{E}$ from \eqref{EE}. We now give more details on how to overcome these issues.

The strategy to prove that necessarily the touchdown singularity \eqref{PLtouchdown} occurs in the fourth-order case $\beta>0$ when $T_m<\infty$ is the following: Supposing that there are $T>0$ and $\kappa\in (0,1)$ such that 
$$
u(t,x) \ge -1 + \kappa \;\;\text{ for }\;\; (t,x) \in ([0,T]\cap [0,T_{m})) \times I\ ,
$$
Theorem~\ref{T6}~(ii) implies $T_m\ge T$ provided one can control a suitable Sobolev norm of $(u(t), \gamma \partial_t u(t))$, see the discussion in Section~\ref{Sec5.1}. To find such a bound one first observes that -- even though there is no {\it a priori} upper bound on $u$ as pointed out above -- a weighted $L_1$-bound of the form 
\begin{equation*}
\int_{-1}^1 \zeta_1(x) |u(t,x)|\ \mathrm{d}x \le c_0\, ,\quad t\in [0,T_m)\,, \label{rc20.b}
\end{equation*}
for sufficiently small $\gamma\ge 0$ can be derived, where $\zeta_1$ is the positive eigenfunction of the operator $\beta \partial_x^4 - \tau \partial_x^2$  subject to clamped boundary conditions provided by Proposition~\ref{P4}. This estimate is obtained by testing equation~\eqref{rapu1d} against $\zeta_1$ and combining the outcome of the computation with the lower bound on $u$. Using the estimate~\eqref{MarcusMiller}  on the electrostatic energy $\mathcal{E}_e(u(t))$  together with an interpolation inequality and a variant of Poincar\'e's inequality, one deduces
\begin{equation*}
\begin{split}
\mathcal{E}_e(u(t)) &=\int_{\Omega(u)} \left[ \varepsilon^2 |\partial_x \psi_u(t,x,z)|^2 + |\partial_z \psi_u(t,x,z)|^2 \right]\ \mathrm{d}(x,z)\\
&\le \int_{-1}^1 \frac{1 + \varepsilon^2 |\partial_x u(t,x)|^2}{1+u(t,x)}\ \mathrm{d}x\\
&\le c(\kappa)\left( 1+ \delta \|\partial_x^2 u(t)\|_{2}^2 +c(\delta)\left(\int_{-1}^1 \zeta_1(x) |u(t,x)|\ \mathrm{d}x \right)^2\right)\ 
\end{split}
\end{equation*}
for $\delta\in (0,1)$ and $t\in [0,T]\cap [0,T_m)$. Recognizing part of the mechanical energy $\mathcal{E}_m(u(t))$ on the right-hand side of this inequality,  the weighted $L_1$-estimate on $u$ from above and Corollary~\ref{C1} yield (for sufficiently small~$\delta>0$) 
$$
\frac{1}{2} \mathcal{E}_m(u(t)) - c(\kappa) \le \mathcal{E}(u(t))
\le \mathcal{E}(0)\, , \quad t\in [0,T]\cap [0,T_m)\,, 
$$
hence
$$
\|u(t)\|_{H^2(I)} \le c(\kappa)\, , \quad t\in [0,T]\cap [0,T_m)\, .
$$
Applying Proposition~\ref{P1}~(ii) then gives a suitable bound on $g_\ve(u(t))$ and thus on the right-hand side of the Cauchy problem~\eqref{CPhyp}, what readily 
yields the desired {\it a priori} bound on $(u, \gamma \partial_t u)$. Consequently, if $T_m<\infty$, then \eqref{PLtouchdown} occurs. 

Summarizing, we have the following result regarding global existence for the fourth-order problem.

\begin{theorem}[Criterion for Global Existence when $\beta>0$ \cite{LW14a}]\label{T7}
Let $\beta>0$ and $\tau\ge 0$. There is $\gamma_0>0$ such that if $\gamma\in [0,\gamma_0)$, then $T_m<\infty$ implies the touchdown behavior \eqref{PLtouchdown}. 
\end{theorem}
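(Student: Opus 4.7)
My plan is to argue by contrapositive: assume there exist $T>0$ and $\kappa\in(0,1)$ such that $u(t,x)\ge -1+\kappa$ for all $(t,x)\in\big([0,T]\cap[0,T_m)\big)\times I$, and show that then $T_m\ge T$. By the blowup alternative in Theorem~\ref{T6}(ii), this amounts to ruling out a Sobolev norm blowup of $\big(u(t),\gamma\partial_t u(t)\big)$ on $[0,T]\cap[0,T_m)$. Once this contrapositive is in place, a solution that fails to be global must in particular fail the lower-bound hypothesis, which is exactly the touchdown statement \eqref{PLtouchdown}.

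The first and crucial step is to produce, \emph{without} an a priori upper bound on $u$, a weighted $L_1$-control
\begin{equation*}
\int_{-1}^1 \zeta_1(x)\,|u(t,x)|\,\mathrm{d}x \le c_0,\qquad t\in[0,T]\cap[0,T_m),
\end{equation*}
where $\zeta_1>0$ is the principal eigenfunction from Proposition~\ref{P4}(iii). I would test \eqref{rapu1d} against $\zeta_1$, integrate over $I$, use the clamped boundary conditions together with $\beta\partial_x^4\zeta_1 - \tau\partial_x^2\zeta_1 = \mu_1\zeta_1$, and integrate in time. The sign of $g_\ve(u)\ge 0$ handles the forcing term. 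The hyperbolic term $\gamma^2\partial_t^2 u$ produces contributions $\gamma^2\langle \zeta_1,\partial_t u\rangle$-type remainders that need to be absorbed; this is where smallness of $\gamma$ enters, giving the threshold $\gamma_0$. Writing $u = -(1+\kappa) + (1+\kappa+u)$ and exploiting that the second summand is nonnegative by the lower bound on $u$ converts the weighted integral of $|u|$ into the weighted integral of $u$, which the eigenfunction test actually controls. I expect this is the main obstacle, since it is the only place in the proof where the absence of a maximum principle for the fourth-order equation bites and where $\gamma$ has to be restricted.

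With the weighted $L_1$-bound in hand, I would apply the right-hand inequality in \eqref{MarcusMiller} together with an interpolation/Poincar\'e argument: for every $\delta\in(0,1)$ there is $c(\delta,\kappa)>0$ such that
\begin{equation*}
\mathcal{E}_e(u(t)) \le \int_{-1}^1\frac{1+\ve^2|\partial_x u(t,x)|^2}{1+u(t,x)}\,\mathrm{d}x \le c(\kappa)\,\Big(1+\delta\|\partial_x^2 u(t)\|_2^2 + c(\delta)\,c_0^2\Big).
\end{equation*}
Multiplying by $\lambda$ and combining with the energy identity \eqref{rc2b} from Corollary~\ref{C1}, which furnishes $\mathcal{E}_m(u(t))\le \mathcal{E}(u(t))+\lambda\mathcal{E}_e(u(t))\le \lambda\mathcal{E}_e(u(t))$ (since $\mathcal{E}(0)=0$ and the kinetic/dissipation terms are nonnegative), I can choose $\delta$ small enough to absorb $\delta\|\partial_x^2 u\|_2^2$ into $\mathcal{E}_m(u(t))$ and obtain
\begin{equation*}
\|u(t)\|_{H^2(I)}\le C(\kappa),\qquad t\in[0,T]\cap[0,T_m).
\end{equation*}

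Finally, I would upgrade this $H^2$-bound to the Sobolev norm required by Theorem~\ref{T6}(ii). Since $u(t)\in S_2^{2}(\kappa')$ for a possibly smaller $\kappa'\in(0,\kappa)$ and $H^2(I)\hookrightarrow W_q^2(I)$ holds only for $q=2$, I interpolate between this $H^2$-bound and the finite-time existence space $H^{2+2\alpha}$ to remain in the range where Proposition~\ref{P1}(ii) applies, obtaining a uniform $H^\sigma(I)$-bound on $g_\ve(u(t))$ for some $\sigma\in[0,1/2)$. Plugging this bound into the Duhamel representation of the solution to the hyperbolic Cauchy problem \eqref{CPhyp} (or directly into the parabolic formulation when $\gamma=0$) and using the exponential decay of the semigroup $e^{-t\mathbb{A}_\alpha}$ gives a uniform bound on $\big(u(t),\gamma\partial_t u(t)\big)$ in $\mathbb{H}_\alpha$ throughout $[0,T]\cap[0,T_m)$. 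The blowup alternative then forces $T_m\ge T$, completing the contrapositive.
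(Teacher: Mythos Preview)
Your argument follows the paper's approach essentially verbatim: the weighted $L_1$-bound via the eigenfunction $\zeta_1$ (using smallness of $\gamma$), the control of $\mathcal{E}_e$ through \eqref{MarcusMiller} plus interpolation/Poincar\'e, and the resulting $H^2$-bound from the energy identity \eqref{rc2b}. Two minor fixes: $\mathcal{E}(0)=-2\lambda$ rather than $0$ (still a constant, so your conclusion is unchanged), and the interpolation with $H^{2+2\alpha}$ in your final step is both unnecessary and ill-posed (you have no uniform bound there)---simply apply Proposition~\ref{P1}(ii) directly at $\alpha=2$ to bound $g_\ve(u(t))$ in $H^\sigma(I)$ uniformly on $S_2^2(\kappa')$, then feed this into the Duhamel formula, where boundedness of the group on $[0,T]$ suffices and exponential decay is not needed.
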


\subsection{Global Existence}\label{Sec5.2}

An interesting byproduct of the fixed point argument leading to local existence is that it simultaneously yields global existence  for small voltage values $\lambda$ in any of the cases considered in Theorem~\ref{T6}.

\begin{theorem}[Global Existence \cite{ELW14, LW14a}]\label{T11}
Assume that either $\beta>0$, $\gamma\ge 0$, $\tau\ge 0$ or $\beta=\gamma = 0$, $\tau>0$. There is $\lambda_{ge}(\ve,\gamma)>0$ such that the solution to \eqref{rapu1d}-\eqref{rapicu1d} with maximal interval of existence $[0,T_{m})$ provided by Theorem~\ref{T6} is global for any $\lambda\in (0,\lambda_{ge}(\ve,\gamma))$, that is, $T_m=\infty$. In addition, $u$ is bounded in $H^{2+2\alpha}(I)$ uniformly with respect to time, where  $\alpha\in (0,1/4)$, and 
$$
\inf_{(t,x)\in [0,\infty)\times I} u(t,x)>-1\ .
$$
Finally, any cluster point in $H^{2+\alpha}(I)$ of $\{ u(t)\ ;\ t\ge 0\}$ as $t\to\infty$ is a stationary solution to \eqref{rapu1d}-\eqref{rapicu1d}.
\end{theorem}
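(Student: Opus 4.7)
\medskip

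\noindent\textbf{Proof proposal.} The plan is to recast \eqref{rapu1d}-\eqref{rapicu1d} as a fixed point problem via Duhamel's formula and exploit the exponential decay of the underlying semigroup/group together with the boundedness of the nonlinearity $g_\ve$ provided by Proposition~\ref{P1}. Fix $\alpha\in (0,1/4)$ and $\kappa\in (0,1)$, and choose $R=R(\kappa)>0$ small enough so that every $v\in H_D^{2+2\alpha}(I)$ with $\|v\|_{H^{2+2\alpha}(I)}\le R$ belongs to $S_2^{2+2\alpha}(\kappa)$; this is possible thanks to the continuous embedding $H^{2+2\alpha}(I)\hookrightarrow C(\bar{I})$. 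On this ball, Proposition~\ref{P1}(ii)--(iii) ensures that $g_\ve$ is globally Lipschitz continuous and bounded by some constant $M(\kappa)$.

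First I would treat the hyperbolic fourth-order case $\beta>0$, $\gamma>0$. The matrix operator $\mathbb{A}_\alpha$ generates an exponentially stable group $e^{-t\mathbb{A}_\alpha/\gamma}$ on $\mathbb{H}_\alpha$, so there are $C,\omega>0$ with $\|e^{-t\mathbb{A}_\alpha/\gamma}\|_{\mathcal{L}(\mathbb{H}_\alpha)}\le C e^{-\omega t/\gamma}$ for $t\ge 0$. Writing the mild formulation
\begin{equation*}
\mathbf{u}(t)=-\frac{\lambda}{\gamma}\int_0^t e^{-(t-s)\mathbb{A}_\alpha/\gamma}\, f(\mathbf{u}(s))\, \mathrm{d}s\,,
\end{equation*}
one bounds $\|\mathbf{u}(t)\|_{\mathbb{H}_\alpha}\le \lambda CM(\kappa)/\omega$, while the Lipschitz bound gives a contraction estimate of the same flavor. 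Choosing $\lambda_{ge}(\ve,\gamma)>0$ small enough that $\lambda CM(\kappa)/\omega<R$ and $\lambda C L(\kappa)/\omega<1$ (where $L(\kappa)$ is the Lipschitz constant of $f$), the map above is a strict contraction on the closed ball of radius $R$ in $BC([0,\infty),\mathbb{H}_\alpha)$. Its unique fixed point coincides with the solution of Theorem~\ref{T6} on $[0,T_m)$ by uniqueness, thereby forcing $T_m=\infty$ and yielding the uniform $H^{2+2\alpha}(I)$-bound together with $\inf u\ge -1+\kappa$. The parabolic cases ($\gamma=0$ with $\beta>0$ or $\beta=0$, $\tau>0$) are handled along the same lines, replacing the group by the analytic semigroup $e^{-tA_\alpha}$ on $H_D^{2\alpha}(I)$, which is also exponentially stable, and working in $BC([0,\infty),H_D^{2+2\alpha}(I))$. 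The main (mild) obstacle is to coordinate the choice of $\kappa$, $R$ and $\lambda_{ge}$ so that staying in the ball self-consistently keeps $u$ in $S_2^{2+2\alpha}(\kappa)$, where $g_\ve$ enjoys the favorable bounds of Proposition~\ref{P1}.

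For the asymptotic statement, the uniform $H^{2+2\alpha}(I)$-bound combined with the compact embedding $H^{2+2\alpha}(I)\hookrightarrow H^{2+\alpha}(I)$ shows that the orbit $\{u(t):t\ge 0\}$ is relatively compact in $H^{2+\alpha}(I)$. Corollary~\ref{C1}(ii) (or its parabolic counterpart when $\gamma=0$) yields the dissipation identity
\begin{equation*}
\mathcal{E}(u(t))+\frac{\gamma^2}{2}\|\partial_t u(t)\|_2^2+\int_0^t\|\partial_t u(s)\|_2^2\,\mathrm{d}s=\mathcal{E}(0)\,,
\end{equation*}
and since $\mathcal{E}(u(t))$ is bounded from below on the orbit (because $u$ is uniformly separated from $-1$, cf.~\eqref{MarcusMiller}), one gets $\partial_t u\in L_2(0,\infty;L_2(I))$. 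Given any sequence $t_n\to\infty$ such that $u(t_n)\to u_\infty$ in $H^{2+\alpha}(I)$, I would apply this $L_2$-integrability on shifted intervals $[t_n,t_n+1]$ to deduce, after passing to a further subsequence, that $\partial_t u(\cdot+t_n)\to 0$ in $L_2(0,1;L_2(I))$ and $u(\cdot+t_n)\to u_\infty$ in $C([0,1],H^{2+\alpha}(I))$. Passing to the limit in \eqref{rapu1d}, in which the nonlocal term is continuous with respect to $u$ by Proposition~\ref{P1}, then shows that $u_\infty$ satisfies the stationary problem \eqref{u1ddstat}.
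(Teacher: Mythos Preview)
Your proposal is correct and follows essentially the same route as the paper: exploit the exponential decay of the underlying semigroup (resp.\ group) together with the boundedness and Lipschitz continuity of $g_\ve$ on $S_q^2(\kappa)$ from Proposition~\ref{P1} to run the contraction mapping principle globally in time for small $\lambda$, and then use the energy identity \eqref{rc2b} combined with the compact embedding $H^{2+2\alpha}(I)\hookrightarrow H^{2+\alpha}(I)$ to identify cluster points of the orbit as stationary solutions. The only cosmetic adjustment is that in the second-order parabolic case $\beta=\gamma=0$ the ambient space should be $W_{p,D}^2(I)$ for some $p>2$ rather than $H_D^{2+2\alpha}(I)$, since Proposition~\ref{P1}(iii) requires $q>2$.
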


The damping term $\partial_t u$ in \eqref{rapu1d} is crucial for the validity of Theorem~\ref{T11}. Indeed, the proof of the latter mainly relies on the exponential decay of the corresponding underlying semigroup  which entails that the contraction mapping principle used to establish Theorem~\ref{T6} can be applied on any (finite) time interval when $\lambda$ is small (see \cite{LW14a, ELW14} for details). Note that no touchdown occurs in case of Theorem~\ref{T11}, not even in infinite time. 

To show the last statement of Theorem~\ref{T11} observe that the bounds on $u$ stated in Theorem~\ref{T11} and \eqref{MarcusMiller} entail that $\mathcal{E}(u)$ is bounded uniformly with respect to time and that $\{ u(t)\ ;\ t\ge 0\}$ is relatively compact in $H^{2+\alpha}(I)$. Combining the bound on $\mathcal{E}(u)$ with \eqref{rc2b} shows that
$$
\int_0^\infty \|\partial_t u(t)\|_2^2\ \rd t < \infty\ .
$$
The compactness of  the trajectory entails that there is a sequence $t_n\rightarrow\infty$ and $w \in H^{2+\alpha}(I)$ such that $u(t_n)\longrightarrow w$ in $H^{2+\alpha}(I)$. Classical arguments of dynamical systems then guarantee that $(w,\psi_w)$ is a stationary solution to \eqref{rapu1d}-\eqref{rapicu1d}.
A further consequence of the last statement in Theorem~\ref{T11} is that $\lambda_{ge}(\ve,\gamma)\le \lambda_{ns}(\ve)$, the latter being defined in Theorem~\ref{T2}.

\subsection{Finite Time Singularity}\label{Sec5.2.5}

Showing the physically expected behavior that solutions to \eqref{rapu1d}-\eqref{rapicu1d} cannot exist globally in time for large voltage values is more delicate and actually known only in the  second-order case when $\beta=\gamma=0$. A classical technique \cite{Kap63} to prove that solutions to the vanishing aspect ratio model \eqref{uSG} only exist on a finite time interval for $\lambda$ large is to study the time evolution of 
$$
E_0(t) := \int_{-1}^1 \phi_1(x)\, u(t,x)\ \rd x\ , \quad t\in [0,T_{m})\ ,
$$
wher $\phi_1$ is a suitably normalized positive eigenfunction of the Laplace operator with homogeneous Dirichlet boundary conditions, and show that $E_0$ reaches $-1$ in finite time \cite{FMPS06, GPW05} which corresponds to the occurrence of a touchdown, see Section~\ref{Sec5.5} for more precise statements. As described in Section~\ref{Sec4.1}, a similar approach is used in \cite{LW13} for the stationary version of \eqref{rapu1d}-\eqref{rapicu1d} and heavily relies on the convexity of $u$ with respect to~$x$. Unfortunately, such a convexity property is not known for the evolution problem~\eqref{rapu1d}-\eqref{rapicu1d}  (neither it is for \eqref{uSG}), the crucial bound \eqref{convex} thus not being available. However, suitable integral estimates on the trace of the gradient of $\psi_u(\cdot, u)$ on the upper boundary of $\Omega(u)$ relating it to the behavior of $u$ allow one to study the time evolution of the nonlinear variant 
\begin{equation*}
E_\alpha(t) := \int_{-1}^1 \phi_1(x)\, \left( u + \frac{\alpha}{2}\ u^2\right)(t,x)\ \rd x\ , \quad t\in [0,T_{m})\ , 
\end{equation*}
for which one can derive by Jensens's inequality an energy inequality of the form
\begin{equation*}
\frac{\rd }{\rd t}E_\alpha(t)\le c_1+c_2(\lambda)\left(c_3(\lambda,\alpha)-\frac{1}{1+E_\alpha(t)}\right)\ , \quad t\in [0,T_{m})\ , 
\end{equation*}
which has no global solution when $\alpha\in (0,1)$ is suitably chosen and $\lambda$ is sufficiently large.  Summarizing, we obtain the following result:

\begin{theorem}[Finite Time Singularity \cite{ELW14}]\label{T12}
Let $\beta=\gamma=0$. There is $\lambda_{sing}(\ve)>0$ such that, if $\lambda>\lambda_{sing}(\ve)$, then the maximal interval of existence $[0,T_{m})$ of the solution to the second-order parabolic problem \eqref{rapu1d}-\eqref{rapicu1d} provided by Theorem~\ref{T6} is finite, that is, $T_{m}<\infty$. Clearly $\lambda_{sing}(\ve)\ge \lambda_{s}(\ve)$, the latter being defined in Theorem~\ref{T1}.
\end{theorem}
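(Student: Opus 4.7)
\medskip

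\noindent\textbf{Proof plan for Theorem~\ref{T12}.} Since $\beta=\gamma=0$, equation~\eqref{rapu1d} reduces to the scalar semilinear parabolic equation $\partial_t u - \tau\partial_x^2 u = -\lambda g_\varepsilon(u)$, and $g_\varepsilon(u)\ge 0$ by~\eqref{ggg}, so the parabolic comparison principle at once yields $u(t,x)\le 0$ on $[0,T_m)\times I$. The strategy is a nonlinear Kaplan-type eigenfunction method adapted to the nonlocal source. Let $\phi_1(x)=\cos(\pi x/2)$ denote the positive first eigenfunction of $-\partial_x^2$ on $I$ with homogeneous Dirichlet data and eigenvalue $\mu_1=\pi^2/4$, normalized by $\int_{-1}^1\phi_1\,\mathrm{d}x=1$, and introduce, for a parameter $\alpha\in(0,1)$ to be chosen later, the nonlinear averaged functional
\begin{equation*}
E_\alpha(t) := \int_{-1}^1 \phi_1(x)\left( u(t,x) + \frac{\alpha}{2}\, u(t,x)^2 \right)\mathrm{d}x\,,
\end{equation*}
which satisfies $E_\alpha(t)\ge -1+\alpha/2$ as long as $u(t,\cdot)>-1$.

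First I would multiply the equation by the weight $\phi_1(1+\alpha u)$, integrate over $I$, and carry out two integrations by parts using $u(t,\pm 1)=0$, $\phi_1(\pm 1)=0$, and $-\partial_x^2\phi_1=\mu_1\phi_1$; this yields the exact identity
\begin{equation*}
\frac{\mathrm{d}E_\alpha}{\mathrm{d}t} + \tau\mu_1 E_\alpha(t) + \tau\alpha\int_{-1}^1 \phi_1(x)(\partial_x u(t,x))^2\,\mathrm{d}x = -\lambda \int_{-1}^1 \phi_1(x)(1+\alpha u(t,x))\, g_\varepsilon(u(t,x))\,\mathrm{d}x\,.
\end{equation*}
The linear term $\tau\mu_1 E_\alpha$ is bounded and the gradient term on the left is nonnegative, so the task is to extract from the source on the right a sink of order $1/(1+E_\alpha)$.

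To that end I would combine a $\phi_1$-weighted variant of the Cauchy--Schwarz computation leading to~\eqref{MarcusMiller} with Jensen's inequality. Using $g_\varepsilon(u)\ge |\partial_z\psi_u(\cdot,u)|^2$ from~\eqref{ggg} and controlling the trace of $\partial_z\psi_u(\cdot,u)$ on the upper boundary of $\Omega(u)$ in the spirit of~\eqref{b19}, the goal is to establish a bound of the form
\begin{equation*}
\int_{-1}^1 \phi_1(x)(1+\alpha u)\, g_\varepsilon(u)\,\mathrm{d}x \ \ge\ c(\alpha) \int_{-1}^1 \frac{\phi_1(x)}{1+u(t,x)}\,\mathrm{d}x \ -\ C(\varepsilon,\alpha)
\end{equation*}
with $c(\alpha)>0$. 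Since $u\le 0$ implies $\int_{-1}^1 \phi_1 u\,\mathrm{d}x\le E_\alpha(t)$, Jensen's inequality applied to the convex function $s\mapsto(1+s)^{-1}$ with respect to the probability measure $\phi_1(x)\,\mathrm{d}x$ then yields
\begin{equation*}
\int_{-1}^1 \frac{\phi_1(x)}{1+u(t,x)}\,\mathrm{d}x \ \ge\ \frac{1}{1+\int_{-1}^1 \phi_1 u\,\mathrm{d}x} \ \ge\ \frac{1}{1+E_\alpha(t)}\,.
\end{equation*}
Inserting these estimates into the identity above and discarding the nonnegative gradient term produces the announced scalar differential inequality
\begin{equation*}
\frac{\mathrm{d}E_\alpha}{\mathrm{d}t}\ \le\ c_1 + c_2(\lambda)\left(c_3(\lambda,\alpha) - \frac{1}{1+E_\alpha(t)}\right)\,,
\end{equation*}
whose constants can be made favorable by first choosing $\alpha\in(0,1)$ small and then $\lambda$ large enough.

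Finally, for such a choice the right-hand side is strictly negative at $t=0$ and the sink $-1/(1+E_\alpha)$ diverges as $E_\alpha\searrow -1+\alpha/2$; a direct ODE comparison, e.g.\ obtained by multiplying through by $1+E_\alpha$ and using $1+E_\alpha\le 1$, shows that the upper bound on $E_\alpha(t)$ crosses the a priori floor $-1+\alpha/2$ in finite time, which is impossible as long as the PDE solution exists. Hence $T_m<\infty$, and the inequality $\lambda_{sing}(\varepsilon)\ge\lambda_s(\varepsilon)$ is immediate from Theorem~\ref{T1}, whose stationary solutions provide global-in-time dynamic solutions. The main obstacle is clearly the weighted source bound: the stationary convexity argument behind~\eqref{convex}, which gave the pointwise inequality $\partial_z\psi_u(\cdot,u)\ge 1$, is unavailable in the evolutionary setting, and one must replace it by integral trace estimates for $\partial_z\psi_u(\cdot,u)$ derived from the variational characterization of $\psi_u$; the quadratic correction $\alpha u^2/2$ in $E_\alpha$ is tuned precisely to absorb the cross terms these estimates generate and to preserve the $1/(1+E_\alpha)$ singularity under Jensen's inequality.
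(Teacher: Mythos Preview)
Your sketch is correct and follows essentially the same route as the paper: the nonlinear functional $E_\alpha$, the integral trace estimates for $\partial_z\psi_u(\cdot,u)$ replacing the unavailable convexity bound~\eqref{convex}, Jensen's inequality, and the resulting scalar differential inequality are precisely the ingredients outlined in the paper's discussion of Theorem~\ref{T12}. One small inconsistency worth flagging: the gradient term $\tau\alpha\int_{-1}^1\phi_1(\partial_x u)^2\,\mathrm{d}x$ should not simply be \emph{discarded} but is in fact needed to absorb the $(\partial_x u)^2$-contribution that appears when one passes, via a Young-type estimate, from $g_\varepsilon(u)$ to the linear trace $\partial_z\psi_u(\cdot,u)$ (compare the proof of Theorem~\ref{T2}) --- which is exactly what your closing remark about the quadratic correction absorbing cross terms already hints at.
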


The above mentioned integral estimates on which the proof of Theorem~\ref{T12} relies are also at the heart of the study of non-existence of stationary solutions for the fourth-order free boundary problem as already outlined in Section~\ref{Sec4.1}.

\subsection{The Damping Dominated Limit $\gamma\rightarrow 0$}\label{Sec5.3}

When damping effects are assumed to be predominant, inertial effects are often neglected from the outset in the modeling. Most research papers hitherto are dedicated to the corresponding  ``parabolic'' version of \eqref{rapu1d}-\eqref{rapicu1d} or \eqref{uSG} with $\gamma=0$. We now discuss the behavior of the solutions to the ``hyperbolic'' problem \eqref{rapu1d}-\eqref{rapicu1d} when $\gamma>0$ in the damping dominated limit $\gamma\to 0$. The starting point is Theorem~\ref{T6} which provides local solutions to  \eqref{rapu1d}-\eqref{rapicu1d} in the fourth-order case $\beta>0$ likewise for $\gamma>0$ and $\gamma=0$. 

\medskip

Let us emphasize that not only the unique solution $(u,\psi_u)$ to  \eqref{rapu1d}-\eqref{rapicu1d} provided by Theorem~\ref{T6} depends on $\gamma\ge 0$, but also its maximal time of existence $T_{m}$, that is, $(u,\psi_u)=(u_\gamma,\psi_\gamma)$ and $T_{m}=T_{m,\gamma}$. Obviously, considering the time singular limit $\gamma\to 0$ requires a common interval of existence which is independent of $\gamma$, that is, a lower bound $T>0$ on the maximal existence time~$T_{m,\gamma}$. Based on this one can show that $(u_\gamma,\psi_\gamma)$ converges toward $(u_0,\psi_0)$ in a suitable sense as $\gamma\to 0$.

\begin{theorem}[Damping Dominated Limit \cite{LW_Singular}]\label{PLT}
Suppose the assumptions of Theorem~\ref{T6} with $\beta>0$ and let $(u_\gamma,\psi_\gamma)$ be the unique solution to \eqref{rapu1d}-\eqref{rapicu1d} on the maximal interval of existence $[0,T_{m,\gamma})$ provided by Theorem~\ref{T6} for $\gamma\ge 0$. There is $T>0$ such that
\begin{equation}\label{PLs1}
(u_\gamma,\Phi_{u_\gamma})\longrightarrow (u_0,\Phi_{u_0})\ \text{ in } \ C\big([0,T],H^{2}(I)\times H^{2}(\Omega))\big)\ \text{ as $\gamma\longrightarrow  0$\,,}
\end{equation}
 where $\Phi_{u_\gamma}$ is the transformed electrostatic potential introduced in \eqref{PLphi} (with $u$ replaced by $u_\gamma$).
If $\lambda>0$ is sufficiently small, then $T_{m,\gamma}=\infty$ for any $\gamma$ small enough and the statement \eqref{PLs1} is true for each $T>0$.
\end{theorem}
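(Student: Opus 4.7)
The plan is to derive bounds on $(u_\gamma, \partial_t u_\gamma)$ uniform in $\gamma \in [0,\gamma_0]$ for some small $\gamma_0 > 0$, pass to the limit by compactness, and identify the limit using the uniqueness assertion of Theorem~\ref{T6}; continuity of the map $u \mapsto \Phi_u$ implicit in Proposition~\ref{P1} and Section~\ref{Sec3.1} then delivers the companion convergence of the transformed potential.

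First I would establish $T \in (0, T_{m,0})$, $\kappa \in (0,1)$, and $\gamma_0 > 0$ such that $u_\gamma$ remains in $S_2^{2+2\alpha}(\kappa)$ on $[0,T]$ for every $\gamma \in [0,\gamma_0]$, and produce uniform bounds on $(u_\gamma)_\gamma$ in $C([0,T], H_D^{2+2\alpha}(I))$. The starting point is the energy identity \eqref{rc2b} of Corollary~\ref{C1}(ii), which controls $\mathcal{E}(u_\gamma(t))$ and $\int_0^t\|\partial_t u_\gamma(s)\|_2^2\,\mathrm{d}s$ uniformly in $\gamma$ as long as $u_\gamma$ stays separated from $-1$. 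Following the scheme of Section~\ref{Sec5.1.5}, the weighted $L^1$-estimate obtained by testing \eqref{rapu1d} against the eigenfunction $\zeta_1$ of Proposition~\ref{P4} can be pushed through with $\gamma$-uniform constants: the inertial contribution, integrated in time and combined with the well-prepared datum $\gamma \partial_t u_\gamma(0) = 0$ from \eqref{rapicu1d}, produces only a correction of order $\gamma$. Combined with the bound \eqref{MarcusMiller} on $\mathcal{E}_e$, this yields a uniform $H^2$-bound on $u_\gamma$; the bound is then upgraded to $H_D^{2+2\alpha}$ via Proposition~\ref{P1}(ii) and the parabolic Duhamel formula associated with $A_\alpha$ applied to the rewritten equation $\partial_t u_\gamma + A_\alpha u_\gamma = -\lambda g_\ve(u_\gamma) - \gamma^2 \partial_t^2 u_\gamma$. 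A standard continuation argument closes the bounds on a $\gamma$-independent interval.

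With these bounds, an Aubin--Lions compactness argument extracts a sequence $\gamma_k \to 0$ along which $u_{\gamma_k} \to \bar u$ in $C([0,T], H^2(I))$. Passing to the limit in the PDE is direct except for the inertial term, which vanishes in $\mathcal{D}'((0,T) \times I)$: since $\gamma \partial_t u_\gamma$ is bounded in $L^\infty(0,T; L^2(I))$ by the energy identity, one has $\gamma^2 \partial_t^2 u_\gamma = \gamma \, \partial_t(\gamma \partial_t u_\gamma) \to 0$ distributionally. The continuity of $g_\ve$ in Proposition~\ref{P1}(ii) identifies $\bar u$ as a solution of the parabolic limit problem, uniqueness in Theorem~\ref{T6} forces $\bar u = u_0$, and convergence holds for the whole family; the companion convergence of $\Phi_{u_\gamma}$ in $C([0,T], H^2(\Omega))$ follows from the continuous dependence of the transformed potential on the deflection recalled in Section~\ref{Sec3.1}. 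For small $\lambda$, Theorem~\ref{T11} applied with $\gamma = 0$ provides $u_0$ globally in time together with a uniform-in-time separation from $-1$; iterating the previous step on successive intervals $[kT, (k+1)T]$ then delivers $T_{m,\gamma} = \infty$ for all sufficiently small $\gamma$ and convergence on every $[0,T]$.

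The main obstacle is the $\gamma$-uniform control of the inertial term $\gamma^2 \partial_t^2 u_\gamma$, which cannot be bounded pointwise in time but only in weak norms. One must ensure that its contribution to the weighted $L^1$-test and to the parabolic Duhamel representation does not spoil the smallness needed to close the bootstrap on a fixed interval; the zero initial conditions in \eqref{rapicu1d} are essential here, as they rule out the initial layer in $\partial_t u_\gamma$ at $t = 0$ that would otherwise obstruct strong convergence.
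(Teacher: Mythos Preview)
Your overall strategy---uniform bounds, compactness, identification of the limit via uniqueness of the parabolic problem---is sound, and the passage to the limit in your second and third paragraphs is essentially correct. The difficulty lies in the first step, and here your route diverges from the paper's in a way that leaves a genuine gap.

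The paper does \emph{not} rewrite \eqref{rapu1d} as a parabolic equation with forcing $-\gamma^2\partial_t^2 u_\gamma$ and then apply the parabolic Duhamel formula for $A_\alpha$. Instead it works directly with the hyperbolic semigroup $e^{-t\mathbb{A}_\alpha/\gamma}$ on $\mathbb{H}_\alpha = H_D^{2+2\alpha}(I)\times H_D^{2\alpha}(I)$ and invokes the key observation (inspired by Haraux--Zuazua) that the exponential decay rate \emph{and} the prefactor of this damped-wave semigroup are uniform for $\gamma$ near zero. With that uniform decay in hand, the very contraction argument that underlies Theorem~\ref{T6} can be run in a ball of $\mathbb{H}_\alpha$ and on a time interval that are both independent of $\gamma$; the uniform $H^{2+2\alpha}$ bound on $u_\gamma$ and the $\gamma$-independent lower bound on $T_{m,\gamma}$ fall out simultaneously, and no separate control of $\gamma^2\partial_t^2 u_\gamma$ is ever required. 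The energy functional $\mathcal{E}$ then supplies the compactness, and the identification of the limit is handled via classical parabolic singular perturbation theory.

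Your proposed upgrade to $H_D^{2+2\alpha}$ treats $\gamma^2\partial_t^2 u_\gamma$ as a source in the parabolic Duhamel formula, but the only control available on this term comes either from the equation itself, namely $\gamma^2\partial_t^2 u_\gamma = -\partial_t u_\gamma - A_\alpha u_\gamma - \lambda g_\ve(u_\gamma)$, or from the energy identity, which yields $\partial_t u_\gamma\in L^2(0,T;L^2(I))$ and $\gamma\partial_t u_\gamma\in L^\infty(0,T;L^2(I))$ uniformly in $\gamma$. Neither places $\gamma^2\partial_t^2 u_\gamma$ in a space that $e^{-tA_\alpha}$ can smooth back to $H^{2+2\alpha}$ with $\gamma$-independent constants: an integration by parts in time trades $\partial_s^2 u_\gamma$ for $A_\alpha e^{-(t-s)A_\alpha}\gamma^2\partial_s u_\gamma$, whose singularity $(t-s)^{-1}$ is not integrable against the $L^2_t$ bound you have on $\partial_s u_\gamma$. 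You correctly identify this as the main obstacle in your final paragraph, but you do not resolve it; the paper's hyperbolic-semigroup route is precisely designed to avoid it.
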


Deriving a lower bound for the maximal existence time independent of the parameter $\gamma$ is the main part of the proof of Theorem~\ref{PLT} and relies on an exponential decay of the energy associated with the damped wave equation (an idea inspired by \cite{HaZ88})  which is independent of $\gamma$ near zero. The energy functional $\mathcal{E}$ introduced in Section~\ref{Sec3.2} then again plays an important r\^ole as it provides compactness properties of $(u_\gamma,\Phi_{u_\gamma})$ needed in order to obtain a cluster point, its identification as $(u_0,\Phi_{u_0})$ then being due to parabolic singular perturbation theory outlined e.g. in \cite{Fat85, Kis63}. For further details we refer to~\cite{LW_Singular}.

\subsection{The Vanishing Aspect Ratio Limit $\ve\rightarrow 0$ }\label{Sec5.4}

A natural and interesting question concerns the relation between the vanishing aspect ratio equation \eqref{uSG} and the free boundary problem \eqref{rapu1d}-\eqref{rapicu1d}. In particular, whether the former can be obtained as a rigorous limit from the latter. In Theorem~\ref{B} we have seen that this is indeed the case for the stationary problem. We next show that this is true as well for the evolutionary problem and thus give a sound justification of the vanishing aspect ratio model. 

Let us recall that the solution $(u,\psi_u)$ to \eqref{rapu1d}-\eqref{rapicu1d} on the maximal interval of existence $[0,T_m)$ provided by Theorem~\ref{T6} depends on the value of the aspect ratio $\ve>0$ and we shall thus write $(u_\ve,\psi_\ve)$ and $T_m^\ve$ to indicate this. As in the previous study of the damping dominated limit, the starting and simultaneously most difficult point is the derivation of a lower bound $T_0>0$ on $T_m^\ve$ independent of $\ve>0$. This relies on an estimate on the transformed electrostatic potential $\Phi_{u_\ve}$ introduced in \eqref{PLphi}. Carefully exploiting the structure of equation~\eqref{PLPhi} allows one to control the $H^2(\Omega)$-norm of $\Phi_{u_\ve}(t,\cdot)$ in terms of $\ve>0$ for any $t\in [0,T_m^\ve)$ for which $u_\ve(s)\in S_2^{2+2\alpha}(\kappa)$ for $0\le s\le t$ (in the notation of Section~\ref{Sec5.1}). For such $t\in [0,T_m^\ve)$, one deduces the crucial trace estimate
$$
\|\partial_\eta\Phi_{u_\ve} (t,\cdot,1)\|_{W_2^{1/2}(I)}\le K\,\ve\ ,
$$
where $K>0$ depends on $\kappa\in (0,1)$ but not on $\ve>0$. On the one hand, this entails an $H^{2\alpha}(I)$-estimate on the nonlinearity $g_\ve(u_\ve(t))$ independent of $\ve$ and thus implies that there are $T_0>0$ and $\ve_0>0$ such that $u_\ve(t)$ stays in a set  $S_2^{2+2\alpha}(\kappa/2)$ for $t\in [0,T_0]$, uniformly with respect to $\ve\in (0,\ve_0)$. This in turn yields compactness of $(u_\ve)_{\ve\in (0,\ve_0)}$ and hence a cluster point $u_0$ thereof. On the other hand, the trace estimate on $\partial_\eta\Phi_{u_\ve}$  implies the convergence of the nonlinearity
$$
g_\ve(u_\ve) \longrightarrow \frac{1}{(1+u_0)^2} \ \ \text{ as $\ve\longrightarrow 0$}
$$
which shows that the cluster point $u_0$ is indeed a solution to the vanishing aspect ratio equation \eqref{uSG}. Summarizing, one obtains:

\begin{theorem}[Vanishing Aspect Ratio Limit \cite{ELW14, LW_v4}]
Let either $\beta>0$, $\gamma\ge 0$, $\tau\ge 0$ or $\beta=\gamma = 0$, $\tau>0$. For $\varepsilon>0$ let $(u_\varepsilon,\psi_\varepsilon)$ be the unique solution to \eqref{rapu1d}-\eqref{rapicu1d} on the maximal interval of existence $[0,T_m^\varepsilon)$ as provided by Theorem~\ref{T6}. There is $T_0>0$ such that
\begin{equation}\label{z01} 
u_{\varepsilon}\longrightarrow u_0\quad \text{in}\quad C\big([0,T_0],H^{1}(I)\big)
\end{equation}
and 
\begin{equation}\label{z00} 
\lim_{\varepsilon\to 0} \left[ \int_{\Omega(u_{\varepsilon}(t))} \left| \psi_{u_{\varepsilon}}(t,x,z) - \frac{1+z}{1+u_{\varepsilon}(t,x)} \right|^2 \rd(x,z) + \left\| g_{\varepsilon}(u_{\varepsilon}(t)) - \frac{1}{(1+u_{\varepsilon}(t))^2} \right\|_2 \right]= 0
\end{equation}
for all $t\in [0,T_0]$, where $u_0$ is the solution to the vanishing aspect ratio equation \eqref{uSG}. If $\lambda>0$ is sufficiently small, then $T_{m}^\ve=\infty$ for any $\ve$ small, and the statements \eqref{z01}-\eqref{z00} are true for each $T_0>0$.
\end{theorem}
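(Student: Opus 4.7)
The plan is to mirror the strategy sketched in the paragraph preceding the theorem statement: recast the subproblem for $\psi_\ve$ on the fixed rectangle $\Omega=I\times(0,1)$ via the change of variables \eqref{PLphi}, exploit that the transformed potential $\Phi_{u_\ve}$ vanishes identically in the formal limit $\ve=0$, and feed the resulting $\ve$-small trace estimate back into the evolution equation for $u_\ve$ to close a uniform-in-$\ve$ bound on some time interval $[0,T_0]$.

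\textbf{Step 1 (a common existence window).} Fix $\alpha\in(0,1/4)$ and $\kappa\in(0,1)$ such that the zero initial datum lies safely inside $S_2^{2+2\alpha}(\kappa)$, and introduce the stopping time
\[
T_\ve := \sup\bigl\{t\in[0,T_m^\ve)\,:\, u_\ve(s)\in S_2^{2+2\alpha}(\kappa)\text{ for all }s\in[0,t]\bigr\}.
\]
By Theorem~\ref{T6} and continuity $T_\ve>0$, and on $[0,T_\ve]$ the operator $-\mathcal{L}_{u_\ve(t)}$ appearing in \eqref{PLPhi} is uniformly elliptic with coefficients controlled by $\kappa$ alone.

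\textbf{Step 2 (the $\ve$-small trace estimate, the main obstacle).} Inspecting the derivation of \eqref{PLPhi}, one verifies that every summand of the right-hand side $f_{u_\ve}$ carries an explicit $\ve$-factor, consistently with the identity $\Phi_{u_\ve}\equiv 0$ at $\ve=0$. Combined with the homogeneous Dirichlet condition on the whole of $\partial\Omega$ (which uses crucially $u_\ve(\pm1)=0$ so that $\Phi_{u_\ve}$ vanishes on the lateral sides of the rectangle) and a careful $H^2$-estimate for $\mathcal{L}_{u_\ve}$ on the corner domain $\Omega$, this produces a control of $\|\Phi_{u_\ve}(t)\|_{H^2(\Omega)}$ which vanishes with $\ve$. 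A trace theorem on $\{\eta=1\}$ together with interpolation between $H^1$ and $H^2$ then yields the crucial bound
\[
\|\partial_\eta\Phi_{u_\ve}(t,\cdot,1)\|_{W_2^{1/2}(I)} \le K(\kappa)\,\ve,\qquad t\in[0,T_\ve].
\]

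\textbf{Step 3 (closing the bootstrap).} From \eqref{PLphi} one has
\[
\partial_z\psi_{u_\ve}(t,x,u_\ve(t,x)) = \frac{1+\partial_\eta\Phi_{u_\ve}(t,x,1)}{1+u_\ve(t,x)},
\]
so \eqref{ggg} gives
\[
g_\ve(u_\ve(t)) = \frac{\bigl(1+\partial_\eta\Phi_{u_\ve}(t,\cdot,1)\bigr)^2\bigl(1+\ve^2|\partial_x u_\ve(t)|^2\bigr)}{\bigl(1+u_\ve(t)\bigr)^2},
\]
which, thanks to Step~2 and the $H^{2+2\alpha}$-control of $u_\ve$, is bounded in $H^{2\alpha}(I)$ uniformly in $\ve$ and in $t\in[0,T_\ve]$. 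Treating \eqref{rapu1d}-\eqref{rapicu1d} as the abstract Cauchy problem~\eqref{CPhyp} (or its parabolic counterpart when $\gamma=0$) and invoking the semigroup estimates of Section~\ref{Sec5.1}, one deduces a uniform-in-$\ve$ $H^{2+2\alpha}(I)$-bound for $u_\ve(t)$ on $[0,T_\ve]$. A standard continuity argument then produces $T_0>0$, independent of $\ve$ small, such that $u_\ve(s)\in S_2^{2+2\alpha}(\kappa/2)$ for every $s\in[0,T_0]$; in particular $T_\ve\ge T_0$.

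\textbf{Step 4 (compactness and identification of the limit).} The uniform $H^{2+2\alpha}$-bound, combined with the equation itself, yields a uniform bound on $\partial_t u_\ve$ (and on $\gamma^2\partial_t^2 u_\ve$ when $\gamma>0$). Aubin--Lions then gives a cluster point $u_0\in C([0,T_0],H^{2+\alpha}(I))$. Writing
\[
g_\ve(u_\ve) - \frac{1}{(1+u_\ve)^2} = \frac{\bigl(1+\partial_\eta\Phi_{u_\ve}(\cdot,1)\bigr)^2\bigl(1+\ve^2|\partial_x u_\ve|^2\bigr)-1}{(1+u_\ve)^2}
\]
and applying the Step~2 estimate, the left-hand side converges to $0$ in $L_2(I)$ uniformly on $[0,T_0]$, which proves \eqref{z00} and permits passing to the limit in \eqref{rapu1d}. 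The cluster point $u_0$ is thus a solution to \eqref{uSG} with zero initial data; uniqueness of that Cauchy problem (available in the regularity class at hand) upgrades subsequential to full convergence, establishing \eqref{z01}.

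\textbf{Step 5 (global regime for small $\lambda$).} When $\lambda$ is small, Theorem~\ref{T11} delivers $T_m^\ve=\infty$ together with a time-uniform $H^{2+2\alpha}$-bound on $u_\ve$ and a uniform separation from $-1$, all independent of $\ve$ small. Steps~2--4 then run on any prescribed $[0,T_0]$.

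The main obstacle is Step~2: one must track how $\ve$ enters every coefficient of $\mathcal{L}_{u_\ve}$ and every summand of $f_{u_\ve}$, \emph{and} ensure that the corners of the rectangle $\Omega$ do not destroy the $\ve$-scaling in the $H^2$-elliptic estimate; the lateral boundary condition for $\Phi_{u_\ve}$ inherited from $u_\ve(\pm1)=0$ is precisely the structural feature that rescues this scaling at the corners. A secondary subtlety is that, in the second-order case $\beta=\gamma=0$, the nonlinearity $g_\ve$ sits slightly above half the order of the operator, so the semigroup bootstrap in Step~3 is more delicate and the resulting topology for \eqref{z01} is only $C([0,T_0],H^1(I))$, consistently with the statement.
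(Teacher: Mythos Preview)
Your proposal is correct and follows essentially the same route as the paper's sketch in Section~\ref{Sec5.4}: the stopping-time argument to secure a common interval, the $O(\varepsilon)$ control of $\partial_\eta\Phi_{u_\varepsilon}(\cdot,1)$ in $W_2^{1/2}(I)$ via the structure of \eqref{PLPhi}, the resulting $\varepsilon$-uniform bound on $g_\varepsilon(u_\varepsilon)$ in $H^{2\alpha}(I)$ closing the bootstrap in $S_2^{2+2\alpha}(\kappa/2)$, and the compactness/identification step are exactly the ingredients the paper outlines. Your explicit formula relating $g_\varepsilon(u_\varepsilon)$ to $\partial_\eta\Phi_{u_\varepsilon}(\cdot,1)$ in Step~3 and the uniqueness argument upgrading subsequential to full convergence in Step~4 are useful amplifications of details the paper leaves implicit.

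One refinement worth noting for Step~2: the operator $\mathcal{L}_{u_\varepsilon}$ is \emph{not} uniformly elliptic as $\varepsilon\to 0$ (it degenerates in the $x$-direction), so the phrase ``a careful $H^2$-estimate for $\mathcal{L}_{u_\varepsilon}$'' cannot mean a standard elliptic regularity bound with $\varepsilon$-independent constants. The paper's wording ``control the $H^2(\Omega)$-norm \ldots\ in terms of $\varepsilon$'' signals precisely this: the argument exploits that $\partial_\eta^2$ remains uniformly present in $\mathcal{L}_{u_\varepsilon}$, that $f_{u_\varepsilon}$ carries $\varepsilon$-factors, and that the trace one ultimately needs involves only $\partial_\eta$, so the anisotropic degeneration in $x$ does not spoil the $W_2^{1/2}(I)$-bound on $\partial_\eta\Phi_{u_\varepsilon}(\cdot,1)$. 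You correctly identify this as the main obstacle; just be aware that the mechanism is anisotropic rather than a global $H^2$-inversion.
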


The norm convergence stated in \eqref{z01} can be improved, in particular in the case $\beta>0$ (and also when $\gamma>0$).

\subsection{Dynamics in the Vanishing Aspect Ratio Model}\label{Sec5.5}

As for the stationary vanishing aspect ratio model, we consider a more general geometry and let $D$ be a bounded domain of $\R^d$, $d\ge 1$, with sufficiently smooth boundary. The dynamics of the vanishing aspect ratio model 
\begin{equation}
\begin{split}
\gamma^2 \partial_t^2 u + \partial_t u + \beta \Delta^2 u - \tau \Delta u & = - \frac{\lambda}{(1+u)^2}\ , \quad t>0\ , \quad x\in D\ , \\
u &= \beta \partial_\nu u  = 0 \ , \quad t>0\ , \quad x\in \partial D\ , \\
u(0,x) = \gamma \partial_t u(0,x) &= (0,0)\ , \quad x\in D\ ,
\end{split} \label{Mozart}
\end{equation}
is better understood, not only because it is simpler than the free boundary problem \eqref{rapu1d}-\eqref{rapicu1d} but also as it is a model semilinear evolution equation with a singular reaction term. The study of the latter was actually motivated by the possible occurrence of the quenching phenomenon, which corresponds to a finite time blowup of the  reaction term as the solution attains singular values of the nonlinearity, see \cite{FHQ92, Lev89, Lev92} for instance. In fact, several techniques developed for that purpose have been subsequently applied to \eqref{Mozart} and its variants. Obviously, the behavior of solutions to \eqref{Mozart} depends upon whether or not the parameters $\beta$ and $\gamma$ are positive. Before briefly reviewing the available results it is worth pointing out that, in contrast to the free boundary problem \eqref{rapu1d}-\eqref{rapicu1d}, local existence in a suitable functional setting of a solution to \eqref{Mozart} up to a maximal existence time $T_m>0$ readily follows from the properties of the linear part of the equation by a classical fixed point argument. In particular, given $T>0$, one has
\begin{equation}
T_m\ge T \quad\text{ whenever }\quad \inf_{(t,x)\in ([0,T]\cap [0,T_m)) \times D}\{u(t,x)\} > -1\ , \label{Buxtehude}
\end{equation}
which provides an explicit criterion for global existence.

Let us now be more specific and begin with the parabolic second-order case 
\begin{equation}
\begin{split}
\partial_t u - \tau \Delta u & = - \frac{\lambda}{(1+u)^2}\ , \quad t>0\ , \quad x\in D\ , \\
u & = 0 \ , \quad t>0\ , \quad x\in \partial D\ , \\
u(0,x) & = 0\ , \quad x\in D\ ,
\end{split} \label{Pachelbel}
\end{equation}
corresponding to $\gamma=\beta=0$ in \eqref{Mozart}. In that case, the comparison principle represents a powerful tool. In particular, recalling that $\lambda_0^{stat}$ is the threshold value of the parameter $\lambda$ above which no stationary solution to \eqref{Pachelbel} exists and below which there is a stable stationary solution $U_\lambda$, see Section~\ref{Sec4.2}, we have the following results: if $\lambda<\lambda_0^{stat}$, then the negativity of $U_\lambda$ and the comparison principle entail that $U_\lambda\le u(t)\le 0$ for all $t\in [0,T_m)$. Since $\min\{U_\lambda\}>-1$, we readily deduce from \eqref{Buxtehude} that $T_m=\infty$, see \cite{FMPS06, GhG08b}. This result actually implies that the pull-in voltages defined in Section~\ref{Sec1.3} satisfy $\lambda_0^{stat} \le \lambda_{0,0}^{dyn}$. Next, when $\lambda>\lambda_0^{stat}$, touchdown necessarily occurs and $T_m<\infty$ as shown in \cite{FMPS06, GhG08b, GPW05} with the help of a technique originally introduced in \cite{Lac83}. This entails that $\lambda_{0,0}^{dyn} \le \lambda_0^{stat}$ and thus that they are equal as announced in \eqref{lambda*}. The case $\lambda=\lambda_0^{stat}$ is also studied in \cite{EGG10, Guo08b} while additional information on the behavior near the touchdown time is provided in \cite{FMPS06, Guo08a, GPW05}. In particular, if $D=\mathbb{B}_1$ is the unit ball of $\R^d$ and $T_m<\infty$, then touchdown necessarily occurs at $x=0$ and only there. 

\medskip

As soon as either $\beta$ or $\gamma$ is not equal to zero, the comparison principle is no longer available for the evolution problem. This leads to a dramatic change in the analysis. Still, taking advantage of the existence of the branch of stable stationary solutions to \eqref{Mozart} described in Section~\ref{Sec4.2} when either $\beta=0$ or $\beta>0$ and $D=\mathbb{B}_1$, a technique developed in \cite{Lac83} allows one to prove that touchdown occurs as soon as $\lambda>\lambda_0^{stat}$, see \cite{KLNT11, KLNT15, LLZ14} when $\beta=0$ and \cite{LW14b} when $\beta>0$ and $D=\mathbb{B}_1$. Consequently $\lambda_{0,\gamma}^{dyn} \le \lambda_0^{stat}$, an inequality which is compatible with \eqref{lambda**} and  valid for all $\gamma\ge 0$. As for global existence it is shown in the aforementioned references that global solutions exist provided $\lambda$ is sufficiently small, without any restriction on the domain $D$. But no precise quantitative bound on the threshold value of $\lambda$ is known. We also refer to \cite{LiL12, LLS13} for numerical simulations and detailed descriptions of the touchdown behavior by asymptotic methods for \eqref{Mozart} when $\beta>0$ and $\gamma=0$. In particular, there is numerical evidence that, if $d=1$, touchdown takes place simultaneously at two different points for $\lambda$ large enough, a situation which contrasts markedly with the second-order case where such a behavior is only observed with a non-constant permittivity profile \cite{Guo08a}. Similar global existence results for small values of $\lambda$ are true for the vanishing aspect ratio model with $\beta>0$ and $\gamma>0$ when the clamped boundary conditions are replaced by pinned boundary conditions $u=\beta\Delta u = 0$ on $\partial D$\cite{Guo10}. In fact, considering pinned boundary conditions does not help much  for the evolutionary problem as far as the question of global existence is concerned since the comparison principle is not available in both cases, in contrast to the stationary setting described in Section~\ref{Sec4.2}. Finally, the behavior as $\gamma\to 0$ is analyzed in \cite{LiL16, LLZ14} when $\beta=0$.

To end this overview of results pertaining to the time-dependent vanishing aspect ratio model for MEMS we shortly list some contributions investigating stationary and time-dependent extensions of \eqref{Mozart}, some of them having been introduced in Section~\ref{Sec2.4}: 
\begin{itemize}
\item {\it Non-constant permittivity profiles}: It is already observed in \cite{GPW05, Pel01a} that a non-constant permittivity profile (corresponding to $\chi=0$ and a non-constant $f$ in \eqref{sccs}) has a strong influence on the threshold value of $\lambda$ as well as on the touchdown phenomenon. Contributions to this issue may be found in \cite{GhG06, GhG08a, GhG08b, GuS15, Guo08a, Guo08b, EGG10, KMS08, Wan15} for the second-order case ($\beta=0$, $\tau>0$) and in \cite{COG09} for the fourth-order equation ($\beta>0$).
\item {\it Capacitance}: The resulting equations are the object of several research papers and we refer to \cite{LiZ14, LiZ15, LiY07, GHW09, GuH14, GuK12, Hui11a, Hui11b, KLNT11, KLN16, PeC03, PeT01, Miy15} when $f\equiv 1$ and $\chi>0$ in \eqref{sccs}. The case of a non-constant permittivity profile $f$ with $\chi>0$ is investigated in \cite{CFT11, CFT14}.
\item {\it Fringing fields}: The influence of fringing fields \eqref{frfi} has also attracted attention recently and has been analyzed in particular in \cite{PeD05, DaW12, LiW12, LuYxx, WeY10, Wan13, FuW15, PaX15}.
\item {\it Van der Waals} or {\it Casimir forces}: The inclusion of \eqref{vdWCa} is studied in \cite{Lai15}.
\item When $\beta=0$ nonlinear effects due to large deformations have also been accounted for in the vanishing aspect ratio model \eqref{uSG} by replacing the Laplace operator $\Delta u$ by the mean curvature operator $\mathrm{div}\left( \nabla u / \sqrt{1+\varepsilon^2|\nabla u|^2} \right)$. So far only the stationary problem has been studied, the outcome being that the nonlinear diffusion alters significantly the structure of stationary solutions \cite{BrP11, BrP12, BrL13, CHW13, PaX12, PaX15}.
\item Control and inverse problems for MEMS using the vanishing aspect ratio model \eqref{uSG} are investigated in \cite{CKL09, ClK14}.
\item Finally, the extension to $(1+u)^{-p}$ for $p>1$ of the above mentioned results has been studied and shown to depend strongly upon both $p$ and $d$, see \cite{Hui09, GLY14, CEG10, CoG10, CHW13, PaX15, ZhL12, WeY10, Miy15, DWW16} and the references therein. 
\end{itemize}

\section{The Three-Dimensional Problem}\label{S6}

The results presented in Section~\ref{S4} and Section~\ref{S5} concern the free boundary problem  \eqref{psi1d}-\eqref{icu1d} with a two-dimensional region $\Omega(u)$. As noted in Section~\ref{Sec2.2} this is a special case of \eqref{psi}-\eqref{icu}
when $D$ is assumed to be a rectangle and  zero variation in the $y$-direction is presupposed. Of course, a more general (two-dimensional) geometry of $D$ is of great interest with respect to real world applications. But with a general two-dimensional domain $D$, the resulting $\Omega(u)$ is a three-dimensional Lipschitz domain and it is not clear that  the trace of $\nabla\psi$ is well-defined on the boundary $\partial\Omega(u)$ thereof. Nevertheless, if $D$  is assumed to be a bounded and {\it convex} domain in  $\R^2$ with a sufficiently smooth boundary, a similar, but technically advanced argumentation as outlined in Section~\ref{Sec3.1} (see \cite{LW_3d} for details) entails that solutions to the elliptic problem \eqref{psi}-\eqref{bcpsi} are sufficiently smooth so that  the trace of $\nabla\psi$ on $\partial\Omega(u)$ has a perfect meaning. However, the regularity of the square of the gradient trace of such solutions is weaker than before and the two-dimensional analogue of the function $g_\ve$ from Proposition~\ref{P1} is less regular than stated therein in the sense that it maps $S_q^2(\kappa)$ with $q>3$ only into $L_2(D)$ (instead of some Sobolev space with positive order). As a consequence, the results presently known for the three-dimensional problem are restricted to the fourth-order case $\beta>0$. 

So, let $\beta>0$, $\gamma=0$, and consider a bounded and convex domain $D$  in  $\R^2$ with a smooth boundary. Then one can prove a local existence result for the dynamic problem \eqref{psi}-\eqref{icu} with arbitrary $\lambda>0$ as in Theorem~\ref{T6}. Also Theorem~\ref{T11} on global existence of solutions for small $\lambda$ values is still valid. As for stationary solutions, the same statement on existence of such solutions for small $\lambda$ as in Theorem~\ref{T1} is true except for the negativity of $U_\lambda$. Nevertheless, this last property holds true  if $D$ is a disc so that the maximum principle for radially symmetric functions is available according to Proposition~\ref{P4}. In this case, the stationary solutions provided by the analogue of Theorem~\ref{T1} are radially symmetric and the non-existence result of Theorem~\ref{T2} is valid as well. For details on all this we refer to \cite{LW_3d}.

 Let us finally mention that the implicit function theorem is also used in \cite{Cim07} to study stationary solutions in the second-order case $\beta=0$ for small values of $\lambda$.

\section{Summary and Open Problems}\label{S7}

In the previous sections we reviewed the derivation of the free boundary problem \eqref{psi}-\eqref{icu} as well as the mathematical results obtained so far for its two-dimensional version \eqref{psi1d}-\eqref{icu1d}. We have in particular shown that it is locally well-posed when either $\beta>0$ and $\gamma\ge 0$ or $\beta=\gamma=0$ and $\tau>0$, having also global solutions and stable stationary solutions for small values of $\lambda$ in both cases (Theorem~\ref{T6}, Theorem~\ref{T11}, and Theorem~\ref{T1}). These results remain true for the nonlinear elasticity model \eqref{uquasilin}, see \cite{ELW14, LW14c}, as well as when self-stretching forces are taken into account in addition to bending, that is, $\beta>0$ and the second-order contribution $-\tau\partial_x^2u$ in \eqref{u1d} is replaced by
\begin{equation}
- \big(\tau+a\|\partial_x u\|_2^2\big)\partial_x^2 u\ , \qquad a>0\ . \label{self}
\end{equation}
The local and global existence results also extend to initial conditions which are not identically zero under suitable smoothness assumptions, possibly supplemented with smallness conditions  for global existence. We have also proved that there is no stationary solution for large values of $\lambda$ (Theorem~\ref{T2}) while, when $\gamma=\beta=0$, a finite time singularity occurs for the evolution problem (Theorem~\ref{T12}). This also holds true for the nonlinear elasticity model \eqref{uquasilin} with the same range of parameters $\gamma=\beta=0$ \cite{ELW13, ELW14} and extends as well to a broader class of initial data. We summarize the results obtained so far for \eqref{psi1d}-\eqref{icu1d} and its stationary counterpart in the table below, assuming that $\tau>0$.

\bigskip

\begin{tabular}{|c|p{2.5cm}|p{3cm}|p{2.5cm}|p{2.5cm}|p{2.5cm}|}
\hline
\eqref{psi1d}-\eqref{icu1d} & \multicolumn{3}{c|}{Evolution Problem} & \multicolumn{2}{c|}{Steady States} \\
 \hline
 & local existence & global existence for small $\lambda$ & finite time singularity & non-existence for large $\lambda$ & multiplicity for small $\lambda$\\
\hline \hline
$\gamma=\beta=0$ & true & true & true & true & unknown \\
\hline
$\gamma>0$, $\beta=0$ & unknown & unknown & unknown & true & unknown \\
\hline
$\gamma=0$, $\beta>0$ & true & true & unknown & true & true \\
\hline
$\gamma>0$, $\beta>0$ & true & true & unknown & true & true \\
\hline
\end{tabular}

\bigskip

Though the above table contains quite a number of affirmative answers to the basic questions we have investigated recently, a few of them are, however, still not answered in a satisfactory or complete way. Below we list some open problems which we believe to be of importance for a better understanding of the free boundary model \eqref{psi1d}-\eqref{icu1d}:
\begin{enumerate}
\item The existence of the threshold value $\lambda_\varepsilon^{stat}$ is yet unknown. 
\item By Theorem~\ref{T3} there are countably many values of $\lambda$ in a neighborhood of zero for which there are at least two stationary solutions to \eqref{psi1d}-\eqref{icu1d} when $\beta>0$. That this is true for all values of $\lambda$ in a neighborhood of zero would be a first step towards a finer description of the structure of stationary solutions. We actually conjecture that the bifurcation diagram in that case is similar to the one of the vanishing aspect ratio model stated in Theorem~\ref{TSSIntroduction}. No clue towards a proof of a similar result when $\beta=0$ is currently available.
\item According to Theorem~\ref{T12} a finite time singularity occurs for all solutions to the evolutionary problem \eqref{psi1d}-\eqref{icu1d} when $\beta=\gamma=0$, $\tau>0$, and $\lambda$ is large enough. It is, however, unknown whether touchdown (in the sense of \eqref{PLtouchdown}) indeed takes place.
\item When $\beta>0$, it is yet unproved that a finite time singularity may occur for $\lambda$ large enough.
\item If touchdown occurs at a finite time $T_m$, nothing is known yet about the structure of the touchdown set $\mathcal{S} := \{ x \in (-1,1)\ :\ u(T_m,x)=-1\}$ and the spatio-temporal scales of the evolution of $(u,\psi_u)$ as $(t,x)$ approaches $(T_m,x_0)$ for $x_0\in\mathcal{S}$. 
\item When $\beta>0$, self-stretching forces corresponding to \eqref{self} received little attention in the literature, even for the vanishing aspect ratio model, and their influence is far from being well understood.
\end{enumerate}

%
\bibliographystyle{siam} 
\bibliography{MEMS}

\end{document}